\documentclass{birkjour}
 \usepackage{mathabx}
 \usepackage{hyperref}
 \usepackage[dvipsnames]{xcolor}
 \usepackage{xcolor}
 \usepackage{mathrsfs}
 \newcommand{\nc}{\mathbb{N}}

\usepackage{hhline}
\newtheorem{theorem}{Theorem}[section]
\newtheorem{defin}[theorem]{Definition}
\newtheorem{lemma}[theorem]{Lemma}

\newtheorem{observation}[theorem]{Observation}

\newtheorem{rem}{Remark}[section]
\newtheorem{pro}{Proposition}[section]

\newenvironment{defn*}{\begin{definition}}{\end{definition}}

 \numberwithin{equation}{section}

\begin{document}

%
%
%
%
%
%
%
%
%

\title[Tame Factorization Property I]
 {Tame Factorization Property I}


\author{Buket Can Bahadır}
\email{canbuket@gmail.com}
\author{Nazl\i\;Do\u{g}an}
\address{Fatih Sultan Mehmet Vakif University, 
    34445 Istanbul, Turkey}
\email{ndogan@fsm.edu.tr}

\subjclass{46A04, 46A45, 46A61}

\keywords{Fréchet Spaces, Köthe Spaces, Tame Operators.}

\dedicatory{Dedicated to the memory of Tosun Terzio\u{g}lu \\ on the 10th anniversary of his passing.}

\begin{abstract}
We introduce the \emph{tame factorization property} for triples of Fr\'echet spaces: a triple $(E,G,F)$ has the tame factorization  property, denoted by $(E,G,F) \in \mathfrak{TF}$, if there exists a nondecreasing function $S:\mathbb{N}\to\mathbb{N}$ such that every operator from $E$ to $F$ factoring through $G$ is $S$-tame. We give a complete characterization of $\mathfrak{TF}$ in terms of operator seminorm estimates. We specialize this characterization to triples in which one or more of the spaces are K\"othe spaces, putting explicit conditions on the K\"othe matrices, and we describe how $\mathfrak{TF}$ behaves under projective tensor products of K\"othe spaces. Finally, we show that a triple of K\"othe spaces has the tame factorization property if and only if the family of operators that factor as products of two quasi-diagonal operators is tame.
\end{abstract}

\maketitle

\section{Introduction}

Tameness provides a way to control and predict the behavior of operators defined between Fr\'echet spaces: operators between tame space pairs are subject to a uniform estimate, instead of growing arbitrarily. This regularity is useful both when working directly with linear operators and when studying the structure of Fr\'echet spaces.

 The notion goes back to Hamilton's 1982 work on linearly tame spaces in the context of the Nash--Moser inverse function theorem \cite{hamil}, and was given its general definition for pairs of Fr\'echet spaces by Dubinsky and Vogt \cite{vogt1}, who characterized tame power series spaces of infinite type. Nyberg \cite{nyberg} classified tame pairs of power series spaces, and later Piszczek \cite{pisek2} characterized tame pairs $(X,Y)$ in which one of the spaces is a power series space, in terms of topological invariants.  Aytuna \cite{A} showed that a nuclear Fr\'echet space $E$ with properties $\underline{DN}$ and $\Omega$, and a stable finitely nuclear associated exponent sequence, is isomorphic to a power series space of finite type if and only if $E$ is tame and the approximate diametral dimensions of $E$ and the corresponding power series space coincide, and used this result to characterize the tameness of spaces of analytic functions on Stein manifolds. Recently, Bahad\i r \cite{can} gave a complete classification of tame pairs of power series spaces and also showed that tameness of a pair of K\"othe spaces $(\lambda(A),\lambda(B))$ is determined entirely by the tameness of the quasi-diagonal operators between them.

 The study of tameness was preceded by a considerably older question in the theory of locally convex spaces: that of boundedness, namely the phenomenon whereby every continuous linear operator between two locally convex spaces is necessarily bounded. This phenomenon has been studied extensively, some relevant references are \cite{A1, B1, B2, djakov1, djakov2, OT1, nurlu, nurlu2, vogt2}. Vogt \cite{vogt2} gave a complete characterization of the pairs $(E,F)$ of Fr\'echet spaces, denoted by $(E,F)\in \mathfrak{B}$, for which every continuous linear operator from $E$ to $F$ is bounded. 
 
 The boundedness relation was subsequently extended to a factorized setting. Zahariuta's isomorphism results for Cartesian products of K\"othe spaces \cite{Z1,Z2} led Djakov, Terzio\u{g}lu, Yurdakul, and Zahariuta \cite{djakov1} to isolate a factorized counterpart of $\mathfrak{B}$, the \emph{bounded factorization property}: a triple $(E,G,F)$ has property bounded factorization property, denoted by $(E,G,F)\in \mathfrak{BF}$, if every operator from $E$ to $F$ that factors through $G$ is bounded. This relation was used to obtain new isomorphic classifications of Cartesian products of K\"othe and power series spaces, and, for tensor products of K\"othe spaces, was shown to follow from boundedness of the corresponding pairs \cite{djakov1}. Terzio\u{g}lu and Zahariuta \cite{terzi2} subsequently gave a complete characterization of bounded factorization property for arbitrary Fr\'echet spaces, via a bilinear version of Grothendieck's factorization theorem. Terzio\u{g}lu, Yurdakul, and Zahariuta \cite{terzi1} also showed that a triple of K\"othe spaces fails to have bounded factorization property if and only if there is an unbounded operator factoring, as a product of two quasi-diagonal operators, through the middle space.

The present paper combines these two ideas by introducing the \emph{tame factorization property}. We define the tame factorization property as: a triple $(E,G,F)$ has the tame factorization property, denoted by $(E,G,F)\in \mathfrak{TF}$, if every operator from $E$ to $F$ factoring through $G$ is tame.

Our main result, Theorem~4.2, characterizes the tame factorization property for arbitrary Fr\'echet spaces via the same bilinear factorization technique used by Terzio\u{g}lu and Zahariuta for the bounded factorization property in \cite{terzi2}. When $E=G$ or $G=F$, this recovers Piszczek's characterization of tame pairs \cite{pisek2}. We then specialize this characterization to the case where one or more of $E,G,F$ are K\"othe spaces, Propositions~4.2--4.5, Theorem~4.4, giving explicit matrix-theoretic conditions, and describe the behavior of the tame factorization property under projective tensor products of K\"othe spaces, Theorems~4.5--4.6. Finally,  we show that a triple of K\"othe spaces has the tame factorization property if and only if the family of operators that factor as products of two quasi-diagonal operators is tame  by Theorem~5.3.

We also mention that a companion paper, \emph{Tame Factorization Property II} \cite{CD}, is devoted to a further aspect of this theory. Vogt \cite{vogt2} showed that, when one of the spaces $E$ or $F$ is a power series space, the relation $(E,F) \in \mathfrak{B}$ can be characterized in terms of DN-$\Omega$ type topological invariants of the other space. Piszczek \cite{pisek2} later established an analogous characterization for tame pairs: when one of $E$ or $F$ is a power series space, the tameness of $(E,F)$ can likewise be expressed in terms of DN-$\Omega$ type topological invariants of the other space.  Motivated by these results, in \cite{CD} we consider triples $(E,G,F) \in \mathfrak{TF}$ in which one of the spaces is a power series space and another is a nuclear Fr\'echet space with properties $\underline{DN}$ and $\Omega$, and under a suitable diametral or approximate diametral dimension condition, we investigate the conditions that the third space must satisfy for the triple to have the tame factorization property under some diametral/approximate diametral dimension condition.

\section{Preliminaries}
\label{chp:preliminaries}

In this section, we introduce the basic definitions, terminology, and notation.  We refer to \cite{vogt4} for complementary background, additional details, and further discussion of related concepts.
\subsection{Fréchet Spaces}

A complete Hausdorff locally convex space E whose topology defined by countable fundamental
system of seminorms $(\|\cdot\|_{k})_{k\in \mathbb{N}}$ is called a \textit{Fr\'echet space}. A \textit{grading} on a Fr\'echet space $E$ is an increasing sequence of seminorms $(\|\cdot\|_{k})_{k\in \mathbb{N}}$ that defines the topology of E. Every Fr\'echet space admits a grading. A Fréchet space together with a chosen grading is called a \textit{graded Fréchet space}.
We refer the reader to \cite{hamil} for further details.
In the sequel, we work exclusively with graded Fr\'echet spaces unless stated otherwise.

Let $E$ be a Fréchet space with defining seminorms $(\| \cdot
\|_k )_{k\in \nc}$. We denote by $E'$ its topological dual, endowed with the dual seminorms ${\|\cdot\|_k^*}$ given by
\[\hspace{1.45in}\|u\|^*_k=\sup_{\|x\|_k\leq 1}|u(x)|,\hspace{1.45in} u\in E'.\]

A matrix $(a_{n,k})_{k,n\in \mathbb{N}}$ of non-negative scalars is called a \textit{Köthe matrix} if $ a_{n,k}\leq a_{n,k+1}$ for all $n,k\in \mathbb{N}$, and 
for every $n\in \mathbb{N}$ there exists a $k\in \mathbb{N}$ such that $a_{n,k}>0$. The \textit{Köthe spaces} considered in this paper are defined as follows, where $\mathbb{K}$ denotes either $\mathbb{R}$ or $\mathbb{C}$:
\begin{align*}
\lambda(A)
&=\Bigl\{ x=(x_{j})_{j\in \nc}\in \mathbb{K}^{\mathbb{N}} :
\|x\|_{k}
=\sum_{j=1}^{\infty} |x_{j}| a_{j,k}
<\infty,\ \forall k\in\mathbb{N} \Bigr\}, \\[0.3em]
\lambda^{\infty}(A)
&=\Bigl\{ x=(x_{j})_{j\in \nc}\in \mathbb{K}^{\mathbb{N}} :
\|x\|_{k}
=\sup_{j\in\mathbb{N}} |x_{j}| a_{j,k}
<\infty,\ \forall k\in\mathbb{N} \Bigr\}, \\[0.3em]
\lambda^{\infty}_{0}(A)
&=\Bigl\{ x\in \lambda^{\infty}(A) :
\lim_{j\to\infty} x_{j} a_{j,k}=0,\ \forall k\in\mathbb{N} \Bigr\}. \\[0.3em]
\end{align*}
Let $\alpha=\left(\alpha_{n}\right)_{n\in \mathbb{N}}$ be a non-negative increasing sequence with $\displaystyle \lim_{n\rightarrow \infty} \alpha_{n}=+\infty$.
\textit{The power series spaces of infinite and finite type} are defined as follows: 
\begin{align*}
\Lambda_{\infty}(\alpha)
&=\Bigl\{ x=(x_{j})_{j\in \nc}\in \mathbb{K}^{\mathbb{N}} :
\|x\|_{k}
=\sum_{j=1}^{\infty} |x_{j}|e^{k\alpha_{j}} 
<\infty,\ \forall k\in \nc \Bigr\},\\
\Lambda_{1}(\alpha)
&=\Bigl\{ x=(x_{j})_{j\in \nc}\in \mathbb{K}^{\mathbb{N}} :
\|x\|_{k}
=\sum_{j=1}^{\infty} |x_{j}|e^{-\frac{1}{k}\alpha_{j}} 
<\infty,\ \forall k\in \nc \Bigr\}.
\end{align*}

\subsection{Operators on Fr\'echet Spaces}

Let $E$ and $F$ be Fr\'echet spaces. A linear operator $T:E\to F$ is \textit{continuous} if for every $k\in \nc$, there exist an $m_{k}\in \nc$ and a $C_{k}>0$ such that
$$\|T x\|_{k}\leq C_{k}\|x\|_{m_{k}}$$
for every $x\in E$. $L(E,F)$ denotes the space of all continuous linear operators from $E$ into $F$. An operator $T\in L(E,F)$ is \textit{bounded} if there exists a neighborhood $U$ of zero in $E$ such that $T(U)$ is a bounded set in $F$. $LB(E,F)$ denotes the space of all bounded linear operators from $E$ into $F$. 

Let $T \in L(E,F)$. We define
\[ \|T\|_{k,r} := \sup_{\|x\|_r\leq 1}\|Tx\|_k=\inf \Bigl\{ C:\|Tx\|_k\leq C\|x\|_r\; \text{for all} \; x\in E\Bigr\}\]
for every $k,r \in \mathbb{N}$. We note that $\|T\|_{k,r}$ need not be finite for every $k,r \in \mathbb{N}$ however, since  $T\in L(E,F)$, there exists a function $\phi:\mathbb{N} \to \mathbb{N}$ such that $\|T\|_{k,\phi(k)} < \infty$ for every $k$.

The \textit{characteristic of  continuity map} of a linear operator $T$ is the map $\pi_{T}:\nc\to\nc$ defined by
$$\pi_T(k)= \inf\lbrace r\in \nc: \sup_{\|x\|_r\leq 1}\|Tx\|_k<\infty \rbrace= \inf\{r \in \nc:\|T\|_{k,r}<\infty\}.$$
We remark that an operator $T$ is bounded if and only if its characteristic of continuity is bounded.

\subsection{Tameness}
In general, no a priori estimate is available for the characteristic of continuity of a continuous linear operator. The notion of tameness provides a way to control the potentially unpredictable behavior of the characteristic, as defined below:

\begin{defin}
Let $E$ and $F$ be Fr\'echet spaces, and $T \in L(E,F)$.   $T$ is called $S$-tame for a nondecreasing function $S:\mathbb{N}\to\mathbb{N}$ if there exists a $k_0 \in \mathbb{N}$ such that
$$\hspace{1.85in}\pi_T(k)\leq S(k),\hspace{1.45in} \forall k\geq k_{0}.$$ We call $T$ \emph{linearly tame} if $S$ is linear.
\end{defin}

Such behavior arises naturally in the context of operators between power series spaces. Vogt \cite{vogt5} established that every continuous linear operator between power series spaces of finite type is linearly tame. 

\begin{defin} Let $E$ and $F$ be Fr\'echet spaces. The pair $(E,F)$ is called tame, denoted by $(E,F)\in \mathfrak{T}$ if there exists a nondecreasing function $S:\mathbb{N}\to \mathbb{N}$ such that for every continuous linear operator $T\in L(E,F)$,  there exists $k_0 \in \mathbb{N}$ with
$$\hspace{1.85in}\pi_T(k)\leq S(k),\hspace{1.45in} \forall k\geq k_{0}.$$
If $E=F$, the space $E$ is called tame.
Moreover, if $\mathcal{U}\subset L(E,F)$ is a set of continuous linear operators and every $T\in \mathcal{U}$ satisfies the above condition, then we say that the set $\mathcal{U}$ is \emph{tame}.
\end{defin}

In \cite{ND3,ND4}, the second author showed that the families of Toeplitz operators between power series spaces are 
$S$-tame for an appropriate function 
$S$, and the families of Hankel operators are compact; consequently, both families are S-tame.

\begin{rem}
\begin{itemize}
\item[i.] The definition of tameness is independent of the choice of seminorms on $E$ and $F$.
\item[ii.] The tameness of a pair $(E,F)$ is also characterized by the following equivalent condition:  
there exists a sequence of nondecreasing functions $(S_\alpha)_{\alpha\in\mathbb{N}}$, such that for every $T\in L(E,F)$ there exists an $\alpha\in \nc$ such that
$$\hspace{1.8in}\pi_T(k)\leq S_{\alpha}(k),\hspace{1.45in} \forall k\geq k_{0}.$$
\end{itemize}
\end{rem}

\section{Tame Factorization Property} 
In this section, we introduce the tame factorization property
and discuss its basic properties.

Let $E$, $F$, and $G$ be Fréchet spaces. We say that a linear operator $T \in L(E,F)$ factors over $G$ if there exist operators $Q \in L(E,G)$ and $R \in L(G,F)$ such that $T = RQ$. The set of all operators that factor over $G$ will be denoted by $L^G(E,F)$.

\begin{defin} 
 Let $E$, $F$ and $G$ be Fr\'echet spaces. We say that the triple $(E,G,F)$ has tame factorization property, denoted by $(E,G,F)\in \mathfrak{TF}$, if there exists a nondecreasing function $S:\mathbb{N}\to \mathbb{N}$ such that for every $T
\in L^G(E,F)$, there exists $k_0 \in \mathbb{N}$ such that
$$\hspace{1.9in}\pi_T(k)\leq S(k)\hspace{1.4in}\forall k\geq k_{0}.$$  
\end{defin}

Equivalently, $(E,G,F)$ has the tame factorization property if there exists a sequence of nondecreasing functions $S_\alpha:\mathbb{N} \to \mathbb{N}$, $\alpha \in \mathbb{N}$, such that for every $T \in L^G(E,F)$, there exists an $\alpha$ satisfying
$$\hspace{1.9in}\pi_T(k)\leq S_{\alpha}(k)\hspace{1.4in}\forall k\in \nc.$$ 

\begin{rem}\label{Rem1}
Let $E$, $F$ and $G$ be Fr\'echet spaces. The following important statements are straightforward.
\begin{enumerate}
    \item If $(E,G)\in \mathfrak{T}$ and $(G,F)\in \mathfrak{T}$, then $(E,G,F)\in \mathfrak{TF}$.
    \item If $(E, G_{1})\in \mathfrak{T}$ and $(E, G_{2})\in \mathfrak{T}$, then $(E, G_{1}\times G_{2},F)\in \mathfrak{TF}$. 
    \item $(E,G,F)\in \mathfrak{TF}$ for any G, if $(E,F)\in \mathfrak{T}$.
    \item Let $X$ be a quotient of $E$, $Y$ a complemented subspace of $G$, and $Z$ a subspace of $F$.  
Then, if $(E,G,F) \in \mathfrak{TF}$, it follows that $(X,Y,Z) \in \mathfrak{TF}$.
\end{enumerate}
\end{rem}

\section{Characterization of the Tame Factorization Property}

In this section we characterize the tame factorization property for arbitrary Fr\'echet spaces by Theorem~4.2. The proof relies on a bilinear version of Grothendieck's factorization theorem (\cite[Int IV, Th\'eor\`eme A]{grot}, p. I-16) due to Terzio\u{g}lu and Zahariuta \cite[Lemma 2.1]{terzi2}. We then examine the rank-one case, Proposition~4.1, and specialize the resulting condition to triples involving K\"othe spaces (Propositions~4.2--4.5, Theorem~4.4), before turning to the behavior of the tame factorization property under projective tensor products (Theorems~4.5-- 4.6).

For a bilinear map $\theta:E\times F \to G$, the linear maps $\theta_x:F\to G$ and $\theta_y:E\to G$ are defined by
\[\hspace{1.45in}\theta(x,y)=\theta_x(y)=\theta_y(x),\quad\hspace{0.65in} (x,y)\in E\times F.\] 
The bilinear map $\theta$ is called \emph{sequentially separately closed} if $\theta_x$ and $\theta_y$ have sequentially closed graphs for any $x\in E$ and $y\in F$. 

\begin{lemma}[Bilinear Factorization Lemma {\cite[Lemma~2.1]{terzi2}}]\label{BFL}
	Let $E_1,E_2$ and $H_n$ be Fr\'echet spaces and $H=\bigcup H_n$ be an LF-space. If a bilinear map $\theta:E_1\times E_2\to H$ is sequentially separately closed, there is some $k_0\in \nc$ such that $\theta(E_1\times E_2)\subset H_{k_0}$ and $\theta:E_1\times E_2\to H_{k_0}$ is continuous.  
\end{lemma}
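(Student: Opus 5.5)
This is the Terzioğlu–Zahariuta lemma, a bilinear version of Grothendieck's factorization theorem. The plan is to reduce it to the classical linear statement by applying the Baire category theorem twice, then upgrade the resulting inclusion to continuity.

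\emph{Step 1: an $x$-uniform level.} Fix $x\in E_1$. The map $\theta_x:E_2\to H$ has sequentially closed graph. The Fréchet space $E_2$ is Baire, and $H=\bigcup H_n$ is an LF-space, so the linear Grothendieck factorization theorem applies. It gives some $n(x)$ with $\theta_x(E_2)\subset H_{n(x)}$ and $\theta_x:E_2\to H_{n(x)}$ continuous. Now define
\[A_n=\{x\in E_1:\ \theta_x(E_2)\subset H_n\}.\]
These sets are linear subspaces, they increase with $n$, and their union is $E_1$. I will show each $A_n$ is closed, or at least that some $A_n$ is of second category and hence (being a subspace) equal to $E_1$. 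For the second-category route, I will use the closed-graph property of $\theta_y$ together with the continuity of the inclusions $H_n\hookrightarrow H$.

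A cleaner route is to apply Grothendieck's theorem to each $\theta_y:E_1\to H$ to handle the $x$-variable. Concretely, consider the sets
\[B_{n,m}=\{x\in E_1:\ \|\theta_x(y)\|_{H_n,m}\le 1\ \text{for all}\ y\in U\},\]
where $U$ is a neighbourhood in $E_2$. These are closed: if $x_j\to x$, then $\theta_{y}(x_j)\to\theta_y(x)$ in $H$ by the closed-graph property plus boundedness, and the balls of $H_n$ are closed in $H$. Baire's theorem in $E_1$ then yields a single $k_0$ and a neighbourhood $V$ of zero in $E_1$ on which $\theta(V\times U)$ is bounded in $H_{k_0}$.

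\emph{Step 2: continuity.} A bilinear map between Fréchet spaces that is separately continuous is jointly continuous, since by Banach–Steinhaus it is continuous on $E_1\times E_2$ metrizable. So it suffices to show two things: that $\theta(E_1\times E_2)\subset H_{k_0}$, and that $\theta_x,\theta_y:E_i\to H_{k_0}$ have closed graphs. The inclusion follows by absorption from $\theta(V\times U)\subset$ a bounded set of $H_{k_0}$, using bilinearity and the fact that $V$ and $U$ are absorbing. The closed graphs into $H_{k_0}$ follow because $H_{k_0}\hookrightarrow H$ is continuous and the graphs are sequentially closed in $E_i\times H$. The closed graph theorem for Fréchet spaces then gives separate continuity into $H_{k_0}$, hence joint continuity.

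\emph{Main obstacle.} The delicate point is Step 1: producing the uniform index $k_0$ working for all $x$ simultaneously. This needs a double Baire argument, over $E_1$ and over a countable family of levels and seminorms, together with a verification that the relevant sets are closed. That verification uses only sequential separate closedness. I expect to need the standard fact from Grothendieck's theorem that a bounded set of a step $H_n$ has closure in $H$ contained in some $H_{n'}$. I would try first to handle this by passing to Banach discs of $H_n$ and using closedness of their bipolars.
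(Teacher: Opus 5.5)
The paper gives no proof of this lemma; it quotes it from Terzio\u{g}lu--Zahariuta, so your sketch has to stand on its own. Some parts are fine: applying Grothendieck's theorem to each $\theta_x$ to get $E_1=\bigcup_n A_n$ is correct, and so is your Step~2 (closed graph theorem into $H_{k_0}$, then Banach--Steinhaus). The step you yourself flag as the main obstacle, the uniform index $k_0$, is not established, and both routes you propose rest on false principles.

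First, a linear subspace of second category need not be the whole space. For infinite-dimensional $E_1$, split a Hamel basis into countably many nonempty pieces. This writes $E_1$ as an increasing union of proper subspaces, and by Baire one of them is non-meager. Nor is $A_n$ closed in general. Take $E_1=\ell^1$, $E_2=\mathbb{K}$, $H_1=s$, $H_n=\ell^1$ for $n\ge 2$, and $\theta(x,t)=tx$; then $A_1=s$, which is dense and not closed in $\ell^1$.

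Second, the same example breaks the closedness of your sets $B_{n,m}$. With $U$ the unit disc, $B_{1,m}=\{x\in s:\ \sum_j|x_j|j^m\le 1\}$. Its $\ell^1$-closure contains $x_j=c\,j^{-m-2}$, which is not in $s$. So ``the balls of $H_n$ are closed in $H$'' is false.

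The target you aim for is also false in general. You want $\theta(V\times U)$ to be \emph{bounded} in $H_{k_0}$, but this fails whenever $H_{k_0}$ is not normable. Take $E_1=H=H_n=s$, $E_2=\mathbb{K}$, $\theta(x,t)=tx$: then $\theta(V\times U)$ contains $t_0V$ for some $t_0\neq 0$, which is unbounded. For your absorption argument you only need $\theta(V\times U)\subset H_{k_0}$. The closure property of bounded subsets of a step that you expect to invoke is not needed at all.

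The missing ingredient is Banach's form of the open mapping theorem: a continuous linear map from a Fr\'echet space whose range is non-meager in a metrizable space is onto. Apply it one $y$ at a time; this is how your remark about ``the closed-graph property of $\theta_y$ together with the continuity of the inclusions'' should be used.

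By Baire, some $A_{k_0}$ is non-meager. Fix $y\in E_2$ and let $\Gamma_y=\{(x,\theta_y x):\ x\in E_1,\ \theta_y x\in H_{k_0}\}$. Suppose $(x_j,\theta_y x_j)\to(x,h)$ in $E_1\times H_{k_0}$. The convergence also holds in $E_1\times H$, so $\theta_y x=h$ by sequential closedness of the graph of $\theta_y$. Since $E_1\times H_{k_0}$ is metrizable, $\Gamma_y$ is a closed subspace and hence a Fr\'echet space.

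The projection $\Gamma_y\to E_1$ is continuous and linear, with range $\theta_y^{-1}(H_{k_0})$. This range contains $A_{k_0}$, so it is non-meager and therefore equals $E_1$. As $y$ was arbitrary, $\theta(E_1\times E_2)\subset H_{k_0}$, and your Step~2 completes the proof.
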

For a map $\phi:\nc\to \nc$, we define
\begin{align*}
L_\phi(E,F)
&= \Bigl\{ T\in L(E,F) : \|T\|_{k,\phi(k)}<\infty \text{ for all } k\in \mathbb{N} \Bigr\},\\
L_{\phi,n}(E,F)
&= \Bigl\{ T\in L(E,F) : \|T\|_{k,\phi(k)}<\infty \text{ for all } k\ge n \Bigr\}.
\end{align*}
The spaces $L_\phi(E,F)$ and $L_{\phi,n}(E,F)$ are Fr\'echet spaces  with respect to the seminorms
\[\hspace{1.5in}
|T|_m = \max_{1\le i\le m} \|T\|_{i,\phi(i)},\hspace{1.25in} m\ge 1,
\]
and
\[\hspace{1.5in}
|T|_{m,n} = \max_{n\le i\le m} \|T\|_{i,\phi(i)}, \hspace{1.15in} m\ge n,
\]
respectively, and we have:
\[
L_\phi(E,F)=\bigcap_{n\in \mathbb{N}} L_{\phi,n}(E,F).
\]

With this setup, we can start to examine tame factorization property in a general Fr\'echet space context.

\begin{theorem}\label{sec:thm2}
Let $E$, $F$, and $G$ be Fréchet spaces. 
Then the triple $(E,G,F)$ has the tame factorization property if and only if there exists a nondecreasing function 
$S:\mathbb{N}\to\mathbb{N}$ such that for every function $\phi:\mathbb{N}\to\mathbb{N}$ we have
\begin{equation}\label{sec:tamefactorineq}
\begin{split}
\exists k_0\in\mathbb{N}\quad &\forall k\ge k_0 \quad \exists q_k\in\mathbb{N}, C_k>0 \\
&\|T\|_{k,S(k)} 
\le C_k 
\max_{1\le q\le q_k} \|R\|_{q,\phi(q)} 
\max_{1\le q\le q_k} \|Q\|_{q,\phi(q)},
\end{split}
\end{equation}
for every $Q\in L_\phi(E,G)$, $R\in L_\phi(G,F)$ with $T=RQ$.  
\end{theorem}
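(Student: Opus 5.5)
The plan is to prove both directions, with the Bilinear Factorization Lemma (Lemma~\ref{BFL}) carrying the forward direction, exactly as Terzio\u{g}lu and Zahariuta do for $\mathfrak{BF}$.

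\emph{Sufficiency.} Assume the estimate \eqref{sec:tamefactorineq} holds for some nondecreasing $S$. Let $T\in L^G(E,F)$ with $T=RQ$, $Q\in L(E,G)$, $R\in L(G,F)$. Since $Q$ and $R$ are continuous, we can choose one function $\phi:\mathbb{N}\to\mathbb{N}$ with $\|Q\|_{q,\phi(q)}<\infty$ and $\|R\|_{q,\phi(q)}<\infty$ for all $q$; for instance, take $\phi(q)=\max(\pi_Q(q),\pi_R(q))$, using that the seminorms are increasing. Then $Q\in L_\phi(E,G)$ and $R\in L_\phi(G,F)$. The estimate gives $\|T\|_{k,S(k)}<\infty$ for all $k\ge k_0$, that is, $\pi_T(k)\le S(k)$ for $k\ge k_0$. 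Hence $(E,G,F)\in\mathfrak{TF}$.

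\emph{Necessity.} Assume $(E,G,F)\in\mathfrak{TF}$, and use the equivalent form with a sequence $(S_\alpha)$. I would first replace it by a single function. Set $S(k)=\max_{\alpha\le k}S_\alpha(k)$, which is nondecreasing and satisfies $S\ge S_\alpha$ for $k\ge\alpha$. Then every $T\in L^G(E,F)$ satisfies $\|T\|_{k,S(k)}<\infty$ for $k\ge n$ and some $n$. Now fix $\phi$ and consider the bilinear map
\[
\theta:L_\phi(G,F)\times L_\phi(E,G)\to H:=\bigcup_{n} L_{S,n}(E,F),\qquad \theta(R,Q)=RQ .
\]
The domain factors are Fr\'echet spaces, and each $L_{S,n}(E,F)$ is Fr\'echet with the seminorms $|\cdot|_{m,n}$. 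These spaces increase with $n$ and embed continuously into one another, so $H$ is an LF-space. By tameness, $\theta$ indeed maps into $H$. Next I check sequential separate closedness. Suppose $Q_j\to Q$ in $L_\phi(E,G)$ and $RQ_j\to T$ in some $L_{S,n}(E,F)$. Both convergences imply pointwise convergence $Q_jx\to Qx$ and $RQ_jx\to Tx$ for each $x\in E$, because every seminorm of $x$ is dominated through the finite operator seminorms. Continuity of $R$ then gives $Tx=RQx$. The same argument works in the other variable.

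Lemma~\ref{BFL} then yields $k_0$ such that $\theta$ maps continuously into $L_{S,k_0}(E,F)$. Continuity of a bilinear map between Fr\'echet spaces means: for each $m\ge k_0$ there are $q_m$ and $C_m$ with
\[
|RQ|_{m,k_0}\le C_m\max_{q\le q_m}\|R\|_{q,\phi(q)}\max_{q\le q_m}\|Q\|_{q,\phi(q)} .
\]
Taking $k=m$ gives \eqref{sec:tamefactorineq}. The key point is that $S$ does not depend on $\phi$; only $k_0$, $q_k$ and $C_k$ do.

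The main obstacle I expect is making sure $H$ is a genuine LF-space and that Lemma~\ref{BFL} applies. This needs Hausdorffness of the inductive limit, which follows because $H$ embeds continuously into the space of all linear maps with the pointwise topology. It also needs closedness of the graphs in the sense the lemma requires. The passage from the sequence $(S_\alpha)$ to a single $S$ must be handled carefully, but the diagonal maximum above should suffice.
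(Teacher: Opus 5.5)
Your proposal is correct and follows the paper's proof. Sufficiency goes through $\phi=\max(\pi_Q,\pi_R)$, and necessity applies the Bilinear Factorization Lemma to $(Q,R)\mapsto RQ$ into the LF-space $\bigcup_n L_{S,n}(E,F)$. Your detour through the sequence $(S_\alpha)$ is unnecessary, since the definition of $\mathfrak{TF}$ already gives a single $S$; your checks of the LF-structure and of sequential separate closedness (via continuity of point evaluations on $H$) supply details the paper only asserts.
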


\begin{proof} Let us assume that there exists a nondecreasing $S:\nc\to \nc$ such that for every function $\phi:\nc\to \nc$, the condition \ref{sec:tamefactorineq} is satisfied for every  $Q\in L_\phi(E,G)$, $R\in L_\phi(G,F)$ and $T=RQ$. 

Under this assumption, we show that the triple 
$(E,G, F)$ satisfies the tame factorization property. Let $T\in L^G(E,F)$ and $T=RQ$. 
Now we define a function $\varphi:\nc\to \nc$ as
$$\varphi(k)=\max\bigl\{\pi_R(k),\pi_Q(k)\bigr\} $$
for every $k\in \nc$. So we can write
\[ \|R\|_{q,\varphi(q)}=\sup_{\|x\|_{\varphi(q)}\leq 1}\|Rx\|_q\leq \sup_{\|x\|_{\pi_{R}(q)}\leq 1}\|Rx\|_q=\|R\|_{q,\pi_R(q)}<\infty , \]
and 
\[\|Q\|_{q,\varphi(q)}=\sup_{\|x\|_{\varphi(q)}\leq 1}\|Qx\|_q\leq \sup_{\|x\|_{\pi_{Q}(q)\leq 1}}\|Qx\|_q=\|Q\|_{q,\pi_Q(q)}<\infty \]
for every $q\in \mathbb{N}$. Let us apply condition \ref{sec:tamefactorineq} to the function $\varphi$. It then follows that there exists a $k_{0}\in\mathbb{N}$ such that for every $k\ge k_{0}$, there exist $q_{k}\in\mathbb{N}$ and $C_{k}>0$
satisfying 
\begin{equation*}
\begin{split}
\|T\|_{k,S(k)}&\leq C_{k} \max_{1\leq q\leq q_k}\Bigl\lbrace \|R\|_{q,\varphi(q)}\Bigr\rbrace \max_{1\leq q\leq q_k}\Bigl\lbrace \|Q\|_{q,\varphi(q)}\Bigr\rbrace
\\
&\leq C_k \max_{1\leq q\leq q_k}\Bigl\lbrace \|R\|_{q,\pi_{R}(q)}\Bigr\rbrace \max_{1\leq q\leq q_k}\Bigl\lbrace \|Q\|_{q,\pi_{Q}(q)}\Bigr\rbrace
<\infty.
\end{split}
\end{equation*}
So $T$ is S-tame. Since $T\in L^G(E,F)$ was arbitrary, this shows that $(E,G,F)$ has tame factorization property.

For the other direction, suppose that the triple $(E,G,F)$ has tame factorization property. Then there exists a map $S:\nc  \to\nc$ so that for every operator $T \in L^G(E,F)$, there exists a $k \in \mathbb{N}$ such that 
$T \in L_{S,k}(E,F)$. Consequently,
\[L^G(E,F)\subseteq \bigcup_{n\in \nc}L_{S,n}(E,F),\]
which is an LF-space.

Given $\phi:\nc\to\nc$, we define a bilinear map \[\theta:L_\phi(E,G)\times L_\phi(G,F)\to L^G(E,F) \quad \quad \theta(Q,R):=RQ.\] 
Since $\theta$ is sequentially separately closed, by Lemma \ref{BFL} there exists some $k_0\in \nc$ such that
\[\theta(L_\phi(E,G)\times L_\phi(G,F))\subseteq L_{S,k_0}(E,F), \]
and the associated canonical operator, where $\hat{\otimes}_\pi$ denotes the completion of the projective tensor product,
\[\hat{\theta}:L_\phi(E,G)\hat{\otimes}_\pi L_\phi (G,F)\to L_{S,k_0}(E,F) \]
 is linear and continuous. This means that for every $ k\geq k_0$ there exist $q_k\in \nc$ and $C_k>0$ such that 
\begin{align*}
	|\hat{\theta}(Q\otimes R)|_{k,k_0}&\leq C_{k}|R|_{q_k}|Q|_{q_k}\\
	\|RQ\|_{k,S(k)}\leq \max_{k_0\leq q\leq k} \|RQ\|_{q,S(q)} &\leq C_k\max_{1\leq q\leq q_k}\|R\|_{q,\phi(q)}\max_{1\leq q\leq q_k}\|Q\|_{q,\phi(q)}.
\end{align*} 
This gives the desired result. 
\end{proof}
\begin{rem}
If $E=G$ or $G=F$, then Theorem 4.2 coincides with Piszczek’s characterization of pairs of Fréchet spaces satisfying  $(E,F)\in \mathfrak{T}$ (see \cite[Theorem 2.1]{pisek2}).
\end{rem}

\begin{rem}
	We note that, for an arbitrary function $\phi:\nc\to \nc$, if we define:
	\[\psi:\nc\to\nc\quad \psi(n):=\max_{1\leq m\leq n}\phi(m),\]
	then $\psi(n)\geq \phi(n)$, so $\|x\|_{\psi(n)}\geq \|x\|_{\phi(n)}$, which implies $\|\cdot\|_{n,\psi(n)}\leq \|\cdot\|_{n,\phi(n)}$  for all $n\in \nc$. So without loss of generality, we can take each $\phi$  in Theorem \ref{sec:thm2} to be nondecreasing. 
\end{rem}

We now focus on rank one operators. Let $E$, $F$ and $G$ be Fr\'echet spaces and $u^{\prime}\in E^{\prime}$, $z\in F$. The rank one operator $T=u^{\prime}\otimes z$ defined as
\begin{equation*}
\begin{split}
T:\;&E\to F~\\
&x\to T(x)=(u^{\prime}\otimes z)(x)= u^{\prime}(x)z.
\end{split}
\end{equation*} 
It is easily seen that 
$T$ is linear and continuous. Now we choose some $w\in G$ and $w^{\prime}\in G^{'}$ satisfying $w^{\prime}(w)=1$, and define the operators
\[
Q:\;E\to G, \qquad Q(x)=(u^{\prime}\otimes w)(x)= u^{\prime}(x)w,
\]
and 
\[R:\;G\to F, \qquad R(y)=(w^{\prime}\otimes z)(y)= w^{\prime}(y)z.\]
Then, it follows that $T=RQ$ and $T$ belongs to $L^{G}(E,F)$. Furthermore, for all $k,r\in \mathbb{N}$, we have 
\[\|T\|_{k,r}= \|u^{\prime}\otimes z\|_{k,r}=\sup_{\|x\|_r\leq 1}|u^{\prime}(x)|\|z\|_k=\|u^{\prime}\|^*_r\|z\|_k.\]
Similarly, we can write
\[\|Q\|_{k,r}= \|u^{\prime}\|^*_r\|w\|_k, \qquad \text{and} \qquad \|R\|_{k,r}= \|w^{\prime}\|^*_r\|z\|_k.\]
It is obvious that for any $\phi:\nc\to \nc$, $Q\in L_\phi(E,G)$ and $R\in L_\phi (G,F)$. 

Applying Theorem~\ref{sec:thm2} to rank one operators, we obtain the following proposition.

\begin{pro}\label{sec:prop1}
 If $(E, G, F)$ has tame factorization property, then there exists a nondecreasing $ S:\mathbb{N}\to\mathbb{N}$ such that for every nondecreasing $\phi:\mathbb{N}\to\mathbb{N}$ we have 
\begin{equation*}
\begin{split}
\exists k_0\in\mathbb{N} \quad &\forall k\ge k_0,\quad \exists q_k\in\mathbb{N}, C_{k}>0 \\
&\|u^{\prime}\|^*_{S(k)}\|z\|_k\leq C_{k}\max_{1\leq q\leq q_k} \Bigl\lbrace \|u^{\prime}\|^*_{\phi(q)}\|w\|_q\Bigl\rbrace \max_{1\leq q\leq q_k}\Bigl\lbrace\|w^{\prime}\|^*_{\phi(q)}\|z\|_q\Bigl\rbrace 
\end{split}\end{equation*}
for every $u^{\prime}\in E^{\prime}$, $z\in F$,  $w^{\prime}\in G^{\prime}$ and $w\in G$, with $w^{\prime}(w)=1$. 
\end{pro}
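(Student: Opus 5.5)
The plan is to specialize Theorem~\ref{sec:thm2} to the rank one factorization described just before the statement. Since $(E,G,F)\in\mathfrak{TF}$, the necessity part of Theorem~\ref{sec:thm2} gives a nondecreasing $S:\nc\to\nc$ such that condition \eqref{sec:tamefactorineq} holds for every $\phi:\nc\to\nc$. In particular, it holds for every nondecreasing $\phi$. We keep this $S$, fix a nondecreasing $\phi$, and take the corresponding $k_0$ and, for each $k\ge k_0$, the constants $q_k$ and $C_k$. These depend only on $\phi$ and $k$, and not on the operators $Q,R$. This uniformity is the key point: \eqref{sec:tamefactorineq} is a single inequality valid for all $Q\in L_\phi(E,G)$ and $R\in L_\phi(G,F)$ at once.

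Next, fix arbitrary $u'\in E'$, $z\in F$, $w'\in G'$ and $w\in G$ with $w'(w)=1$, and set $Q=u'\otimes w$ and $R=w'\otimes z$. As computed above, $RQ(x)=w'(u'(x)w)z=u'(x)z$, so $RQ=T=u'\otimes z$.

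We also check that $Q\in L_\phi(E,G)$ and $R\in L_\phi(G,F)$. We have $\|Q\|_{q,\phi(q)}=\|u'\|^*_{\phi(q)}\|w\|_q$. Since $u'$ is continuous, $\|u'\|^*_r<\infty$ for all $r$ beyond some index. For small $r$ the dual seminorm could be infinite, and this is the one point needing care. The remedy is to note that if some factor on the right-hand side is $+\infty$, the inequality holds trivially (with the convention $0\cdot\infty$ avoided, since $w\neq0$ and $w'\ne 0$). Alternatively, one can enlarge $\phi$, as the remark after Theorem~\ref{sec:thm2} allows. Either way, for those $\phi$ for which the right-hand side is finite, we have $Q,R\in L_\phi$ and \eqref{sec:tamefactorineq} applies.

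Finally, we substitute the rank one norm formulas into \eqref{sec:tamefactorineq}:
\[
\|T\|_{k,S(k)}=\|u'\|^*_{S(k)}\|z\|_k,\qquad
\|Q\|_{q,\phi(q)}=\|u'\|^*_{\phi(q)}\|w\|_q,\qquad
\|R\|_{q,\phi(q)}=\|w'\|^*_{\phi(q)}\|z\|_q .
\]
This yields exactly the stated inequality for all $k\ge k_0$ with the same $q_k$ and $C_k$. The only real obstacle is the finiteness and membership issue above; the rest is direct substitution.
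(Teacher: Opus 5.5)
Your route is the paper's: specialize Theorem~\ref{sec:thm2} to $Q=u'\otimes w$ and $R=w'\otimes z$. The paper simply asserts that $Q\in L_\phi(E,G)$ and $R\in L_\phi(G,F)$ for every $\phi$. You are right that this is not automatic, since it needs $\|u'\|^*_{\phi(q)}\|w\|_q<\infty$ for \emph{all} $q$. However, your repair does not close the gap.

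First, finiteness of the right-hand side does not give $Q,R\in L_\phi$, because it only involves $q\le q_k$. Take $G=\omega$ with $\|y\|_q=\sum_{i\le q}|y_i|$ and $w=e_N$ with $N>q_k$. Then $\|w\|_{q_k}=0$ although $w\neq 0$, since the $\|\cdot\|_q$ are only seminorms. So the first maximum $\max_{q\le q_k}\|Q\|_{q,\phi(q)}$ is $0$, however badly $u'$ behaves at the indices $\phi(q)$ with $q>q_k$.

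The same example refutes your parenthetical claim that $w\neq0$, $w'\neq0$ rule out $0\cdot\infty$. With $w'=e'_N$ and $N>\phi(q_k)$, the second maximum is $+\infty$ while the first is $0$. So the statement must be read with the right-hand side equal to $+\infty$ whenever a factor is infinite. With $0\cdot\infty=0$ it genuinely fails, e.g.\ for $(\lambda(A),\omega,\lambda(A))$ with $a_{n,1}>0$. That triple is in $\mathfrak{TF}$, because every operator factoring through $\omega$ then has finite rank and is therefore bounded.

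Second, enlarging $\phi$ to force $u'\otimes w\in L_\phi$ would make $\phi$ depend on $u'$, since in general no single $r$ has $\|u'\|^*_r<\infty$ for all $u'\in E'$. That destroys exactly the uniformity of $k_0,q_k,C_k$ you called the key point. The remark after Theorem~\ref{sec:thm2} only replaces $\phi$ by its running maximum.

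The gap can be closed as follows.
\begin{itemize}
\item Enlarging $q_k$ only increases the maxima, so you may assume $q_k\ge\max\{k,\phi(k)\}$. Call the two maxima $M_Q$ and $M_R$, and assume both are finite; otherwise the right-hand side is $+\infty$.
\item Since $\phi$ is nondecreasing and $r\mapsto\|u'\|^*_r$ is nonincreasing, the set $\{q:\|u'\|^*_{\phi(q)}=\infty\}$ is an initial segment of $\mathbb{N}$.
\item Suppose $Q\notin L_\phi$ but $M_Q<\infty$. Then this segment contains $\{1,\dots,q_k\}$, hence $\|w\|_{q_k}=0$. Because $|w'(y)|\le\|w'\|^*_r\|y\|_r$ whenever $\|w'\|^*_r<\infty$, the relation $w'(w)=1$ forces $\|w'\|^*_r=\infty$ for all $r\le q_k$, in particular for $r=\phi(k)$. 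Then $M_R<\infty$ forces $\|z\|_k=0$.
\item Likewise, if $R\notin L_\phi$ but $M_R<\infty$, the same monotonicity argument gives $\|z\|_{q_k}=0$, hence $\|z\|_k=0$.
\item In both residual cases $\|T\|_{k,S(k)}=\sup_{\|x\|_{S(k)}\le1}|u'(x)|\,\|z\|_k=0$. In the remaining case $Q,R\in L_\phi$, and your substitution into Theorem~\ref{sec:thm2} goes through as written.
\end{itemize}
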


We now want to examine what happens when some of the spaces under consideration are Köthe spaces. In some cases, the condition given in Proposition~\ref{sec:prop1} is not only necessary but also sufficient for the tame factorization property, which yields a more concrete characterization.

\begin{pro}\label{sec:cor1}
Let $\lambda(B)$ be a nuclear K\"othe space. $(E,\lambda(B),F)\in \mathfrak{TF}$ if and only if there exists a nondecreasing $ S:\mathbb{N}\to\mathbb{N}$ such that for every nondecreasing $\phi:\mathbb{N}\to\mathbb{N}$ we have
\begin{equation}\label{sec:wfac1}
\begin{split}
\exists k_0\in\mathbb{N} \quad &\forall k\ge k_0,\quad \exists q_k\in\mathbb{N}, C_{k}>0 ~\\	&\|u^{\prime}\|^*_{S(k)}\|z\|_k\leq C_k\max_{1\leq q\leq q_k} \Bigl\lbrace \|u^{\prime}\|^*_{\phi(q)}b_{i,q}\Bigl\rbrace \max_{1\leq q\leq q_k}\Biggl\lbrace\frac{\|z\|_q}{b_{i,\phi(q)}}\Biggr\rbrace
\end{split}
\end{equation}
for every  $u^{\prime}\in E^{\prime}$, $z\in F$, $i\in \mathbb{N}$. 
\end{pro}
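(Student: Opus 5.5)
Necessity is a specialization of Proposition~\ref{sec:prop1}; sufficiency goes through Theorem~\ref{sec:thm2}, writing $T=RQ$ as a sum over the coordinates of $\lambda(B)$ and using nuclearity to make that sum converge.

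\emph{Necessity.} Assume $(E,\lambda(B),F)\in\mathfrak{TF}$ and fix $i\in\mathbb{N}$. In Proposition~\ref{sec:prop1} take $w=e_i/b_{i,m_0}$ and $w'=b_{i,m_0}e_i'$, where $e_i$ is the unit vector, $e_i'$ the coordinate functional, and $m_0$ is chosen with $b_{i,m_0}>0$. Then $w'(w)=1$. By homogeneity the product $\|w\|_q\,\|w'\|^*_{\phi(q)}$ does not depend on this normalization, so we may simply use $\|e_i\|_q=b_{i,q}$ and $\|e_i'\|^*_{\phi(q)}=1/b_{i,\phi(q)}$. Substituting these into Proposition~\ref{sec:prop1} gives exactly (\ref{sec:wfac1}). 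If some $b_{i,\phi(q)}$ vanish, I read the quotient as $+\infty$, which only weakens the bound; the same convention is used below.

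\emph{Sufficiency.} I verify condition (\ref{sec:tamefactorineq}) of Theorem~\ref{sec:thm2} with $S$ replaced by the same $S$ and a suitably enlarged function. Fix a nondecreasing $\phi$ and let $Q\in L_\phi(E,\lambda(B))$, $R\in L_\phi(\lambda(B),F)$. Write
\[
Qx=\sum_i u_i'(x)\,e_i \quad\text{with } u_i'=e_i'\circ Q\in E', \qquad z_i=Re_i\in F,
\]
so that $Tx=\sum_i u_i'(x)\,z_i$, i.e.\ $T=\sum_i u_i'\otimes z_i$. The two factors are controlled by
\[
\|u_i'\|^*_{r}\,b_{i,q}\le \|Q\|_{q,r}, \qquad \|z_i\|_q\le \|R\|_{q,r}\,b_{i,r}.
\]
I do not apply (\ref{sec:wfac1}) with $\phi$ itself but with the shifted function $\psi(q)=\phi(m(q))$, where $m(q)\ge q$ is given by nuclearity: $\sum_i b_{i,q}/b_{i,m(q)}<\infty$. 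With $k_0,q_k,C_k$ the constants of (\ref{sec:wfac1}) for $\psi$, for each $k\ge k_0$ we get
\[
\|u_i'\|^*_{S(k)}\,\|z_i\|_k\le C_k\max_{q\le q_k}\Bigl\{\|Q\|_{m(q),\phi(m(q))}\,\frac{b_{i,q}}{b_{i,m(q)}}\Bigr\}\;\max_{q\le q_k}\Bigl\{\|R\|_{q,\phi(m(q))}\,\frac{b_{i,\phi(m(q))}}{b_{i,\phi(m(q))}}\Bigr\},
\]
and the second fraction cancels. The first factor inherits a summable weight $b_{i,q}/b_{i,m(q)}$; since $q$ ranges over finitely many values, a maximum of summable sequences is bounded by their sum and is summable. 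Also $\|R\|_{q,\phi(m(q))}\le\|R\|_{q,\phi(q)}$ because $\phi$ is nondecreasing.

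Summing over $i$ then gives
\[
\|T\|_{k,S(k)}\le\sum_i\|u_i'\|^*_{S(k)}\,\|z_i\|_k\le C_k'\max_{q\le q_k'}\|Q\|_{q,\phi(q)}\max_{q\le q_k'}\|R\|_{q,\phi(q)},
\]
with $q_k'=\max_{q\le q_k} m(q)$. This is (\ref{sec:tamefactorineq}), and Theorem~\ref{sec:thm2} yields $(E,\lambda(B),F)\in\mathfrak{TF}$.

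The main obstacle is this bookkeeping of which seminorm index each factor absorbs. The $Q$-factor needs a larger index to gain nuclear decay, while the $R$-factor needs its index to match the one in the denominator. Feeding the shifted function $\psi=\phi\circ m$, rather than $\phi$, into the hypothesis is what reconciles the two. I would also check that the series $\sum_i u_i'\otimes z_i$ actually represents $T$; this follows from the continuity of $R$ and the convergence of $\sum_i x_ie_i$ in $\lambda(B)$.
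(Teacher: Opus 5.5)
Your argument is correct and follows the paper's own proof. Necessity comes from specializing Proposition~\ref{sec:prop1} to $w=e_i$, $w'=e_i'$; your rescaling there is unnecessary, since $e_i'(e_i)=1$ already. Sufficiency comes from writing $T=\sum_i u_i'\otimes z_i$, applying (\ref{sec:wfac1}) to a shifted function, summing over $i$ via the Grothendieck--Pietsch criterion, and concluding with Theorem~\ref{sec:thm2}.

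The only difference is which factor absorbs the summable weight. The paper takes $\psi>\phi$ with $\sum_i b_{i,\phi(q)}/b_{i,\psi(q)}<\infty$, so the $R$-factor carries the weight and the $Q$-factor is handled by monotonicity of the dual seminorms, with $q_k$ left unchanged. Your choice $\psi=\phi\circ m$ puts the weight on the $Q$-factor instead. This costs enlarging $q_k$ to $\max_{q\le q_k}m(q)$, and you must take $m$ nondecreasing (always possible) so that $\phi\circ m$ is admissible in the hypothesis.
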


\begin{proof} Let assume that  $(E,\lambda(B),F)\in \mathfrak{TF}$, $u^{\prime}\in E^{\prime}$, $z\in F$, and $i\in \mathbb{N}$. Let $e_{i}$ denote the sequence $(0,\dots,0,1,0,\dots) $ with the entry 1 in the $i$-th position and zeros elsewhere. For all $x\in \lambda(B)$ and $i\in \nc$, we define 
$e'_i(x)= x_i$. 
Since $e_{i}\in \lambda(B)$ and  $e^{\prime}_{i}\in (\lambda(B))^{\prime}$, we have 
$$ e_i'(e_{i})=1, \qquad \|e_i\|_k=b_{i,k}\qquad \text{and} \qquad \|e_i'\|^*_k=\frac{1}{b_{i,k}}$$
for every  $k\in \mathbb{N}$. By Proposition \ref{sec:prop1}, the condition \ref{sec:wfac1} holds, that is, there exists a map $ S:\mathbb{N}\to\mathbb{N}$ such that for every nondecreasing function $ \phi:\mathbb{N}\to\mathbb{N}$ we have \begin{equation*}
\begin{split}
\exists k_0\in\mathbb{N} \quad &\forall k\ge k_0,\quad \exists q_k\in\mathbb{N}, C_{k}>0 ~\\	&\|u^{\prime}\|^*_{S(k)}\|z\|_k\leq C_k\max_{1\leq q\leq q_k} \Bigl\lbrace \|u^{\prime}\|^*_{\phi(q)}b_{i,q}\Bigl\rbrace \max_{1\leq q\leq q_k}\Biggl\lbrace\frac{\|z\|_q}{b_{i,\phi(q)}}\Biggr\rbrace
\end{split}
\end{equation*}
for every  $u^{\prime}\in E^{\prime}$, $z\in F$, $i\in \mathbb{N}$. This establishes one direction of the proof.

Now we suppose that there exists a nondecreasing function $\exists S:\mathbb{N}\to\mathbb{N}$ such that for every nondecreasing function $\phi:\mathbb{N}\to\mathbb{N}$, the condition \ref{sec:wfac1} holds
for every  $u^{\prime}\in E^{\prime}$, $z\in F$, $i\in \mathbb{N}$. 

Since $\lambda(B)$ is nuclear, Grothendieck-Pietsch Criteria (\cite{vogt4}[Theorem 28.15]) yield that given $\phi:\nc\to\nc$, there exists a $ \psi:\nc\to\nc$ satisfying $\psi(k)> \phi(k)$  and 
\[D(k)=\sum_{i=1}^\infty\frac{b_{i,\phi(k)}}{b_{i,\psi(k)}}<\infty\]
for every $k\in \nc$. Without loss of generality, we may assume that $\psi$ is nondecreasing.

Let $T\in L(E,F)$ be an operator that admits a factorization $T=RQ$ through $\lambda(B)$. For every $i\in \nc$, we define 
$$u'_i=e_i'\circ Q\in E' \qquad \text{and} \qquad z_i=Re_i\in F.$$ 
Then we have
$$ Qx=\sum^{\infty}_{i=1} u'_{i}(x)e_{i} \qquad\text{and} \qquad  Tx=\sum_{i=1}^\infty u'_i(x)z_i.$$
By our assumption, there exists a $k_{0}\in \nc$ such that for every $k\geq k_{0}$, there exist $q_k\in \nc$ and $C_k>0$ such that the inequality
\begin{equation*}\label{sec:eqn1}
\|u'_i\|^*_{S(k)}\|z_i\|_k\leq C_k\max_{1\leq q\leq q_k} \Bigl\lbrace \|u'_i\|^*_{\psi(q)}b_{i,q}\Bigr\rbrace \max_{1\leq q\leq q_k}\Biggl\lbrace\frac{\|z_i\|_q}{b_{i,\psi(q)}}\Biggr\rbrace \end{equation*}
holds for every $i\in \mathbb{N}$. Since $\psi(k)> \phi(k)$ for every $k\in \mathbb{N}$, we write
\[\|u'_i\|^*_{\psi(q)}b_{i,q}\leq \|u'_i\|^*_{\phi(q)}b_{i,q} =\sup_{\|x\|_{\phi(q)}\leq 1}|u'_i(x)|\|e_i\|_q\leq \|Q\|_{q,\phi(q)},\]
and since $B$ is a K\"{o}the matrix, we write
\begin{equation*}
\begin{split}
\sum_{i=1}^\infty \frac{\|z_i\|_q}{b_{i,\psi(q)}}=\sum_{i=1}^\infty \frac{\|z_i\|_q}{b_{i,\phi(q)}} \frac{b_{i,\phi(q)}}{b_{i,\psi(q)}}\leq D(q)\sup_{i\in\nc} \frac{\|z_i\|_q}{b_{i,\phi(q)}} \leq D(q)\|R\|_{q,\phi(q)}. 
\end{split}
\end{equation*}
Combining these, we get:
\begin{align*}
\|T\|_{k,S(k)}&\leq \sum_{i=1}^\infty \|u'_i\|^*_{S(k)}\|z_i\|_k\\
&\leq C_k\sum_{i=1}^\infty \max_{1\leq q\leq q_k} \Bigl\lbrace \|u'_i\|^*_{\psi(q)}b_{i,q}\Bigr\rbrace \max_{1\leq q\leq q_k}\Biggl\lbrace\frac{\|z_i\|_q}{b_{i,\psi(q)}}\Biggr\rbrace\\
&\leq C_k\max_{1\leq q\leq q_k}\|Q\|_{q,\phi(q)}\sum_{i=1}^\infty  \max_{1\leq q\leq q_k}\Biggl\lbrace\frac{\|z_i\|_q}{b_{i,\psi(q)}}
\Biggr\rbrace\\
&\leq C_k\Big(\sum_{q=1}^{q_k}D(q)\Big)\max_{1\leq q\leq q_k}\Bigl\lbrace\|Q\|_{q,\phi(q)}\Bigl\rbrace\max_{1\leq q\leq q_k}\Bigl\lbrace\|R\|_{q,\phi(q)}\Bigl \rbrace<\infty.
\end{align*}
By Theorem \ref{sec:thm2}, the triple $(E,\lambda(B),F)$ has tame factorization property.
\end{proof}

The following lemma provides explicit formulas for the operator seminorms that will be needed in the subsequent results.

\begin{lemma}\label{LE1} Let $r,p\in\mathbb{N}$. The operator seminorm $\|\cdot\|_{r,p}$ admits the following formulas in the cases considered below.
\begin{enumerate}
    \item  Let $Y$ be a Fr\'echet space and $S\in L(\lambda(A), Y)$. Then,
\begin{equation*}
\|S\|_{r,p}= \sup_{i\in \mathbb{N}}\Biggl\lbrace \frac{\|Se_{i}\|_{r}}{a_{i,p}}\Biggr\rbrace .
\end{equation*}
\item Let $U\in L(\lambda(A), \lambda(B))$ and $(u_{i,j})_{i,j\in \nc}$ be the matrix representation of $U$, defined by $\displaystyle U(e_{i})=\sum^{\infty}_{j=1} u_{i,j}e_{j}$ for every $i\in\nc$. Then,
$\vspace{-0.6em}$
\begin{equation*}
\|U\|_{r,p}= \sup_{i\in \nc} \Biggl\lbrace\sum^{\infty}_{j=1} |u_{i,j}| \frac{b_{j,r}}{a_{i,p}}\Biggr\rbrace
\end{equation*}
\item Let $V\in L(\lambda^{\infty}_{0}(A), \lambda(B))$ and $(v_{i,j})_{i,j\in \nc}$ be the matrix representation of $V$, given by $\displaystyle V(e_{i})=\sum^{\infty}_{j=1} v_{i,j}e_{j}$ for every $i\in\nc$. Then,
$\vspace{-0.6em}$
\begin{equation*}
\|V\|_{r,p}=\sum^{\infty}_{i=1} \sum^{\infty}_{j=1} |v_{i,j}| \frac{b_{i,q}}{a_{j,q}}.
\end{equation*}
\item Let $W\in L(\lambda^{\infty}_{0}(A), \lambda^{\infty}(B))$ and $(w_{i,j})_{i,j\in \nc}$ be the matrix representation of $W$, that is, $\displaystyle W(e_{i})=\sum^{\infty}_{j=1} w_{i,j}e_{j}$ for every $i\in\nc$. Then,
$\vspace{-0.6em}$
\begin{equation*}
\|W\|_{r,p}=\sup_{i\in \mathbb{N}} \sum^{\infty}_{j=1} |w_{i,j}| \frac{b_{i,r}}{a_{j,p}}
\end{equation*}
\end{enumerate}
\end{lemma}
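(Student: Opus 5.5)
The plan is to reduce every item to evaluating the operator on unit vectors or on finitely supported test vectors. Throughout I use the convention $\|S\|_{r,p}=\sup_{\|x\|_p\le 1}\|Sx\|_r$ and the fact that in $\lambda(A)$ and in $\lambda_0^\infty(A)$ every $x$ is the limit of its finite sections $\sum_{i\le N}x_ie_i$. I also use $\|e_i\|_p=a_{i,p}$. Indices $i$ with $a_{i,p}=0$ are handled by reading the quotient as $+\infty$ when the numerator is nonzero; this matches $\|S\|_{r,p}=\infty$, since $te_i$ lies in the $p$-unit ball for every $t$.

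For (1), take $x=\sum_i x_ie_i\in\lambda(A)$. By continuity of $S$ and the triangle inequality,
\[\|Sx\|_r\le\sum_i|x_i|\,\|Se_i\|_r=\sum_i|x_i|a_{i,p}\,\frac{\|Se_i\|_r}{a_{i,p}}\le \sup_i\frac{\|Se_i\|_r}{a_{i,p}}\;\|x\|_p .\]
This gives the inequality ``$\le$''. For the reverse inequality I test on $x=e_i/a_{i,p}$, which has $\|x\|_p=1$. Item (2) is then immediate: apply (1) with $Y=\lambda(B)$ and compute $\|Ue_i\|_r=\sum_j|u_{i,j}|b_{j,r}$.

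For (4), the $j$-th coordinate of $Wx$ is $\sum_i w_{i,j}x_i$. Hence
\[\|Wx\|_r=\sup_j b_{j,r}\Bigl|\sum_i w_{i,j}x_i\Bigr|\le \sup_j\sum_i|w_{i,j}|\frac{b_{j,r}}{a_{i,p}}\,\|x\|_p ,\]
because $|x_i|a_{i,p}\le\|x\|_p$. For the reverse inequality, fix $j$ and $N$ and take the finitely supported vector $x_i=\overline{\operatorname{sgn} w_{i,j}}/a_{i,p}$ for $i\le N$ and $x_i=0$ otherwise. This vector lies in $\lambda_0^\infty(A)$ with $\|x\|_p\le1$ and realizes the partial sum $\sum_{i\le N}|w_{i,j}|b_{j,r}/a_{i,p}$. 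Letting $N\to\infty$ and then taking the supremum over $j$ gives equality. The formula in the statement is this one after relabelling the roles of $i$ and $j$ in the matrix convention.

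For (3), the same computation gives
\[\|Vx\|_r=\sum_j b_{j,r}\Bigl|\sum_i v_{i,j}x_i\Bigr|\le\sum_i\sum_j|v_{i,j}|\frac{b_{j,r}}{a_{i,p}}\,\|x\|_p .\]
I read the statement as having $q$ replaced by $r$ and $p$ in the appropriate places. This bound is the main obstacle, because for a $c_0$-to-$\ell^1$ type norm the double sum is in general only an upper bound: a single sign vector $x$ cannot make every inner sum $|\sum_i v_{i,j}x_i|$ equal to $\sum_i|v_{i,j}|x_i$ simultaneously. I would try two routes. First, I would choose random signs $x_i=\varepsilon_i/a_{i,p}$ on a finite set and average. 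Khintchine's inequality then bounds $\|V\|_{r,p}$ from below by a constant times $\sum_j b_{j,r}\bigl(\sum_i|v_{i,j}|^2/a_{i,p}^2\bigr)^{1/2}$. Second, and this is what I expect to be needed for the later applications, I would note that the lemma is applied only to operators with at most one nonzero entry per row or column, such as quasi-diagonal operators and the products of two of them. For such matrices the inner sums collapse to a single term, and the choice $x_i=\overline{\operatorname{sgn} v_{i,j}}/a_{i,p}$ gives exact equality. So I would prove the upper bound in general, prove equality in the quasi-diagonal case, and note that in the applications only seminorm estimates up to constants $C_k$ are used, where the upper bound together with the quasi-diagonal equality suffices.
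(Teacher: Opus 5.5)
For (1), (2) and (4) you follow the paper. You expand $x=\sum_i x_ie_i$ and use the triangle inequality for the upper bound, then test on $e_i/a_{i,p}$ or on finitely supported sign vectors for the lower bound. In (4) you also supply the lower-bound justification that the paper only asserts. You are right that the stated formula matches the transposed convention $(Wx)_i=\sum_j w_{i,j}x_j$, to which the paper's own proof silently switches.

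On (3) you are right and the statement is wrong: the equality fails in general, so no proof of it exists. The paper's proof simply asserts $\sup_{\|x\|_p\le 1}\sum_i b_{i,r}\bigl|\sum_j v_{i,j}x_j\bigr|=\sum_i\sum_j|v_{i,j}|b_{i,r}/a_{j,p}$. That is exactly the simultaneous sign alignment you observe is impossible. For instance, take $\lambda^\infty_0(A)=c_0$ and $\lambda(B)=\ell^1$ (all $a_{i,k}=b_{j,k}=1$), and $Vx=(x_1+x_2)e_1+(x_1-x_2)e_2$. Then $\|Vx\|_r\le\sqrt2\,\bigl(|x_1+x_2|^2+|x_1-x_2|^2\bigr)^{1/2}\le 2\sqrt2$ whenever $\|x\|_p\le 1$, while the double sum equals $4$. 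What you do prove is the correct content of (3): the double sum is always an upper bound, with equality when every row or every column of $(v_{i,j})$ has at most one nonzero entry. The Khintchine estimate is also valid, but it only gives an $\ell^1(\ell^2)$-type lower bound.

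Where your proposal does go wrong is the closing claim that this weaker version suffices for the applications. Item (3) is never applied to quasi-diagonal operators; those computations, in Section 5, live on $\lambda(A)$ and need only (1)--(2). Instead it is applied in two places, both in the direction ``double sum $\le$ operator seminorm'', which is precisely the direction that fails:
\begin{itemize}
\item to arbitrary $Q\in L(\lambda^\infty_0(A),\lambda(B))$ in the proof of $(3)\Rightarrow(2)$ of Proposition 4.3;
\item to arbitrary $R\in L(\lambda^\infty_0(B),\lambda(C))$ in the proof of $(3)\Rightarrow(1)$ of Proposition 4.4.
\end{itemize}
This cannot be repaired with constants $C_k$ either. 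On $c_0\to\ell^1$, let the $m$-th block of a block-diagonal operator be the $2^m\times 2^m$ Walsh matrix multiplied by $2^{-3m/2}m^{-2}$. A Walsh matrix of size $N$ has $\ell^\infty\to\ell^1$ norm at most $N^{3/2}$, so the operator is continuous with norm at most $\sum_m m^{-2}$. Yet its double sum is $\sum_m 2^{m/2}m^{-2}=\infty$. So those two implications, and the cases of Theorem 4.4 that rest on them, need a genuinely different argument, which neither your proposal nor the paper supplies.
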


\begin{proof}  
We prove the assertions separately.

\smallskip
\noindent
(1)
For every \(x = (x_i)_{i\in \nc} \in \lambda(A)\), we obtain
\begin{equation*}
\begin{split}
\|Sx\|_{r}= \Biggl\| \sum^{\infty}_{i=1} x_{i}Se_i\Biggr\|_{r}\leq \sum^{\infty}_{i=1} |x_i| \|Se_{i}\|_{r} =\sum^{\infty}_{i=1} |x_{i}| a_{i,p} \frac{\|Se_{i}\|_{r}}{a_{i,p}}  \leq \sup_{i\in \mathbb{N}}\Biggl\lbrace \frac{\|Se_{i}\|_{r}}{a_{i,p}}\Biggr\rbrace \|x\|_{p}.
 \end{split}
\end{equation*}
and hence
\begin{equation}\label{AR1}
\|S\|_{r,p}=\sup_{\|x\|_{p}\leq 1} \|Sx\|_{r} \leq \sup_{i\in \mathbb{N}}\Biggl\lbrace \frac{\|Se_{i}\|_{r}}{a_{i,p}}\Biggr\rbrace.
\end{equation}
Moreover, for every $i\in \nc$, we have $\|e_{i}\|_{p}=a_{ip}$, and therefore
$$\|S\|_{r,p}=\sup_{\|x\|_{p}\leq 1} \|Sx\|_{r} \geq \Biggl\| S\left( \frac{e_{i}}{a_{i,p}}\right)\Biggr\|_{r}=\frac{\|Se_{i}\|_{r}}{a_{i,p}}.$$ Taking the supremum over $i\in \nc$ yields 
\begin{equation}\label{AR2}
\|S\|_{r,p} \geq \sup_{i\in \mathbb{N}}\Biggl\lbrace \frac{\|Se_{i}\|_{r}}{a_{i,p}}\Biggr\rbrace.
\end{equation}
Combining \eqref{AR1} and \eqref{AR2} completes the proof of (1).

\smallskip
\noindent
(2) For each $i\in \nc$, $\displaystyle Ue_{i}=\sum^{\infty}_{j=1} u_{i,j}e_{j}$ and $ \displaystyle 
\|Ue_{i}\|_{r}=\sum^{\infty}_{j=1}|u_{i,j}|b_{j,r}$. By applying part (1) to the operator U, we obtain
\begin{equation*}
\|U\|_{r,p}=\sup_{i\in \mathbb{N}}\Biggl\lbrace \frac{\|Ue_{i}\|_{r}}{a_{i,p}}\Biggr\rbrace=\sup_{i\in \nc} \Biggl\lbrace\sum^{\infty}_{j=1} |u_{i,j}| \frac{b_{j,r}}{a_{i,p}}\Biggr\rbrace.
\end{equation*}
which proves (2).

\smallskip
\noindent
(3) For every $\displaystyle x=\sum^{\infty}_{i=1} x_ie_i\in \lambda^{\infty}_{0}(A)$, we have $\vspace{-0.4em}$
\begin{equation*}
\begin{split}
Vx=\sum^{\infty}_{i=1} x_{i}Ve_{i}=\sum^{\infty}_{i=1} x_{i} \sum^{\infty}_{j=1} v_{i,j}e_{j}=\sum^{\infty}_{j=1}\left(\sum^{\infty}_{i=1}v_{i,j}x_{i}\right)e_j
\end{split}
\end{equation*}
and $\vspace{-0.4em}$
\begin{equation*}
\begin{split}
\|V\|_{r,p}&=\sup_{\|x\|_{p}\leq 1} \|Vx\|_{r} =\sup_{\|x\|_{p}\leq 1} \sum^{\infty}_{i=1} |(Vx)_i|b_{ir} \\
&=\sup_{\|x\|_{p}\leq 1} \sum^{\infty}_{i=1} \Bigl| \sum^{\infty}_{j=1} v_{i,j}x_{j}\Bigr|b_{ir}= \sum^{\infty}_{i=1} \sum^{\infty}_{j=1} |v_{ij}| \frac{b_{i,r}}{a_{j,p}}.
\end{split} 
\end{equation*}
It follows that
\begin{equation}\label{AQ11}
\|V\|_{r,p}=\sum^{\infty}_{i=1} \sum^{\infty}_{j=1} |v_{i,j}| \frac{b_{i,r}}{a_{j,p}}.
\end{equation}

\smallskip
\noindent
(4)
The proof of (4) follows the same lines as the proof of (3). Since $(w_{i,j})_{i,j\in \nc}$ is the matrix representation of $W$, $$Wx=\sum^{\infty}_{j=1}\left(\sum^{\infty}_{i=1}v_{i,j}x_{i}\right)e_j$$ 
for every $x\in \lambda^{\infty}_{0}(A)$. Then we write 
\begin{equation*}
\begin{split}
\|W\|_{r,p}&=\sup_{\|x\|_{p}\leq 1} \|Wx\|_{r} =\sup_{\|x\|_{p}\leq 1} \sup_{i\in \mathbb{N}} |(Wx)_i|b_{i,r} \\
&=\sup_{\|x\|_{p}\leq 1} \sup_{i\in \mathbb{N}} \Bigl| \sum^{\infty}_{j=1} w_{i,j}x_{j}\Bigr|b_{i,r}=\sup_{i\in \mathbb{N}} \sum^{\infty}_{j=1} |w_{i,j}| \frac{b_{i,r}}{a_{j,p}}
\end{split} 
\end{equation*}
that is,
\begin{equation*}
\|W\|_{r,p}=\sup_{i\in \mathbb{N}} \sum^{\infty}_{j=1} |w_{i,j}| \frac{b_{i,r}}{a_{j,p}}
\end{equation*}
This completes the proof.
\end{proof}

We now obtain an equivalent characterization of the tame factorization property when the initial space is $\lambda(A)$ or $\lambda^{\infty}(A)$, without imposing nuclearity on $\lambda(B)$.

\begin{pro}\label{sec:cor2}
Let $F$ be a Fr\'echet space. Then the following statements are equivalent.
\begin{enumerate}
\item $(\lambda(A),\lambda (B),F)\in \mathfrak{TF}$,
\item $(\lambda^\infty_0(A),\lambda (B),F)\in \mathfrak{TF}$,
\item There exists a nondecreasing function $S:\mathbb{N}\to\mathbb{N}$ such that for every nondecrasing $\phi:\mathbb{N}\to\mathbb{N}$ we have 
\begin{gather}\label{sec:wfac2}
\begin{split}
\exists k_0\in\mathbb{N} \quad &\forall k\ge k_0,\quad \exists q_k\in\mathbb{N}, C_{k}>0 ~\\	&
\frac{\|z\|_k}{a_{i,S(k)}}\leq C_k\max_{1\leq q\leq q_k} \Biggl\lbrace \frac{b_{jq}}{a_{i,\phi(q)}}\Biggr\rbrace\max_{1\leq q\leq q_k}\Biggl\lbrace\frac{\|z\|_q}{b_{j,\phi(q)}}\Biggr\rbrace
\end{split}\end{gather}
for every $z\in F$, $i,j\in \nc$.
\end{enumerate}
\end{pro}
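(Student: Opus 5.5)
I would prove the cycle $(1)\Rightarrow(3)\Rightarrow(2)\Rightarrow(1)$, or more conveniently $(1)\Rightarrow(3)$, $(2)\Rightarrow(3)$, $(3)\Rightarrow(1)$, $(3)\Rightarrow(2)$. For the necessity directions, I would not use Proposition~\ref{sec:prop1} with general $u',w$, since that would leave a factor $\|u'\|^*$ rather than $1/a_{i,\cdot}$. Instead I would take the specific rank one operator $T=e_i'\otimes z$, factored as $Q=e_i'\otimes e_j$ and $R=e_j'\otimes z$. Here $e_i'$ is the coordinate functional on $\lambda(A)$ (resp.\ $\lambda_0^\infty(A)$), and $e_j,e_j'$ are as in the proof of Proposition~\ref{sec:cor1}, so that $e_j'(e_j)=1$.

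In both $\lambda(A)$ and $\lambda^\infty_0(A)$ we have $\|e_i'\|^*_p=1/a_{i,p}$. Consequently
\[
\|T\|_{k,r}=\frac{\|z\|_k}{a_{i,r}},\qquad \|Q\|_{q,p}=\frac{b_{j,q}}{a_{i,p}},\qquad \|R\|_{q,p}=\frac{\|z\|_q}{b_{j,p}}.
\]
These operators lie in $L_\phi$ for every $\phi$. Plugging them into the necessity part of Theorem~\ref{sec:thm2} gives \eqref{sec:wfac2} at once, and the constants $k_0,q_k,C_k$ are uniform in $i,j,z$ because Theorem~\ref{sec:thm2} gives them uniformly over all $Q,R$. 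By the remark after Theorem~\ref{sec:thm2}, restricting to nondecreasing $\phi$ is harmless.

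For sufficiency, fix a nondecreasing $\phi$ and write $T=RQ$ with $Q\in L_\phi(\lambda(A),\lambda(B))$ and $R\in L_\phi(\lambda(B),F)$. Let $Q$ have matrix $(q_{i,j})$, so $Qe_i=\sum_j q_{i,j}e_j$ and $Te_i=\sum_j q_{i,j}Re_j$. By Lemma~\ref{LE1}(1),
\[
\|T\|_{k,S(k)}=\sup_i \frac{\|Te_i\|_k}{a_{i,S(k)}}\le \sup_i\sum_j |q_{i,j}|\,\frac{\|Re_j\|_k}{a_{i,S(k)}}.
\]
I would apply \eqref{sec:wfac2} with $z=Re_j$ and a modified function $\psi$, to be chosen. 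Then $\max_q \|Re_j\|_q/b_{j,\psi(q)}\le \max_q\|R\|_{q,\psi(q)}$ by Lemma~\ref{LE1}(1), and this is at most $\max_q\|R\|_{q,\phi(q)}$ as long as $\psi\ge\phi$. The remaining difficulty, and the step I expect to be the main obstacle, is the sum $\sum_j |q_{i,j}|\max_{q\le q_k} b_{j,q}/a_{i,\psi(q)}$. A supremum over $j$ of $b_{j,q}/a_{i,\cdot}$ is not enough; we need this sum to be controlled by $\max_q\|Q\|_{q,\phi(q)}$, where Lemma~\ref{LE1}(2) gives $\|Q\|_{q,p}=\sup_i\sum_j|q_{i,j}|b_{j,q}/a_{i,p}$.

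This is exactly why $\psi=\phi$ should suffice without nuclearity, since here the sum over $j$ is already built into the $\ell^1$-type norm of $\lambda(B)$. Using $\max_q\le\sum_q$, we get
\[
\sum_j |q_{i,j}|\max_{q\le q_k}\frac{b_{j,q}}{a_{i,\phi(q)}}\le \sum_{q=1}^{q_k}\sum_j|q_{i,j}|\frac{b_{j,q}}{a_{i,\phi(q)}}\le q_k\max_{q\le q_k}\|Q\|_{q,\phi(q)}.
\]
Putting the pieces together yields $\|T\|_{k,S(k)}\le C_kq_k\max_q\|Q\|_{q,\phi(q)}\max_q\|R\|_{q,\phi(q)}$, and Theorem~\ref{sec:thm2} then gives (1).

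For (2), the same computation works with $\lambda^\infty_0(A)$ in place of $\lambda(A)$. Operators $Q$ from $\lambda^\infty_0(A)$ into $\lambda(B)$ have the norm from Lemma~\ref{LE1}(3), which is a double sum over $i,j$ and dominates the supremum over $i$ used above. For $T$ itself, I would bound $\|Tx\|_k\le\sum_i|x_i|\,\|Te_i\|_k$ and then use $\sum_i$ in place of $\sup_i$. The same termwise estimate from \eqref{sec:wfac2} applies, and the resulting double sum is bounded by $\sum_q\|Q\|_{q,\phi(q)}$ via Lemma~\ref{LE1}(3). I would need to check carefully the index conventions in Lemma~\ref{LE1}(3), which look inconsistent in the statement.
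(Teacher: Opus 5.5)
\textbf{Directions that match the paper.} Your arguments for $(1)\Rightarrow(3)$, $(2)\Rightarrow(3)$ and $(3)\Rightarrow(1)$ are the paper's. Its appeal to Proposition~\ref{sec:prop1} with coordinate functionals and unit vectors is exactly your rank-one factorization. Your step $\sum_j|q_{i,j}|\max_{q\le q_k}(\cdots)\le q_k\max_{q\le q_k}\|Q\|_{q,\phi(q)}$ is more careful than the paper, which moves $\max_q$ outside $\sum_j$ and drops the factor $q_k$ (harmless, since $C_k$ absorbs it). This direction works because Lemma~\ref{LE1}(2) is exact and solid: for $\lambda(A)\to\lambda(B)$ the seminorm depends only on the $|q_{i,j}|$.

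\textbf{The gap: $(3)\Rightarrow(2)$.} After splitting $|(Qx)_j|\le\sum_i|u_{i,j}||x_i|$, you must bound $\sum_{i,j}|u_{i,j}|\,b_{j,q}/a_{i,\phi(q)}$ by a multiple of $\|Q\|_{q,\phi(q)}$, and you take this from Lemma~\ref{LE1}(3). For $Q:\lambda^\infty_0(A)\to\lambda(B)$, however, only $\|Q\|_{q,p}\le\sum_{i,j}|u_{i,j}|b_{j,q}/a_{i,p}$ holds. The asserted equality fails, because no single $x$ with $|x_i|\le 1/a_{i,p}$ makes every sum $\sum_i u_{i,j}x_i$ cancellation-free at once. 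The paper's own proof of $(3)\Rightarrow(2)$ uses the lemma in exactly the same way, so following the paper does not rescue you. The problem is not the index conventions you noticed.

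Take constant Köthe matrices, so that $\lambda^\infty_0(A)=c_0$ and $\lambda(B)=\ell^1$.
\begin{itemize}
\item The operator with $Qe_1=e_1+e_2$, $Qe_2=e_1-e_2$ and $Qe_i=0$ for $i\ge 3$ has norm at most $2\sqrt2$, but its double sum is $4$.
\item For $N\times N$ Sylvester--Hadamard matrices one has $\|H_N\|_{\ell^\infty_N\to\ell^1_N}\le N^{3/2}$. Hence the block-diagonal operator $\bigoplus_n 2^{-3n/2}n^{-2}H_{2^n}$ is continuous from $c_0$ to $\ell^1$, while its double sum is $\sum_n 2^{n/2}n^{-2}=\infty$.
\end{itemize}
With $F=\lambda(B)$ and $R$ the identity, the quantity $\sum_{i,j}|u_{i,j}|\,\|Re_j\|_k/a_{i,S(k)}$ that your estimate produces is then $+\infty$ for an admissible $Q\in L_\phi$. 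So your bound on $\|T\|_{k,S(k)}$ proves nothing there.

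The lower bounds that do hold are $\|Q\|_{q,p}\ge\sup_j\sum_i|u_{i,j}|b_{j,q}/a_{i,p}$ and $\|Q\|_{q,p}\ge\sup_i\sum_j|u_{i,j}|b_{j,q}/a_{i,p}$. Neither controls the double sum. Because the $c_0\to\ell^1$ norm is not solid in the matrix entries, no argument that first replaces $(Qx)_j$ by $\sum_i|u_{i,j}||x_i|$ can close this implication.

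\textbf{What is needed.} You need a genuinely different idea that keeps the sums $(Qx)_j$ intact. Alternatively, you could add a hypothesis such as nuclearity of $\lambda(A)$: then $\lambda(A)=\lambda^\infty_0(A)$ topologically, and $(2)$ reduces to $(1)$.
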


\begin{proof}
The implications $(1)\Rightarrow (3)$ and $(2)\Rightarrow (3)$ follow from Proposition \ref{sec:prop1} by taking $u'=e_j'$ for every $j\in \nc$. 

Next, we will prove the implication $(3)\Rightarrow (1)$. Suppose (3) holds. Then there exists a nondecreasing function $S:\mathbb{N}\to\mathbb{N}$ such that for every nondecreasing function $\phi:\mathbb{N}\to\mathbb{N}$ the condition \ref{sec:wfac2} holds for every $z\in F$, $i,j\in \nc$. 

To show that $(\lambda(A), \lambda(B), F)$ has tame factorization property, we will apply Theorem \ref{sec:thm2}.  
 
Let $\phi:\nc \to \nc$ be an arbitrary function and $T=RQ\in L^{\lambda(B)}(\lambda(A),F)$. 

Let $(u_{i,j})_{i,j\in \nc}$ be the matrix representation of the operator $Q\in (\lambda(A),\lambda(B))$, let $Re_j=z_j$ for all $j\in \nc$. Then  we have
\[Qe_i=\sum_{j\in\nc}u_{i,j}e_j\quad \textrm{and \quad}Te_i=RQe_i=\sum_{j\in\nc} u_{i,j}z_j.\]
Using Lemma \ref{LE1} we can write:
\begin{align}
	\|Q\|_{q,\phi(q)}&= \sup_{i\in \nc} \Biggl\lbrace\sum_{j\in\nc} |u_{i,j}| \frac{b_{j,q}}{a_{i,\phi(q)}}\Biggr\rbrace \label{A1}\\
	\|R\|_{q,\phi(q)}&= \sup_{j\in \mathbb{N}}\Biggl\lbrace \frac{\|Re_j\|_{q}}{b_{j,\phi(q)}}\Biggr\rbrace =\sup_{j\in \mathbb{N}}\Biggl\lbrace \frac{\|z_{j}\|_{q}}{b_{j,\phi(q)}}\Biggr\rbrace \label{A2}\\
	\|T\|_{k,S(k)}&=\sup_{i\in \mathbb{N}}\Biggl\lbrace \frac{\|Te_i\|_{k}}{a_{i,S(k)}}\Biggr\rbrace=\sup_{i\in \nc} \Biggl\lbrace\Big\|\sum_{j\in \nc} u_{i,j} \frac{z_{j}}{a_{i,S(k)}}\Big\|_k\Biggr\rbrace \label{A3}
\end{align}
By the assumption (3) and norms in \eqref{A1}, \eqref{A2}, and \eqref{A3}, we obtain
\begin{align*}
	\|T\|_{k,S(k)} & =\sup_{i\in \nc} \Biggl\lbrace\Big\|\sum_{j\in \nc} u_{i,j} \frac{z_{j}}{a_{i,S(k)}}\Big\|_k\Biggr\rbrace \leq \sup_{i\in \nc} \Biggl\lbrace\sum_{j\in \nc} |u_{i,j}| \frac{\|z_{j}\|_k}{a_{i,S(k)}}\Biggr\rbrace\\
	& \leq  C_k \sup_{i\in\nc} \Biggl\lbrace \sum_{j\in\nc}|u_{i,j}|\max_{1\leq q\leq q_k}\Bigl\lbrace\frac{b_{j,q}}{a_{i,\phi(q)}}\Bigr\rbrace \max_{1\leq q\leq q_k} \Bigl\lbrace\frac{\|z_j\|_q}{b_{j,\phi(q)}}\Bigr\rbrace\Biggr\rbrace\\
	& \leq C_k\max_{1\leq q\leq q_k} \|R\|_{q,\phi(q)}\max_{1 \leq q\leq q_k} \Biggl\lbrace \sup_{i\in \mathbb{N}}\sum_{i\in\nc} |u_{i,j}|\frac{b_{j,q}}{a_{i,\phi(q)}} \Biggr\rbrace \\ 
	& \leq  C_k \max_{1\leq q\leq q_k} \|R\|_{q,\phi(q)}\max_{1\leq q\leq q_k} \|Q\|_{q,\phi(q)} 
\end{align*}
Therefore $(\lambda(A),\lambda (B),F)\in \mathfrak{TF}$.

Finally, we prove that $(3) \Rightarrow (2)$. In this step, we essentially follow the same approach as above. Suppose (3) holds.  Then there exists a nondecreasing function $S:\mathbb{N}\to\mathbb{N}$ such that for every nondecreasing function $\phi:\mathbb{N}\to\mathbb{N}$ the condition \ref{sec:wfac2} holds for $z\in F$, $i,j\in \nc$. 

To show that $(\lambda^{\infty}_{0}(A), \lambda(B), F)$ has tame factorization property, we will apply Theorem \ref{sec:thm2}. 

Let $\phi:\nc \to \nc$ be an arbitrary function and $T=RQ\in L^{\lambda(B)}(\lambda^\infty_0(A),F)$. 

Let $(u_{i,j})_{i,j\in \nc}$ be the matrix representation of the operator $Q\in (\lambda^\infty_0(A),\lambda(B))$, let $Re_j=z_j$ for all $j\in \nc$. Then $\displaystyle Qe_i=\sum_{j\in\nc}u_{i,j}e_j$ and we have:
\[
Tx=\sum_{i\in\nc}x_iTe_i=\sum_{i\in\nc}\sum_{j\in\nc}x_iu_{i,j}z_j.
\]
Using Lemma \ref{LE1} we can write:
\begin{align}
	\|Q\|_{q,\phi(q)}&= \sum_{i\in \nc} \sum_{j\in\nc} |u_{i,j}| \frac{b_{j,q}}{a_{i,\phi(q)}} \label{B1}\\
	\|R\|_{q,\phi(q)}&= \sup_{j\in \mathbb{N}}\Biggl\lbrace \frac{\|Re_j\|_{q}}{b_{j,\phi(q)}}\Biggr\rbrace =\sup_{j\in \mathbb{N}}\Biggl\lbrace \frac{\|z_{j}\|_{q}}{b_{j,\phi(q)}}\Biggr\rbrace \label{B2}
\end{align}
Since $\displaystyle \|x\|_r=\sup_{i\in\nc}|x_i|a_{i,r}<\infty$ and $\displaystyle \lim_{i\to\infty}|x_i|a_{i,r}=0$ for every $x=(x_{i})_{i\in \nc}\in \lambda^{\infty}_{0}(A)$ and  every $r\in \nc$, we have:
\begin{align*}
	\|Tx\|_{k}&  \leq \sum_{i\in\nc} \sum_{j\in\nc} |x_{i}||u_{i,j}| \|z_j\|_{k} = \sum_{i\in\nc}\sum_{j\in\nc}  |x_i|a_{i,S(k)}|u_{ij}| \frac{\|z_j\|_{k}}{a_{i,S(k)}} \\
	& \leq  \|x\|_{S(k)} \sum_{i\in\nc}\sum_{j\in\nc}  |u_{i,j}| \frac{\|z_j\|_{k}}{a_{i,S(k)}}.
\end{align*}
Using this along with our assumption and the norms in \ref{B1} and \ref{B2} we get:
\begin{align*}
	\|T\|_{k,S(k)} & \leq  \sum^{\infty}_{i=1} \sum_{j=1}^\infty |u_{i,j}|\frac{\|z_j\|_k}{a_{i,S(k)}}\\
	& \leq  C_k \sum^{\infty}_{i=1}  \sum_{j=1}^\infty |u_{i,j}|\max_{1\leq q\leq q_k}\Bigl\lbrace\frac{b_{j,q}}{a_{i,\phi(q)}}\Bigr\rbrace \max_{1\leq q\leq q_k} \Bigl\lbrace\frac{\|z_j\|_q}{b_{j,\phi(q)}}\Bigr\rbrace\\
	& \leq C_k \max_{1\leq q\leq q_{k}} \sup_{j\in \nc} \Biggl\lbrace\frac{\|z_j\|_q}{b_{j,\phi(q)}}\Biggr\rbrace  \max_{1 \leq q\leq q_k} \Biggl\lbrace \sum^{\infty}_{i=1}\sum_{j=1}^\infty |u_{i,j}|\frac{b_{j,q}}{a_{i,\phi(q)}} \Biggr\rbrace \\ 
	& \leq  C_k \max_{1\leq q\leq q_k} \|R\|_{q,\phi(q)} \max_{1\leq q\leq q_k} \|Q\|_{q,\phi(q)}
\end{align*}
Therefore by Theorem \ref{sec:thm2},  $(\lambda^{\infty}_{0}(A),\lambda (B),F)$ has tame factorization property.
\end{proof}

We now consider the case where the initial space $E$ is an arbitrary Fréchet space and the remaining spaces are Köthe spaces.

\begin{pro} Let $E$ be a Fr\'echet space. Then the following statements are equivalent: 
	\begin{enumerate}
		\item $(E,\lambda^\infty_0(B),\lambda (C))\in \mathfrak{TF}$
		\item $(E,\lambda^\infty_0(B),\lambda^\infty (C))\in \mathfrak{TF}$
		\item  There exists a nondecreasing function $S:\mathbb{N}\to\mathbb{N}$ such that for every nondecrasing $\phi:\mathbb{N}\to\mathbb{N}$ we have
\begin{gather}\label{sec:wfac3}
\begin{split}
\exists k_0\in\mathbb{N} &\quad \forall k\ge k_0,\quad \exists q_k\in\mathbb{N}, C_{k}>0 ~\\	&
	\|u'\|^*_{S(k)}c_{j,k}\leq C_k\max_{1\leq q\leq q_k}\Biggl\lbrace \frac{c_{j,q}}{b_{i,\phi(q)}}\Biggr\rbrace\max_{1\leq q\leq q_k}\Bigl\lbrace b_{i,q}\|u^{\prime}\|^*_{\phi(q)}\Bigr\rbrace
\end{split}
\end{gather}
for every $u'\in E'$, $i,j\in \nc$. 
	\end{enumerate}
\end{pro}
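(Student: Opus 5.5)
We will follow the scheme of the proof of Proposition~\ref{sec:cor2}, this time dualizing on the target side. The implications $(1)\Rightarrow(3)$ and $(2)\Rightarrow(3)$ come from Proposition~\ref{sec:prop1}. We take $w=e_i\in\lambda^\infty_0(B)$, $w'=e_i'$ and $z=e_j$ in $\lambda(C)$ (respectively $\lambda^\infty(C)$). Then $\|e_i\|_q=b_{i,q}$, $\|e_i'\|^*_{\phi(q)}=1/b_{i,\phi(q)}$ and $\|e_j\|_k=c_{j,k}$, and the inequality of Proposition~\ref{sec:prop1} becomes exactly \eqref{sec:wfac3}. The only point to check is the dual norm of $e_i'$ on $\lambda^\infty_0(B)$, which is $1/b_{i,p}$ since $|x_i|b_{i,p}\le\|x\|_p$ with equality at $e_i/b_{i,p}$.

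For $(3)\Rightarrow(1)$ we apply Theorem~\ref{sec:thm2}. Fix $\phi$ (nondecreasing, by the remark after Theorem~\ref{sec:thm2}) and let $T=RQ$ with $Q\in L_\phi(E,\lambda^\infty_0(B))$ and $R\in L_\phi(\lambda^\infty_0(B),\lambda(C))$. Write $u_i'=e_i'\circ Q\in E'$, so that $Qx=\sum_i u_i'(x)e_i$; this converges in $\lambda^\infty_0(B)$ since the unit vectors form a basis there. Let $(r_{i,j})$ be the matrix of $R$, so $Re_i=\sum_j r_{i,j}e_j$. Then
\[
Tx=\sum_j\Bigl(\sum_i r_{i,j}u_i'(x)\Bigr)e_j,\qquad \|Tx\|_k\le\sum_j\sum_i |r_{i,j}|\,|u_i'(x)|\,c_{j,k},
\]
and hence
\[
\|T\|_{k,S(k)}\le\sum_i\sum_j|r_{i,j}|\,\|u_i'\|^*_{S(k)}c_{j,k}.
\]
Applying \eqref{sec:wfac3} with $u'=u_i'$ bounds each term by
\[
C_k|r_{i,j}|\max_{q\le q_k}\frac{c_{j,q}}{b_{i,\phi(q)}}\,\max_{q\le q_k}b_{i,q}\|u_i'\|^*_{\phi(q)}.
\]
The second factor is at most $\max_{q\le q_k}\|Q\|_{q,\phi(q)}$, because $\|u_i'\|^*_{p}\,b_{i,q}=\sup_{\|x\|_p\le1}|(Qx)_i|b_{i,q}\le\|Q\|_{q,p}$ for the sup-norm on $\lambda^\infty_0(B)$. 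The remaining sum $\sum_i\sum_j|r_{i,j}|c_{j,q}/b_{i,\phi(q)}$ is exactly $\|R\|_{q,\phi(q)}$ by Lemma~\ref{LE1}(3). We bound the max over $q$ by the sum over $q\le q_k$, at the cost of a factor $q_k$. This gives \eqref{sec:tamefactorineq} with constant $q_kC_k$.

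For $(3)\Rightarrow(2)$ the argument is the same with $\lambda^\infty(C)$, where $\|Tx\|_k=\sup_j|\sum_i r_{i,j}u_i'(x)|c_{j,k}$. Here we need the analogue of Lemma~\ref{LE1}(4) with the indices oriented correctly, namely
\[
\|R\|_{q,p}=\sup_j\sum_i|r_{i,j}|\,c_{j,q}/b_{i,p}.
\]
Then $\|T\|_{k,S(k)}\le\sup_j\sum_i|r_{i,j}|\,\|u_i'\|^*_{S(k)}c_{j,k}$, and the same termwise estimate closes the argument.

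The main obstacle is getting these Köthe-matrix formulas right. The statement of Lemma~\ref{LE1}(3),(4) has its indices somewhat garbled, so we would rederive the norm of an operator from $\lambda^\infty_0(B)$ into $\lambda(C)$ or $\lambda^\infty(C)$ directly. For $\lambda(C)$ we use duality, $\lambda^\infty_0(B)'=\lambda^1$-type weights. We must also confirm that the interchange of sums is legitimate, which holds since all terms are nonnegative.
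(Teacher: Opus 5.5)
Your implications $(1)\Rightarrow(3)$ and $(2)\Rightarrow(3)$ are fine, and so is $(3)\Rightarrow(2)$. Your two corrections are also right: the extra factor $q_k$ from bounding $\max_q$ by $\sum_{q\le q_k}$, and the orientation $\|R\|_{q,p}=\sup_j\sum_i|r_{i,j}|c_{j,q}/b_{i,p}$ for $R:\lambda^\infty_0(B)\to\lambda^\infty(C)$.

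The gap is in $(3)\Rightarrow(1)$. For $R\in L(\lambda^\infty_0(B),\lambda(C))$ the identity
\[
\|R\|_{q,p}=\sum_i\sum_j |r_{i,j}|\,\frac{c_{j,q}}{b_{i,p}}
\]
is false. Only $\|R\|_{q,p}\le\sum_i\sum_j|r_{i,j}|c_{j,q}/b_{i,p}$ holds, and your estimate needs the reverse inequality. In $\sup_{\|x\|_p\le 1}\sum_j\bigl|\sum_i r_{i,j}x_i\bigr|c_{j,q}$ you cannot choose the phases of the $x_i$ so as to remove cancellation in every column $j$ at once.

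For example, take $b_{1,p}=b_{2,p}=c_{1,q}=c_{2,q}=1$, $Re_1=e_1+e_2$, $Re_2=e_1-e_2$ and $Re_i=0$ for $i\ge 3$. Then $\|Rx\|_q=|x_1+x_2|+|x_1-x_2|\le 2\sqrt2\,\|x\|_p$, while the double sum is $4$. With $N\times N$ Hadamard blocks the ratio is at least $\sqrt N$, so even an inequality up to a constant fails. Rescaled block-diagonal sums of such blocks (e.g. $c_0\to\ell^1$) give continuous $R$ whose double sum is infinite.

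Rederiving the norm via duality, as you propose, does not help. You get $\|R\|_{q,p}=\sup_{|s_j|\le c_{j,q}}\sum_i\bigl|\sum_j r_{i,j}s_j\bigr|/b_{i,p}$, which has the same $\ell^\infty\to\ell^1$ form. This is exactly the step the paper's own proof takes through Lemma~\ref{LE1}(3), whose proof makes the same illegitimate interchange of $\sup$ and absolute values. So following the paper does not close the gap.

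What is valid are the two single-index lower bounds:
\[
\sup_i\sum_j|r_{i,j}|c_{j,q}/b_{i,p}\le\|R\|_{q,p}\quad\text{(test on $e_i/b_{i,p}$)},
\qquad
\sup_j\sum_i|r_{i,j}|c_{j,q}/b_{i,p}\le\|R\|_{q,p}.
\]
The second is why $(3)\Rightarrow(2)$ works. To use the first in your termwise estimate, you would have to turn $\sup_i$ into $\sum_i$, which needs summability.

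\emph{If $\lambda(B)$ is nuclear}, apply (3) with a nondecreasing $\psi\ge\phi$ such that $D(q)=\sum_i b_{i,\phi(q)}/b_{i,\psi(q)}<\infty$, as in Proposition~\ref{sec:cor1}. Then $\sum_i\sum_j|r_{i,j}|c_{j,q}/b_{i,\psi(q)}\le D(q)\|R\|_{q,\phi(q)}$, and $b_{i,q}\|u_i'\|^*_{\psi(q)}\le\|Q\|_{q,\phi(q)}$, so your computation goes through.

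\emph{If $\lambda(C)$ is nuclear}, then $\|\cdot\|_{1,k}\le D_k\|\cdot\|_{\infty,\psi(k)}$, and (1) follows from (2) with $S\circ\psi$ in place of $S$.

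Without such a hypothesis, the triangle-inequality bound $\|Tx\|_k\le\sum_i\sum_j|r_{i,j}||u_i'(x)|c_{j,k}$ discards exactly the cancellation that keeps $\|R\|_{q,\phi(q)}$ finite. So $(3)\Rightarrow(1)$ needs a genuinely different idea; your argument as written does not prove it.
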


\begin{proof} The implications $(1)\Rightarrow (3)$ and $(2)\Rightarrow (3)$ follow from Proposition~\ref{sec:prop1} for every $u'\in E^{\prime}$ and $z=e_{j}$ for every $j\in \nc$. 
	
We will now establish the implication  $(3)\Rightarrow (1)$. 
The assumption (3) gives us that there exists a nondecreasing function $S:\mathbb{N}\to\mathbb{N}$ such that  for every nondecreasing function  $\phi:\mathbb{N}\to\mathbb{N}$ 
the condition \ref{sec:wfac3} holds
for every $u'\in E'$, $i,j\in \nc$. 

We show that the same function $S:\mathbb{N}\to\mathbb{N}$ has the following property: for every function $\phi:\mathbb{N}\to\mathbb{N}$ there exists $k_0\in\mathbb{N}$ such that, for all $k\ge k_0$, one can find $q_k\in\mathbb{N}$ and $C_k>0$ for which
\begin{equation}\label{sec:tamefactorineq}
	\|T\|_{k,S(k)} 
	\le C_k 
	\max_{1\le q\le q_k} \|R\|_{q,\phi(q)} 
	\max_{1\le q\le q_k} \|Q\|_{q,\phi(q)},
\end{equation}
holds whenever $T=RQ\in L^G(E,F)$.

Let $\phi:\nc \to \nc$ be an arbitrary function and let $T\in L(E,\lambda(C))$  be an operator that admits a factorization $T=RQ$ for some $Q\in L(E,\lambda^{\infty}_{0}(B))$ and  $R\in L(\lambda^{\infty}_{0}(B), \lambda(C))$. Let $(w_{i,j})_{i,j\in \nc}$ be the matrix representation of the operator $R$. For every $i\in \nc$, we define $u_i'\in E'$ by $u'_i:=e_i'\circ Q$. Since $e'_i(y)=y_i$ for any $y=(y_i)_{i\in\nc}$, we have:
\begin{gather}\label{ops1}
	Qx=(u'_i(x))_{i\in\nc}=\sum_{i\in\nc} u'_{i}(x)e_{i}, \quad 	Ry=\sum_{i\in\nc}\sum_{j\in\nc}y_iw_{i,j}e_j 
\end{gather}
for $x\in E$, $y=(y_i)_{i\in\nc}\in\lambda^\infty_0(B)$. Using Lemma \ref{LE1}, we can write the norms
\begin{align*}
	\|Q\|_{q,\phi(q)}&=\sup_{\|x\|_{\phi(q)}\leq 1} \sup_{i\in \mathbb{N}} |u'_i(x)|b_{i,q} =\sup_{i\in\nc}\|u'_i\|^*_{\phi(q)}b_{i,q}\\
	\|R\|_{q,\phi(q)}&= \sum_{i\in \nc} \sum_{j\in\nc} |w_{i,j}| \frac{c_{iq}}{b_{j\phi(q)}}
\end{align*}
Since $T=RQ$, we have:
\begin{gather}\label{ops2}Tx=\sum_{i\in\nc} u'_{i}(x)R(e_{i})=\sum_{i\in\nc}\sum_{j\in\nc} u'_i(x)w_{i,j}e_j\end{gather}
for every $x\in E$. For every $\|x\|_{S(k)}\leq 1$ we get:
\begin{align*}
	\|Tx\|_{k,S(k)} & \leq \sup_{\|x\|_{S(k)}\leq 1}\sum_{i\in\nc}\sum_{j\in\nc} |u'_i(x)||w_{i,j}|c_{jk}\leq  \sum_{i\in\nc}\sum_{j\in\nc} \|u'_i\|^*_{S(k)}|w_{i,j}|c_{j,k}\\
	& \leq  C_k\sum_{i\in\nc} \sum_{j\in\nc}|w_{i,j}|\max_{1\leq q\leq q_k}\Bigl\lbrace \frac{c_{j,q}}{b_{i,\phi(q)}}\Bigl\rbrace\max_{1\leq q\leq q_k}\Bigl\lbrace b_{i,q}\|u'_i\|^*_{\phi(q)}\Bigr\rbrace \\
	& \leq  C_k\max_{1\leq q\leq q_k}\sup_{i\in \nc} \Bigl\lbrace b_{i,q}\|u'_i\|^*_{\phi(q)}\Bigr\rbrace  \max_{1\leq q\leq q_k}\sum_{j=1}^\infty \sum_{i=1}^\infty|w_{i,j}|\frac{c_{j,q}}{b_{i,\phi(q)}}	\\
	& \leq  C_k\max_{1\leq q\leq q_k}\|R\|_{q,\phi(q)}\max_{1\leq q\leq q_{k}}  \|Q\|_{q,\phi(q)}
\end{align*}
Theorem \ref{sec:thm2} implies that $(E,\lambda^\infty_0(B),\lambda (C))$ has tame factorization property.

Finally, we prove that $(3) \Rightarrow (2)$. In this case, we essentially follow the same approach as above.
The assumption (3) gives us that there exists a nondecreasing function $S:\mathbb{N}\to\mathbb{N}$ such that  for every nondecreasing function  $\phi:\mathbb{N}\to\mathbb{N}$ 
the condition \ref{sec:wfac3} holds
for every $u'\in E'$, $i,j\in \nc$. 

Let $\phi:\nc \to \nc$ be an arbitrary function and let $T\in L(E, \lambda(C))$ be an operator satisfying $T=RQ$ for some $Q\in L(E,\lambda^{\infty}_{0}(B))$ and  $R\in L(\lambda^{\infty}_{0}(B),\lambda^{\infty}(C))$. Let $(w_{i,j})_{i,j\in \nc}$ be the matrix representation of the operator $R$, we define $u_i'\in E'$ by $u'_i:=e_i'\circ Q$ for every $i\in \nc$. Proceeding as above, for $x\in E$ and $y\in \lambda^\infty_0(B)$ we have
\begin{gather*}
	Qx=\sum_{i\in\nc} u'_{i}(x)e_{i}, \quad 	Ry=\sum_{i\in\nc}\sum_{j\in\nc}y_iw_{i,j}e_j, \quad Tx=\sum_{i\in\nc}\sum_{j\in\nc} u'_i(x)w_{i,j}e_j.
\end{gather*}
Using Lemma \ref{LE1} the following
\begin{gather*}
	\|R\|_{q,\phi(q)}=\sup_{i\in \mathbb{N}} \sum^{\infty}_{j=1} |w_{i,j}| \frac{c_{i,q}}{b_{j,\phi(q)}},\quad \|Q\|_{q,\phi(q)}=\sup_{i\in\nc} \Bigl\lbrace b_{i,q}\|u'_i\|^{*}_{\phi(q)}\Bigr\rbrace.
\end{gather*}
Now we can estimate the norm of $T$: 
\begin{align*}
	\|T\|_{k,S(k)}&\leq \sup_{\|x\|_{S(k)}\leq 1}\Big(\sup_{j\in \nc} \sum_{i\in\nc} |u'_i(x)||w_{i,j}|c_{j,k}\Big) =  \sup_{j\in \nc} \sum_{i\in\nc} \|u'_i\|^{*}_{S(k)}|w_{i,j}|c_{j,k} \\
	& \leq C_k \sup_{j\in \nc} \sum_{i\in\nc} |w_{i,j}|\max_{1\leq q\leq q_{k}}\Bigl\lbrace \frac{c_{j,q}}{b_{i,\phi(q)}}\Bigr\rbrace \max_{1\leq q\leq q_{k}} \Bigl\lbrace b_{i,q}\|u'_{i}\|^{*}_{\phi(q)}\Bigr\rbrace  \\
	& \leq C_k\max_{1\leq q\leq q_{k}} \left(\sup_{i\in \mathbb{N}}\Bigl\lbrace b_{i,q}\|u'_{i}\|^{*}_{\phi(q)}\Bigr\rbrace\right) \max_{1\leq q\leq q_{k}} \sup_{j\in \nc}\sum_{i\in\nc} |w_{i,j}|\frac{c_{j,q}}{b_{i,\phi(q)}} \\
	&=C_k \max_{1\leq q\leq q_{k}} \|Q\|_{q,\phi(q)} \max_{1\leq q\leq q_{k}}\|R\|_{q,\phi(q)}.
\end{align*}
So $(E,\lambda^{\infty}_{0}(B),\lambda(C))$ has tame factorization property by Theorem \ref{sec:thm2}.
\end{proof}

We now characterize the tame factorization property for operators between Köthe spaces factoring through an arbitrary Fréchet space.

\begin{pro} Let $G$ be a Fr\'echet space.
$(\lambda(A),G,\lambda^\infty(C))\in \mathfrak{TF}$ if and only if there exists a nondecreasing function $S:\mathbb{N}\to\mathbb{N}$ such that for every nondecrasing $\phi:\mathbb{N}\to\mathbb{N}$ we have
\begin{equation}\label{P5}
\begin{split}
\exists k_0\in\mathbb{N} &\quad \forall k\ge k_0,\quad \exists q_k\in\mathbb{N}, C_{k}>0 ~\\	&
\frac{c_{j,k}}{a_{i,S(k)}}|y'(y)|\leq C_k \max_{1\leq q\leq q_k}\Biggl\lbrace\frac{\|y\|_q}{a_{i,\phi(q)}}\Biggr\rbrace \max_{1\leq q\leq q_k} \Bigl\lbrace \|y'\|^*_{\phi(q)}c_{j,q}\Bigr\rbrace
\end{split}
\end{equation}
for every $y\in G$, $y'\in G'$, $i,j\in \nc$.
\end{pro}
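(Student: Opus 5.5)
The plan has two directions, with sufficiency following the pattern of the previous propositions.

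\emph{Necessity.} Assume $(\lambda(A),G,\lambda^\infty(C))\in\mathfrak{TF}$. Apply Proposition~\ref{sec:prop1} with $u'=e_i'\in\lambda(A)'$ and $z=e_j\in\lambda^\infty(C)$. Then $\|e_i'\|^*_r=1/a_{i,r}$ and $\|e_j\|_q=c_{j,q}$. For $y\in G$ and $y'\in G'$ with $y'(y)\neq 0$, put $w=y/y'(y)$ and $w'=y'$. The inequality of Proposition~\ref{sec:prop1} then reads
\[
\frac{c_{j,k}}{a_{i,S(k)}}\le C_k\max_{q\le q_k}\frac{\|y\|_q}{|y'(y)|\,a_{i,\phi(q)}}\max_{q\le q_k}\|y'\|^*_{\phi(q)}c_{j,q}.
\]
Multiplying by $|y'(y)|$ gives \eqref{P5}. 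When $y'(y)=0$, \eqref{P5} is trivial.

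\emph{Sufficiency.} We verify the criterion of Theorem~\ref{sec:thm2}. Fix $\phi$, which by the remark after Theorem~\ref{sec:thm2} we may take nondecreasing. Let $T=RQ$ with $Q\in L_\phi(\lambda(A),G)$ and $R\in L_\phi(G,\lambda^\infty(C))$. Set $y_i=Qe_i\in G$ and $y'_j=e_j'\circ R\in G'$, so that the matrix entries of $T$ are $t_{i,j}=(Te_i)_j=y'_j(y_i)$. By Lemma~\ref{LE1}(1),
\[
\|T\|_{k,S(k)}=\sup_i \frac{\|Te_i\|_k}{a_{i,S(k)}}=\sup_{i,j}\frac{c_{j,k}\,|y'_j(y_i)|}{a_{i,S(k)}}.
\]
Here we use that the norm on $\lambda^\infty(C)$ is a supremum, so the supremum over $j$ appears with no summation. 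Applying \eqref{P5} with $y=y_i$ and $y'=y'_j$ bounds each term by
\[
C_k\max_{q\le q_k}\frac{\|Qe_i\|_q}{a_{i,\phi(q)}}\,\max_{q\le q_k}\|y'_j\|^*_{\phi(q)}c_{j,q}.
\]
The first factor is at most $\max_{q\le q_k}\|Q\|_{q,\phi(q)}$ by Lemma~\ref{LE1}(1). For the second factor,
\[
\|y'_j\|^*_{\phi(q)}c_{j,q}=\sup_{\|y\|_{\phi(q)}\le1}|(Ry)_j|\,c_{j,q}\le\|R\|_{q,\phi(q)}.
\]
Taking the supremum over $i$ and $j$ yields exactly the inequality of Theorem~\ref{sec:thm2}, so the triple has the tame factorization property.

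No step is a real obstacle. The argument works without nuclearity because the sup-norm of $\lambda^\infty(C)$ and the $\ell^1$-structure of $\lambda(A)$ (through Lemma~\ref{LE1}(1)) turn $\|T\|_{k,S(k)}$ into a double supremum over the rank-one pieces $y'_j(y_i)$. The only points needing care are the $y'(y)=0$ case in the necessity direction and checking the bound $\|y'_j\|^*_{\phi(q)}c_{j,q}\le\|R\|_{q,\phi(q)}$.
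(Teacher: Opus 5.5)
Your proof is correct and takes essentially the same route as the paper. For necessity, you test the criterion of Theorem~\ref{sec:thm2} on rank-one factorizations through $G$: you go via Proposition~\ref{sec:prop1} with $w=y/y'(y)$, while the paper plugs $Q=e_j'\otimes y$ and $R=y'\otimes e_i$ into Theorem~\ref{sec:thm2} directly. For sufficiency, both arguments set $y_i=Qe_i$ and $y_j'=e_j'\circ R$ and combine the sup-norm of $\lambda^\infty(C)$ with the $\ell^1$-structure of $\lambda(A)$; the only difference is that you reduce $\|T\|_{k,S(k)}$ to unit vectors via Lemma~\ref{LE1}(1), whereas the paper estimates $\|Tx\|_k$ for arbitrary $x$ with $\|x\|_{S(k)}\le 1$.
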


\begin{proof} Let us assume that $(\lambda(A), G, \lambda^{\infty}(C)) \in \mathcal{TF}$ and $i,j \in \mathbb{N}$, $y \in G$ and $y' \in E'$. We define the operators
\[Q= e'_j \otimes y,\qquad R = y' \otimes e_i, \qquad \text{and} \qquad T=RQ.
\]
It is obvious that  
$T\in L(\lambda(A),\lambda^{\infty}(C))$, $R\in L(G,\lambda^{\infty}(C))$, and $Q\in L( \lambda(A),G)$. For every $x\in \lambda(A)$, we have
$$Tx=RQx=R(e'_{j}(x)y)=e'_{j}(x)R(y)=e'_{j}(x)y'(y)e_{i}$$
and 
\begin{equation*}
\begin{split}
\|T\|_{k,S(k)}&=\sup_{\|x\|_{S(k)}\leq 1}\|Tx\|_{k}=\sup_{\|x\|_{S(k)}\leq 1} \|e'_{j}(x)y'(y)e_{i}\|_{k}\\ &= \|e'_{j}\|_{S(k)}|y'(y)|\|e_{i}\|_{k}=\frac{c_{i,k}}{a_{j,S(k)}}|y'(y)|.
\end{split}
\end{equation*}
We also have
\begin{equation*}
\begin{split}
\|R\|_{q,\phi(q)}&=\sup_{j\in\nc}\frac{\|y\|_q}{a_{j,\phi(q)}}  \qquad \text{and} \qquad \|Q\|_{q,\phi(q)}=\sup_{i\in\nc}\|y'\|^*_{\phi(q)}c_{i,q}.
\end{split}
\end{equation*}
Applying Theorem \ref{sec:thm2} to the operators $T$, $R$, and $Q$ defined above yields the desired result.

Next suppose that there exists a nondecreasing function $S:\mathbb{N}\to\mathbb{N}$ such that for every nondecreasing function $\phi:\mathbb{N}\to\mathbb{N}$ the condition \ref{P5} holds for every $y\in G$, $y'\in G'$, $i,j\in \nc$. We aim to show that the triple $(\lambda(A), G, \lambda^{\infty}(C))$ has tame factorization property. Let $T\in L(\lambda(A),\lambda^{\infty}(C))$  be an arbitrary operator with $T=RQ$ for some $R\in L(G,\lambda^{\infty}(C))$ and $Q\in L(\lambda(A),G)$.

Let $\phi:\nc \to \nc$ be an arbitrary function. By \ref{P5} there exists a $k_{0}\in \nc$ corresponding to $\phi$.
Fix $k\ge k_0$. Let $y_j=Qe_j$ for every $j\in \mathbb{N}$. By  Lemma~\ref{LE1} (1), it follows that
\begin{equation}\label{E1}
\|Q\|_{q,\phi(q)}= \sup_{j\in \nc} \Biggl\lbrace \frac{\|y_{j}\|_{q}}{a_{j,\phi(q)}}\Biggr\rbrace
\end{equation}
We define $y_i'=e_i'\circ R$ for all $i\in \nc$. Then  we can write
$$Ry=((Ry)_{i})_{i\in \mathbb{N}}= (e'_{i}\circ R (y) )_{i\in \nc}=(y'_i(y))_{i\in \mathbb{N}}$$
and then it follows that
$$\|R\|_{q,\phi(q)}=\sup_{\|y\|_{\phi(q)}\leq 1}\|Ry\|_{q} = \sup_{\|y\|_{\phi(q)}\leq 1} \sup_{i\in \mathbb{N}}|y'_i(y)|c_{i,q}=\sup_{i\in\nc}\|y'_i\|^*_{\phi(q)}c_{i,q}.$$ Then, for every $j\in \nc$, we can write 
$$Te_j=RQe_j=Ry_j=(e_i'(Ry_j))_{i\in\nc}=(y_i'(y_j))_{i\in\nc}=\sum^{\infty}_{i=1} y_i'(y_j)e_{i}$$ 
and for every $x=(x_{j})_{j\in \nc}\in \lambda(A)$, we also have
$$Tx=\sum^{\infty}_{j=1}x_{j}Te_{j} =\sum^{\infty}_{j=1} x_{j}\sum^{\infty}_{i=1} y'_{i}(y_j)e_{i}=\sum^{\infty}_{i=1} \left(\sum^{\infty}_{j=1}x_{j}y'_{i}(y_{j}) \right)e_{i}.$$
For every $x=(x_j)_{j\in\nc}\in \lambda(A)$ with $\|x\|_{S(k)}\leq 1$,
\begin{align*}
\|Tx\|_k&=\sup_{i\in\nc}\Biggl|\sum_{j=1}^\infty x_jy'_i(y_j)\Biggr|c_{i,k} \leq \sup_{i\in\nc}\sum_{j=1}^\infty |x_j|a_{j,S(k)}\frac{c_{i,k}}{a_{j,S(k)}}|y'_i(y_j)|
\\
&\leq C_k\sup_{i\in\nc}\Bigl\{\max_{1\leq q\leq q_k}  \|y'_i\|^*_{\phi(q)}c_{i,q} \Bigr\} \cdot \sup_{j\in\nc}\Biggl\{\max_{1\leq q\leq q_k}\Biggl\lbrace\frac{\|y_j\|_q}{a_{j,\phi(q)}}\Biggr\rbrace\Biggr\rbrace \cdot
\sum_{j=1}^\infty|x_j|a_{j,S(k)}\\
&\leq C_k\max_{1\leq q\leq q_k}\bigl\lbrace \|Q\|_{q,\phi(q)} \bigl\rbrace \max_{1\leq q\leq q_k}\bigl\lbrace \|R\|_{q,\phi(q)}\bigl\rbrace <\infty.
\end{align*}
Therefore, we have
$$\|T\|_{k,S(k)}\leq C_k \max_{1\leq q\leq q_{k}} \bigl\lbrace \|Q\|_{q,\phi(q)}\bigl\rbrace\max_{1\leq q\leq q_{k}}\bigl\lbrace\|R\|_{q,\phi(q)}\bigl\rbrace.$$
Theorem \ref{sec:thm2} says that 
$(\lambda(A),G,\lambda^\infty(C))$ has tame factorization property.
\end{proof}

The following theorem is an immediate consequence of the above results.

\begin{theorem} \label{sec:factor1}
For K\"othe spaces $\lambda^{m_1} (A)$, $\lambda^{m_2}(B)$ and $\lambda^{m_3} (C)$, where $\lambda^{m_1},\lambda^{m_2}\in\{\lambda,\lambda^\infty_0\}$, $\lambda^{m_3}\in\{\lambda,\lambda^\infty\}$, $(\lambda^{m_1} (A), \lambda^{m_2}(B),\lambda^{m_3} (C))\in \mathfrak{TF}$ if and only if there exists a nondecreasing function $S:\mathbb{N}\to\mathbb{N}$ such that for every nondecreasing function $\phi:\mathbb{N}\to\mathbb{N}$ we have  
\begin{gather}\label{sec:kothe}
\begin{split}
\exists k_0\in\mathbb{N} &\quad \forall k\ge k_0,\quad \exists q_k\in\mathbb{N}, C_{k}>0 ~\\	&
\frac{c_{j,k}}{a_{i,S(k)}}\leq C_k\max_{1\leq q\leq q_k} \frac{b_{\nu,q}}{a_{i,\phi (q)}} \max_{1\leq q\leq q_k} \frac{c_{j,q}}{b_{\nu,\phi (q)}}\end{split}\end{gather}
for every $i,j,\nu \in \nc$.
\end{theorem}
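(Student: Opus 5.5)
Necessity is the easy half and I will get it from the rank-one test already in the paper. Assume the triple is in $\mathfrak{TF}$ and fix $i,j,\nu$. In Proposition~\ref{sec:prop1} take $u'=e_i'\in(\lambda^{m_1}(A))'$, $z=e_j\in\lambda^{m_3}(C)$, $w=e_\nu\in\lambda^{m_2}(B)$ and $w'=e_\nu'$, so that $w'(w)=1$. For all the sequence spaces allowed here the norms are $\|e_i'\|^*_r=1/a_{i,r}$, $\|e_j\|_k=c_{j,k}$, $\|e_\nu\|_q=b_{\nu,q}$ and $\|e_\nu'\|^*_r=1/b_{\nu,r}$. Substituting these, the inequality of Proposition~\ref{sec:prop1} becomes exactly \eqref{sec:kothe}.

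For sufficiency I will not redo the factorization estimates. Instead I route through the propositions already proved, splitting on the middle space. Suppose $\lambda^{m_2}=\lambda$. Then $E=\lambda^{m_1}(A)$ with $\lambda^{m_1}\in\{\lambda,\lambda^\infty_0\}$, and Proposition~\ref{sec:cor2} says that membership in $\mathfrak{TF}$ is equivalent to its condition \eqref{sec:wfac2}, for every $z\in\lambda^{m_3}(C)$. So in this case the job is to show that \eqref{sec:kothe} implies \eqref{sec:wfac2} with the same $S$ and the same $k_0,q_k$, and with a suitably enlarged constant.

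To do this, fix $z$, $i$ and $\nu$ (the index playing the role of $j$ in \eqref{sec:wfac2}). Since $\phi$ is nondecreasing and $q\le q_k$,
\[
\max_{1\le q\le q_k}\frac{c_{j,q}}{b_{\nu,\phi(q)}} \le \frac{1}{b_{\nu,\phi(q_k)}}\max_{q\le q_k} c_{j,q}\ \text{(roughly)},
\]
but that bound is not quite what is needed. The better route is coordinatewise. Write $\|z\|_k=\sum_j|z_j|c_{j,k}$ (or the supremum, in the $\lambda^\infty$ case). Multiply \eqref{sec:kothe} by $|z_j|$ and use
\[
|z_j|\max_{q\le q_k}\frac{c_{j,q}}{b_{\nu,\phi(q)}}\le \max_{q\le q_k}\frac{|z_j|c_{j,q}}{b_{\nu,\phi(q)}}.
\]
Summing (or taking suprema) over $j$ then gives
\[
\frac{\|z\|_k}{a_{i,S(k)}}\le C_k\max_q\frac{b_{\nu,q}}{a_{i,\phi(q)}}\sum_{q\le q_k}\frac{\|z\|_q}{b_{\nu,\phi(q)}}\le q_kC_k\max_q\frac{b_{\nu,q}}{a_{i,\phi(q)}}\max_q\frac{\|z\|_q}{b_{\nu,\phi(q)}},
\]
which is \eqref{sec:wfac2} with constant $q_kC_k$. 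The one thing to keep in mind is that the sum over $j$ of a maximum over $q$ must be bounded by a sum over $q$ of sums over $j$; this is where the factor $q_k$ comes from, and it is harmless.

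Now suppose $\lambda^{m_2}=\lambda^\infty_0$. Symmetrically, use the proposition for $(E,\lambda^\infty_0(B),\lambda^{m_3}(C))$ with $\lambda^{m_3}\in\{\lambda,\lambda^\infty\}$, whose condition is \eqref{sec:wfac3}, for every $u'\in(\lambda^{m_1}(A))'$. The step I expect to be the main obstacle is representing $u'$ in this dual. For $\lambda(A)$ the dual norm is $\|u'\|^*_r=\sup_i|u'(e_i)|/a_{i,r}$. For $\lambda^\infty_0(A)$ it is $\sum_i|u'(e_i)|/a_{i,r}$. In either case write $u_i=u'(e_i)$, multiply \eqref{sec:kothe} by $|u_i|$, and take the supremum or the sum over $i$ in the same way as before. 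This yields \eqref{sec:wfac3} with constant $q_kC_k$. The proposition then gives membership in $\mathfrak{TF}$, and this covers all the admissible combinations of $\lambda^{m_1},\lambda^{m_2},\lambda^{m_3}$.
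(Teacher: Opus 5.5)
Your proposal is correct and follows the paper's route. The paper only says the theorem is an immediate consequence of the preceding results: Proposition~\ref{sec:prop1} with unit vectors for necessity, and Proposition~\ref{sec:cor2} together with the proposition characterizing $(E,\lambda^\infty_0(B),\lambda^{m_3}(C))\in\mathfrak{TF}$ for sufficiency. Your step of multiplying the matrix inequality by $|z_j|$ (resp.\ $|u'(e_i)|$) and then summing or taking suprema supplies the reduction to \eqref{sec:wfac2} and \eqref{sec:wfac3} that the paper leaves implicit, at the harmless cost of the factor $q_k$.

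One caveat concerns the cases where a $\lambda^\infty_0$ space is mapped into a $\lambda$ space. There the cited propositions are proved in the paper via Lemma~\ref{LE1}(3). Its double-sum formula is in general only an upper bound for the operator seminorm, as the $2\times 2$ block with rows $(1,1)$ and $(1,-1)$ from $c_0$ to $\ell^1$ already shows, yet those proofs use it as a lower bound. So in those cases your argument is exactly as secure as the paper's and no more.
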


\section{Factorization over a Tensor Product}

We now consider triples $(E,G,F)$ of Fr\'echet spaces in which $G$ is the complete projective tensor product of K\"othe spaces $\lambda(A)$ and $\lambda(B)$. $G$ is naturally isomorphic to the K\"othe space $\lambda(C)$, where $C = \{c_{\nu,p}:p \in \mathbb{N}, \nu=(i,j)\in \nc^{2}\}$ is given by 
\begin{equation}\label{TM1}
c_{\nu,p} = a_{i,p} b_{j,p}
\end{equation}
for every $p\in \nc, \nu=(i,j)\in \nc^{2}$.

\begin{theorem}\label{sec:tensor}
	Let $(E,\lambda(A))$ and $(\lambda(B),F)$ be tame pairs. If $G$ is isomorphic to $\lambda(A)\hat{\otimes}_\pi \lambda(B)$ then $(E,G,F)$ satisfies Equation \ref{sec:wfac1}. 
\end{theorem}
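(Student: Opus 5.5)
The plan is to verify the condition \eqref{sec:wfac1} directly for the matrix $C$ of \eqref{TM1}, i.e.\ to find a nondecreasing $S$ such that for every nondecreasing $\phi$ there are $k_0$, $q_k$, $C_k$ with
\[
\|u'\|^*_{S(k)}\|z\|_k\le C_k\max_{1\le q\le q_k}\bigl\{\|u'\|^*_{\phi(q)}a_{i,q}b_{j,q}\bigr\}\max_{1\le q\le q_k}\Bigl\{\frac{\|z\|_q}{a_{i,\phi(q)}b_{j,\phi(q)}}\Bigr\}
\]
for all $u'\in E'$, $z\in F$, $(i,j)\in\nc^2$. By the remark after Theorem~\ref{sec:thm2} we may take $\phi(q)\ge q$ throughout.

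\emph{Step 1: one-sided estimates from the two tame pairs.} Since $(E,\lambda(A))\in\mathfrak{T}$, Remark~\ref{Rem1}(3) gives $(E,\lambda(A),\lambda(A))\in\mathfrak{TF}$. I would apply Proposition~\ref{sec:prop1} to this triple with $z=w=e_i$ and $w'=e_i'$. Because $\phi(q)\ge q$, the factor $\max_q a_{i,q}/a_{i,\phi(q)}$ is at most $1$, so there is $S_1$ with
\[
\|u'\|^*_{S_1(m)}a_{i,m}\le C\max_{q\le q_m}\|u'\|^*_{\phi(q)}a_{i,q}.
\]
Symmetrically, $(\lambda(B),F)\in\mathfrak{T}$ gives $(\lambda(B),\lambda(B),F)\in\mathfrak{TF}$. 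Using $u'=w'=e_j'$ and $w=e_j$ in Proposition~\ref{sec:prop1}, there is $S_2$ with
\[
\|z\|_m\le C\, b_{j,S_2(m)}\max_{q\le q_m}\frac{\|z\|_q}{b_{j,\phi(q)}}.
\]
Both inequalities hold uniformly in $i$, respectively $j$. I would apply each of them not only to $\phi$ itself but to auxiliary functions built from $\phi$, such as $\phi\circ\phi$ or $q\mapsto\phi(q)+q$, to gain room in the indices.

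\emph{Step 2: combining.} The natural candidate is $S=S_1\circ S_2$ (made nondecreasing). Put $m=S_2(k)$ and multiply the two estimates. The product of the two maxima already has the right shape, but an extra weight $b_{j,m}/a_{i,m}$ remains, and it must be absorbed into the mixed weights $a_{i,q}b_{j,q}$ and $1/(a_{i,\phi(q)}b_{j,\phi(q)})$ of the target. The device I would use is that the target right-hand side is bounded below by any single product of terms with indices $q_1,q_2\le q_k$. In particular, taking $q_1=\phi(q_2)$ makes the $b$-weights cancel and leaves $\|u'\|^*_{\phi(\phi(q_2))}\|z\|_{q_2}$. So I would enlarge $q_k$ to dominate $\phi(q_{m})$. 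Then, in Step 1, I would run the $A$-estimate with index $m=S_2(k)$ and the $B$-estimate at level $k$. The goal is to insert $a_{i,q}b_{j,q}$ and $a_{i,\phi(q)}b_{j,\phi(q)}$ in matched pairs, using the monotonicity of both Köthe matrices so that each leftover quotient is $\le 1$.

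\emph{Main obstacle.} The hard step is Step~2: matching the indices so that the stray factor $b_{j,S_2(k)}/a_{i,S_2(k)}$ disappears. The two tame pairs control the $A$-weights and the $B$-weights separately, while the tensor weights $c_{(i,j),p}=a_{i,p}b_{j,p}$ tie them to the same index $p$. My first attempt is the choice of the auxiliary function in Step 1 (replacing $\phi$ by $\phi\circ\phi$ in the $E$-estimate) together with $q_1=\phi(q_2)$. If this does not close, I would instead apply Theorem~\ref{sec:thm2} to the triples $(E,\lambda(A),\lambda(A))$ and $(\lambda(B),\lambda(B),F)$ with genuinely rank-one operators $u'\otimes(a\text{-weighted }e_i)$ and $(b\text{-weighted }e_j')\otimes z$. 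The point of the weighting is to build the $b$-weights into the $A$-estimate before multiplying.
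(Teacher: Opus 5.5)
\paragraph{Where the proposal breaks.}
Your Step~1 is correct. These are exactly the one-sided estimates the paper imports from Piszczek's Theorems~2.2 and~2.3. Step~2, however, is the whole content of the proof, and you do not carry it out; neither of your devices can.

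The choice $q_1=\phi(q_2)$ bounds the right-hand side of \eqref{sec:wfac1} from below by $\|u'\|^*_{\phi(\phi(q_2))}\|z\|_{q_2}$. For general $E,F$ this dominates $\|u'\|^*_{S(k)}\|z\|_k$ only if $q_2\ge k$ and $\phi(\phi(q_2))\le S(k)$. That is impossible once $\phi$ outgrows $S$, and $S$ must be fixed before $\phi$.

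Re-weighting the rank-one operators cannot help either. The two hypotheses relate $E$ to $A$ and $B$ to $F$ separately. Nothing links the index of $A$ with the index of $B$, or $E$ with $F$, whereas the weight $a_{i,p}b_{j,p}$ ties them to a common $p$.

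The paper's printed proof meets the same stray factor $b_{\nu,S_1(k)}/a_{\mu,S_1(k)}$. It bounds $1/a_{\mu,S_1(k)}$ and $b_{\nu,S_1(k)}$ by separate maxima and then merges $\max_q X_q\cdot\max_q Y_q$ into $\max_q X_qY_q$. That inequality goes the wrong way, so the paper offers no fix for this step.

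\paragraph{The statement is false as formulated.}
Let $E=\lambda(A)=\Lambda_\infty(\alpha)$ with $\alpha_{n+1}/\alpha_n\to\infty$, and let $\lambda(B)=F=\Lambda_1(\beta)$ with $\beta_m=m$.

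Both hypotheses hold. The pair $(\lambda(B),F)$ is tame by Vogt's theorem. The pair $(E,\lambda(A))$ is tame: for $Te_n=\sum_m t_{m,n}e_m$, split $\sum_m|t_{m,n}|e^{k\alpha_m}$ into two parts.
\begin{itemize}
\item The part $m\le n$ is at most $e^{(k-1)\alpha_n}\sum_m|t_{m,n}|e^{\alpha_m}$.
\item The part $m>n$ is at most $e^{-\alpha_{n+1}}\sum_m|t_{m,n}|e^{(k+1)\alpha_m}=O(e^{\pi_T(k+1)\alpha_n-\alpha_{n+1}})=O(1)$.
\end{itemize}
Hence $\pi_T(k)\le k+\pi_T(1)$.

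Now fix any nondecreasing $S$ and put $\phi(q)=q(S(q)+1)$. Test \eqref{sec:wfac1} with $u'=e_i'$, $z=e_m$, $m=\lceil M\alpha_i\rceil$ and $\nu=(i,1)$, so that $c_{\nu,q}=e^{q\alpha_i-1/q}$.
\begin{itemize}
\item The first maximum is at most $1$.
\item The second maximum is at most $e\cdot\max_{q\le q_k}e^{-\alpha_i(M/q+\phi(q))}$.
\item The left side is at least $e^{-1}e^{-\alpha_i(S(k)+M/k)}$.
\end{itemize}
Letting $i\to\infty$ therefore forces $\min_{q\le q_k}(M/q+\phi(q))\le S(k)+M/k$.

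Take $M=k(k-1)S(k)+1$. Then every $q$ violates this inequality:
\begin{itemize}
\item for $q=k$, because $\phi(k)>S(k)$;
\item for $q<k$, because $M(1/q-1/k)\ge M/(k(k-1))>S(k)$;
\item for $q>k$, because $\phi(q)-M/k\ge(k+1)(S(k)+1)-(k-1)S(k)-1>S(k)$.
\end{itemize}
So \eqref{sec:wfac1} fails for every $S$ and every $k$.

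Conceptually, $x\mapsto x\otimes e_1$ makes $E$ complemented in the nuclear space $G$, so $L^G(E,F)=L(E,F)$. The theorem combined with Proposition~\ref{sec:cor1} would then make $(\Lambda_\infty(\alpha),\Lambda_1(\beta))$ a tame pair, and that fails for stable $\beta$. No argument can close your Step~2 without an extra hypothesis coupling $E$ with $F$, or $A$ with $B$.
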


\begin{proof}
Let us assume that $G=\lambda(A)\hat{\otimes}_\pi\lambda(B)$ is isomorphic to a K\"othe space $\lambda(C)$ given by the Köthe matrix in Equation \ref{TM1}. 
	
By Theorem 2.3 in \cite{pisek2}, there exists an increasing function $S_{1}:\nc\to \nc$, tending to infinity, such that for any other increasing
function $\phi:\nc \to \nc$ tending to infinity we have:
\begin{equation*}
\begin{split}
\exists k_1\quad \forall k\geq k_1\quad & \exists t_k\in \nc, \, A_k>0 \quad \forall \nu\in \nc, z\in F :\\ &
 \frac{\|z\|_k}{b_{\nu, S_1(k)}}\leq A_k\max_{1\leq q\leq t_k}\Bigl\{\frac{\|z\|_q}{b_{\nu,\phi(q)}}\Bigr\}.
\end{split}
\end{equation*}
	By Theorem 2.2 in \cite{pisek2}, there exists an increasing function $S_{2}:\nc\to \nc$, tending to infinity, such that for any other increasing
function $\phi:\nc \to \nc$ tending to infinity we have:
\begin{equation*}
\begin{split}
\exists k_2\quad \forall n\geq k_2\quad &\exists p_n\in \nc, \, B_n>0 \quad \forall \mu\in \nc, u'\in E': \\ & \|u\|^*_{S_2(n)}a_{\mu,n}\leq B_n\max_{1\leq q\leq p_n}\Bigl\{\|u'\|^*_{\phi(q)}a_{\mu,q}\Bigr\}.
\end{split}
\end{equation*}
	Let $\phi:\nc \to \nc$ be an increasing function and let $k_0=\max\{k_1,k_2,\phi(1)\}$.  For any $k\geq k_0$, without loss of generality, we can take $t_k\geq S_1(k)\geq k_0$. Then the second inequality is true for $n=S_1(k)$, and similarly we can take $p_k(=p_{S_1(k)})\geq k_2$. Moreover, for $q_k=\max\{t_k,p_k\}$, we have:
	\begin{gather*}
		\frac{1}{a_{\mu,S_1(k)}}\leq \frac{1}{a_{\mu,\phi(1)}}\leq \max_{1\leq q\leq q_k}\Bigl\{\frac{1}{a_{\mu,\phi(q)}}\Bigr\}\quad \textrm{and}\quad 
		b_{\nu,S_1(k)}\leq b_{\nu,q_k}\leq \max_{1 \leq q\leq q_k}\Bigl\{b_{\nu,q}\Bigr\}.
	\end{gather*}
	Combining these and letting $S=S_2\circ S_1$, given $\phi$, when $k\geq k_0$, $C_k=A_kB_k$ we have:
	\begin{align*}
		\|u'\|^*_{S(k)}\|z\|_k&=\frac{\|z\|_k}{b_{\nu,S_1(k)}}\frac{b_{\nu, S_1(k)}}{a_{\mu,S_1(k)}}\|u\|^*_{S_2(S_1(k))}a_{\mu,S_1(k)}\\
		&\leq C_k\max_{1\leq q\leq q_k}\Bigl\{\frac{\|z\|_q}{a_{\mu,\phi(q)}b_{\nu, \phi(q)}}\Bigr\}\max_{1\leq q\leq q_k}\Bigl\{\|u'\|^*_{\phi(q)}a_{\mu ,q}b_{\nu,q}\Bigr\},
	\end{align*} 
	which is the desired inequality.
\end{proof}

\begin{theorem}\label{TFT}
	Let $G$ be isomorphic to the space $\lambda(B)\otimes_\pi\lambda(C)$. If the pairs $(\lambda(A),\lambda(B))$ and $(\lambda(C),\lambda(D))$ are tame then $(\lambda(A),G,\lambda(D))$ has tame factorization property. 
\end{theorem}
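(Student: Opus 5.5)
Since $\lambda(B)\hat{\otimes}_\pi\lambda(C)$ is isomorphic to the K\"othe space $\lambda(M)$ with $m_{\nu,p}=b_{\mu,p}c_{\eta,p}$ for $\nu=(\mu,\eta)\in\nc^2$, the plan is to apply Theorem~\ref{sec:factor1} with $\lambda^{m_1}=\lambda^{m_2}=\lambda^{m_3}=\lambda$. Reindexing to avoid a clash with the names of the matrices, it suffices to find a nondecreasing $S$ such that for every nondecreasing $\phi$ there are $k_0$, and for $k\ge k_0$ constants $q_k,C_k$, with
\[
\frac{d_{j,k}}{a_{i,S(k)}}\leq C_k\max_{1\le q\le q_k}\frac{b_{\mu,q}c_{\eta,q}}{a_{i,\phi(q)}}\max_{1\le q\le q_k}\frac{d_{j,q}}{b_{\mu,\phi(q)}c_{\eta,\phi(q)}}
\]
for all $i,j,\mu,\eta$. (Isomorphism invariance of $\mathfrak{TF}$ under changing $G$ by an isomorphism is clear from the definition, since $L^G(E,F)$ does not change.)

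The two tameness hypotheses are fed in through Theorem~\ref{sec:factor1} in the degenerate cases $G=F$ and $E=G$ (equivalently Piszczek's characterization, as noted in the remark after Theorem~\ref{sec:thm2}). For $(\lambda(A),\lambda(B))$ tame I will use the form: there is $S_2$ such that for every $\phi$, for $n\ge k_2$,
\[
\frac{b_{\mu,n}}{a_{i,S_2(n)}}\le B_n\max_{1\le q\le p_n}\frac{b_{\mu,q}}{a_{i,\phi(q)}}.
\]
For $(\lambda(C),\lambda(D))$ tame I will use: there is $S_1$ such that for every $\phi$, for $k\ge k_1$,
\[
\frac{d_{j,k}}{c_{\eta,S_1(k)}}\le A_k\max_{1\le q\le t_k}\frac{d_{j,q}}{c_{\eta,\phi(q)}}.
\]
Both forms are exactly what Theorem~\ref{sec:factor1} gives when one takes the middle index equal to an end space, or what Piszczek's Theorems 2.2/2.3 give with $u'=e_i'$ and $z=e_j$, as in the proof of Theorem~\ref{sec:tensor}.

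Then, mimicking the proof of Theorem~\ref{sec:tensor}, I set $S=S_2\circ S_1$ and split
\[
\frac{d_{j,k}}{a_{i,S(k)}}=\frac{d_{j,k}}{c_{\eta,S_1(k)}b_{\mu,S_1(k)}}\cdot\frac{b_{\mu,S_1(k)}c_{\eta,S_1(k)}}{a_{i,S_2(S_1(k))}}.
\]
The first factor is bounded using the $(C,D)$ estimate. The extra $1/b_{\mu,S_1(k)}$ is absorbed by observing $b_{\mu,\phi(q)}\le b_{\mu,\phi(q_k)}\le b_{\mu,S_1(k)}$ after enlarging $q_k$ appropriately, or more simply by inserting $b_{\mu,\phi(q)}$ into the maximum. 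The second factor is bounded using the $(A,B)$ estimate at $n=S_1(k)$, with the factor $c_{\eta,S_1(k)}$ absorbed into $c_{\eta,q}$ by taking $q_k\ge S_1(k)$.

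The main obstacle is exactly this absorption of the cross factors. One has to check that $1/b_{\mu,S_1(k)}\le 1/b_{\mu,\phi(q)}$ for the relevant $q$; this needs $\phi(q)\le S_1(k)$, which fails for large $q$. In the proof of Theorem~\ref{sec:tensor} this was handled by monotonicity in the opposite direction.

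My first attempt at a fix is to use the freedom in the choice of $\phi$ fed to the hypotheses. I would apply the $(C,D)$ estimate not with $\phi$ but with $\phi$ itself, and bound $1/b_{\mu,S_1(k)}\le 1/b_{\mu,\phi(1)}\le\max_q 1/b_{\mu,\phi(q)}$. This uses $S_1(k)\ge\phi(1)$, which is guaranteed by choosing $k_0\ge\phi(1)$ and $S_1(k)\ge k$. Symmetrically, $c_{\eta,S_1(k)}\le c_{\eta,q_k}$ once $q_k\ge S_1(k)$.

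The two maxima produced are $\max_q d_{j,q}/c_{\eta,\phi(q)}\cdot\max_q 1/b_{\mu,\phi(q)}$, which does not directly equal $\max_q d_{j,q}/(b_{\mu,\phi(q)}c_{\eta,\phi(q)})$. To resolve this I would exploit that all these quantities are monotone in $q$ and pass to a common index, possibly replacing $\phi$ by a larger nondecreasing function before applying the hypotheses. If that fails, I would fall back on the rank-one characterization, Proposition~\ref{sec:cor1} (the space $\lambda(B)\hat\otimes_\pi\lambda(C)$ is nuclear when needed), combined with Theorem~\ref{sec:tensor}.
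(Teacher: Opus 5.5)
Your reduction to the matrix condition of Theorem~\ref{sec:factor1} is fine, and you found exactly the right obstruction. But the proposal stops there, and neither of your suggested repairs can work.

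After the split you obtain the factors $\frac{1}{b_{\mu,\phi(1)}}\max_{q}\frac{d_{j,q}}{c_{\eta,\phi(q)}}$ and $c_{\eta,q_k}\max_{q}\frac{b_{\mu,q}}{a_{i,\phi(q)}}$. Since $b_{\mu,\phi(1)}\le b_{\mu,\phi(q)}$ and $c_{\eta,q}\le c_{\eta,q_k}$, each of these \emph{dominates} the corresponding target factor $\max_q\frac{d_{j,q}}{b_{\mu,\phi(q)}c_{\eta,\phi(q)}}$ or $\max_q\frac{b_{\mu,q}c_{\eta,q}}{a_{i,\phi(q)}}$. The inequality therefore runs the wrong way.

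Your fallback, Theorem~\ref{sec:tensor} followed by the K\"othe characterization, is precisely the paper's two-line proof. It does not rescue you, because the last display in the proof of Theorem~\ref{sec:tensor} performs exactly this merge. It bounds $\frac{1}{a_{\mu,\phi(1)}}\max_q\frac{\|z\|_q}{b_{\nu,\phi(q)}}$ by $\max_q\frac{\|z\|_q}{a_{\mu,\phi(q)}b_{\nu,\phi(q)}}$, and $b_{\nu,q_k}\max_q\|u'\|^*_{\phi(q)}a_{\mu,q}$ by $\max_q\|u'\|^*_{\phi(q)}a_{\mu,q}b_{\nu,q}$. So it has the same gap.

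Enlarging $\phi$ cannot help either. For fixed $\mu$ the factors $b_{\mu,\cdot}$ contribute only constants, so the target implies the condition of Theorem~\ref{sec:factor1} for the triple $(\lambda(A),\lambda(C),\lambda(D))$. That condition couples $A$ with $C$, while your hypotheses never relate $A$ to $C$.

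In fact the statement is false as written, so no argument can close the gap. Take $b_{\mu,p}\equiv 1$ (so $\lambda(B)=\ell^1$), $\lambda(A)=\Lambda_\infty(\alpha)$ and $\lambda(C)=\lambda(D)=\Lambda_1(\alpha)$ with $\alpha_n=n$.

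\begin{itemize}
\item Every operator into $\ell^1$ has constant characteristic, so $\pi_T(k)\le k$ for $k\ge \pi_T(1)$ and $(\lambda(A),\lambda(B))$ is tame.
\item $(\Lambda_1(\alpha),\Lambda_1(\alpha))$ is tame by Vogt's theorem quoted in the paper.
\item The maps $y\mapsto\sum_\eta y_\eta e_{(1,\eta)}$ and $x\mapsto (x_{(1,\eta)})_\eta$ exhibit $\lambda(C)$ as a complemented subspace of $G$. Hence every operator $\lambda(A)\to\lambda(D)$ factors through $G$, and $(\lambda(A),G,\lambda(D))\in\mathfrak{TF}$ would force $(\Lambda_\infty(\alpha),\Lambda_1(\alpha))\in\mathfrak{T}$.
\end{itemize}

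That last conclusion fails. Choose disjoint infinite sets $N_j$ and integers $\lambda_j$. Put $Te_n=e^{\lambda_j n/j}e_{\lambda_j n}$ for $n\in N_j$ and $Te_n=0$ otherwise. By Lemma~\ref{LE1}(1),
\[
\|T\|_{k,r}=\sup_{j}\sup_{n\in N_j}e^{n(\lambda_j/j-\lambda_j/k-r)},
\]
which is finite if and only if $r\ge\max_{j<k}\lambda_j(\frac1j-\frac1k)$. So $T$ is continuous, but $\pi_T(k)\ge\lambda_{k-1}/(k(k-1))$, and this exceeds any prescribed $S(k)$ for suitable $\lambda_j$. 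Relabelled, the same example refutes Theorem~\ref{sec:tensor}.

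The underlying reason is that, unlike boundedness, tameness of $R$ does not make $RQ$ uniformly tame for arbitrary $Q$. Both tensor factors sit complementably in $G$, so a correct version needs hypotheses linking $\lambda(A)$ with $\lambda(C)$ and $\lambda(B)$ with $\lambda(D)$.
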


\begin{proof}
	Let $(\lambda(A),\lambda(B))$ and $(\lambda(C),\lambda(D))$ be tame pairs. Then by Theorem \ref{sec:tensor} Equation \ref{sec:wfac1} is satisfied. Then by Theorem \ref{sec:factor1} $(\lambda(A),G,\lambda(D))$ has tame factorization property.
\end{proof}

\section{Quasi-diagonal Operators}

A quasi-diagonal operator $T:\lambda(A)\to \lambda(B)$ is an operator satisfying 
$$Te_{n}=t_{n}e_{\sigma(n)}$$ 
for every $n\in \nc$, where $(t_{n})_{n\in \nc}$ is a sequence of scalars and $\sigma:\nc\to \nc$ is an injective map. 

Dragilev \cite{drag} and Nurlu \cite{nurlu}\cite{nurlu2} proved that the existence of a continuous unbounded linear operator between nuclear Köthe spaces implies the existence of a continuous unbounded quasi-diagonal operator between them. Djakov and Ramanujan \cite{djakov2} later sharpened these results by removing the nuclearity assumption. Building on these results, Bahadır showed in \cite[Theorem 3.1]{can} that the pair 
$(\lambda(A),\lambda(B))$ is tame if and only if every continuous quasi-diagonal operator from $\lambda(A)$ to $\lambda(B)$ is tame. Moreover, Terzioğlu et al. in \cite{terzi1} proved that if there exists an unbounded continuous linear operator $T:\lambda(A)\to\lambda(C)$ that factors through $\lambda(B)$, then there also exists an unbounded continuous quasi-diagonal operator $D:\lambda(A)\to\lambda(C)$ which factors through $\lambda(B)$ as a product of two continuous quasi-diagonal operators. We now want to expand this result to tame factorization property. 

We denote by $\mathcal{D}(A,B)$ the set of all continuous quasi-diagonal operators from $\lambda(A)$ to $\lambda(B)$, and by $\mathcal{D}_\sigma(A,B)$ the space of all continuous quasidiagonal operators from $\lambda(A)$ to $\lambda(B)$ determined by the  map $\sigma$. Clearly, $\mathcal{D}_\sigma(A,B)$ is a subspace of $L(\lambda(A),\lambda(B))$, whereas $\mathcal{D}(A,B)$ is not, and we have $$\mathcal{D}_\sigma(A,B)\subseteq \mathcal{D}(A,B)\subseteq L(\lambda(A),\lambda(B)).$$ 

	Given any $\phi:\nc\to\nc$, and an injective map $\sigma$, we define:
\begin{gather*}
	\mathcal{D}_\sigma^\phi(A,B):=\mathcal{D}_\sigma(A,B)\cap L_\phi(\lambda(A),\lambda(B)). 
\end{gather*} 
Since we are working with K\"othe spaces, $\mathcal{D}_\sigma^\phi(A,B)$ is a closed subspace of $L_\phi(\lambda(A),\lambda(B))$. 

\begin{pro}\label{PD1}
	Let $\sigma,\tau:\nc\to\nc$ be injective maps and $S:\nc\to \nc$ be a nondecreasing map. If $RQ$ is $S$-tame for each $Q\in \mathcal{D}_\sigma(A,B)$, $R\in \mathcal{D}_\tau(B,C)$, then for each $\phi:\nc\to\nc$  we have 
    \begin{equation}\label{sec:qdineq1}
\begin{split}
\exists k_0\in\mathbb{N} \quad &\forall k\ge k_0,\quad \exists n_k\in\mathbb{N} ~\\	&\frac{c_{\tau\circ\sigma(i), k}}{a_{i,S(k)}}\leq n_k\max_{1\leq q\leq n_k}\Biggl\lbrace \frac{b_{\sigma(i) ,q}}{a_{i,\phi (q)}}\Biggr\rbrace \max_{1\leq q\leq n_k}\Biggl\lbrace \frac{c_{\tau\circ\sigma(i),q}}{b_{\sigma(i),\phi (q)}}\Biggr\rbrace
\end{split}
\end{equation}
for every $i\in \nc$.
\end{pro}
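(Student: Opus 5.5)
We will mimic the proof of the "only if" direction of Theorem~\ref{sec:thm2}, with the LF-space argument run on the closed subspaces of quasi-diagonal operators instead of the full operator spaces. Fix $\phi:\nc\to\nc$, which by the remark following Theorem~\ref{sec:thm2} we may take nondecreasing. The hypothesis says that every product $RQ$ with $Q\in\mathcal{D}_\sigma(A,B)$, $R\in\mathcal{D}_\tau(B,C)$ lies in $\bigcup_n L_{S,n}(\lambda(A),\lambda(C))$, which is an LF-space. We consider the bilinear map
\[
\theta:\mathcal{D}^\phi_\sigma(A,B)\times\mathcal{D}^\phi_\tau(B,C)\to \bigcup_{n\in\nc}L_{S,n}(\lambda(A),\lambda(C)),\qquad \theta(Q,R)=RQ .
\]
Both factors are Fr\'echet spaces, being closed subspaces of $L_\phi$. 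Moreover, $\theta$ is sequentially separately closed: convergence in $L_\phi$ or in $L_{S,n}$ implies coordinatewise convergence of the matrix entries, and the product of quasi-diagonal operators is quasi-diagonal with entries $t_i s_{\sigma(i)}$.

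By Lemma~\ref{BFL} there is a $k_0$ such that $\theta$ maps continuously into $L_{S,k_0}$. As in the proof of Theorem~\ref{sec:thm2}, for every $k\ge k_0$ there exist $q_k$ and $C_k>0$ with
\[
\|RQ\|_{k,S(k)}\le C_k\max_{1\le q\le q_k}\|R\|_{q,\phi(q)}\max_{1\le q\le q_k}\|Q\|_{q,\phi(q)}
\]
for all $Q\in\mathcal{D}^\phi_\sigma(A,B)$ and $R\in\mathcal{D}^\phi_\tau(B,C)$.

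Next we test this inequality on the rank-one quasi-diagonal operators $Q=e_i'\otimes e_{\sigma(i)}$ and $R=e_{\sigma(i)}'\otimes e_{\tau(\sigma(i))}$. These are quasi-diagonal for $\sigma$ and $\tau$ respectively: take the sequence $t_n=\delta_{n,i}$ for $Q$, and for $R$ the sequence that equals $1$ at the index $\sigma(i)$ and $0$ elsewhere. They lie in $L_\phi$ for every $\phi$, and $RQ=e_i'\otimes e_{\tau\sigma(i)}$. By Lemma~\ref{LE1}(2),
\[
\|RQ\|_{k,S(k)}=\frac{c_{\tau\sigma(i),k}}{a_{i,S(k)}},\qquad
\|Q\|_{q,\phi(q)}=\frac{b_{\sigma(i),q}}{a_{i,\phi(q)}},\qquad
\|R\|_{q,\phi(q)}=\frac{c_{\tau\sigma(i),q}}{b_{\sigma(i),\phi(q)}} .
\]
Substituting gives \eqref{sec:qdineq1} with $n_k:=\max\{q_k,\lceil C_k\rceil\}$. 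This is legitimate because the right-hand maxima only grow when $q_k$ is enlarged to $n_k$ (more terms are included), and $C_k\le n_k$.

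The main obstacle is the verification that $\theta$ is sequentially separately closed into the LF-space with values in the right topology. If $Q_m\to Q$ in $\mathcal{D}^\phi_\sigma$ and $RQ_m\to T$ in some $L_{S,n}$, we must check $T=RQ$. We will argue via coordinates: both convergences imply $e_j'(\cdot\, e_l)$ convergence for every $j,l$, and $R$ is continuous, so $RQ_m e_l\to RQe_l$ in $\lambda(C)$. A further point to note is that $\theta$ is well defined into the LF-space only because the hypothesis gives $S$-tameness of each product. The remaining steps are direct computations.
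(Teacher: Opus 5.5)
Your proof is correct and follows the paper's route. You apply the bilinear factorization lemma to $\theta(Q,R)=RQ$ on $\mathcal{D}^\phi_\sigma(A,B)\times\mathcal{D}^\phi_\tau(B,C)$ and then evaluate the resulting uniform estimate on rank-one quasi-diagonal operators. Your choice of unit coefficients, $Q=e_i'\otimes e_{\sigma(i)}$ and $R=e_{\sigma(i)}'\otimes e_{\tau\sigma(i)}$, is slightly cleaner than the paper's truncations $Q_i$, $R_{\sigma(i)}$ of a given pair $Q,R$. It avoids cancelling the factor $|q_i\,r_{\sigma(i)}|$ and the paper's side discussion of the case $Q\notin\mathcal{D}^\phi_\sigma(A,B)$.
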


We note that here $k_0$ and $n_k$ depend not just on $\phi$, but also on the choice of $\sigma$ and $\tau$. 

\begin{proof}
For the proof, we will follow the methods used in \cite[Proposition 3]{terzi1}.
Let $\sigma,\tau$ be fixed injective maps, and take $Q\in\mathcal{D}_\sigma(A,B)$, $R\in\mathcal{D}_\tau(B,C)$ so that $RQ$ assumed to be $S$-tame for some nondecreasing map $S:\nc\to\nc$. Let $\phi:\nc\to\nc$ be an arbitrary function.
	
Since $Q\in\mathcal{D}_\sigma(A,B)$ and $R\in\mathcal{D}_\tau(B,C)$, there exist sequences $(q_i)_{i\in \mathbb{N}}$ and $(r_{i})_{i\in \nc}$ satisfying  $$Qe_i:=q_ie_{\sigma(i)}\qquad \text{and} \qquad Re_i:=r_ie_{\tau(i)}$$
for all $i\in \nc$.

 For each $i,j\in\nc$, we define operators $Q_i:\lambda(A)\to\lambda(B)$ and $R_j:\lambda(B)\to\lambda(C)$ by
\[
Q_ix:=x_iQe_i, \qquad R_jy:=y_jRe_j,
\]
for every $x=(x_i)_{i\in\nc}\in\lambda(A)$ and $y=(y_i)_{i\in\nc}\in\lambda(B)$. Clearly $Q_i\in\mathcal{D}_\sigma(A,B)$ and $R_j\in\mathcal{D}_\tau(B,C)$ for all $i,j\in\nc$.

We further define $T_i:=R_{\sigma(i)}Q_i$ for each $i\in\nc$, so that
\[
T_ix = x_i\,q_i\,r_{\sigma(i)}\,e_{\tau(\sigma(i))}
\]
for every $x=(x_i)_{i\in\nc}\in\lambda(A)$ and each $T_i$ is $S$-tame. It follows that
\begin{equation*} 
\begin{split}
& \|Q_i\|_{q,\phi(q)}=|q_i|\frac{b_{\sigma(i),q}}{a_{i,\phi(q)}},\\ &\|R_j\|_{q,\phi(q)}=|r_j|\frac{c_{\tau(j),q}}{b_{j,\phi(q)}},\\ & \|T_i\|_{k,S(k)}=|q_i||r_{\sigma(i)}|\frac{c_{\tau\circ\sigma(i),k}}{a_{i,S(k)}}.
\end{split}
\end{equation*}

We first note that if $Q\notin\mathcal{D}_\sigma^\phi(A,B)$ or $R\notin\mathcal{D}_\tau^\phi(B,C)$, then the same holds for all $Q_i$, respectively all $R_j$. This means that for a given $q\in\nc$, either $\|Q_i\|_{q,\phi(q)}=\infty$ for all $i\in\nc$, or $\|R_j\|_{q,\phi(q)}=\infty$ for all $j\in\nc$. In either case, the right-hand side of Inequality \ref{sec:qdineq1} equals $\infty$, so the inequality holds trivially.
	
So we can assume that $Q\in \mathcal{D}_\sigma^\phi(A,B)$ and $R\in \mathcal{D}_\tau^\phi(A,B)$. The map $\theta$ defined by 
\[\theta: \mathcal{D}_\sigma^\phi(A,B)\times \mathcal{D}_\tau^\phi(B,C)\to \bigcup_{n\in\nc}L_{S,n}(\lambda(A),\lambda(B),\quad \theta(Q,R)=RQ\]
is sequentially separately closed. We apply Lemma \ref{BFL} \cite[Lemma~2.1]{terzi2} to get that there is some $k_{0}\in \nc$ such that 
\[ \theta: \mathcal{D}_\sigma^\phi(A,B)\times \mathcal{D}_\tau^\phi(B,C)\to L_{S,k_{0}}(\lambda(A),\lambda(B)\]
is continuous. Since \[T_i=R_{\sigma(i)}Q_i=\theta(Q_i\times R_{\sigma(i)}),\quad Q_i\times R_{\sigma(i)}\in \mathcal{D}_\sigma^\phi(A,B)\times \mathcal{D}_\tau^\phi(B,C)\] for each $i\in\nc$, the continuity of $\theta: \mathcal{D}_\sigma^\phi(A,B)\times \mathcal{D}_\tau^\phi(B,C)\to L_{S,k_{0}}(\lambda(A),\lambda(B)$ yields that for every $ k\geq k_0$ there exists $n_k\in \nc$ such that 
 \begin{align*}
	\|T_{i}\|_{k,S(k)} &\leq n_k\max_{1\leq q\leq n_k}\|R_{\sigma(i)}\|_{q,\phi(q)}\max_{1\leq q\leq n_k}\|Q_{i}\|_{q,\phi(q)}
\end{align*} 
 and 
\[\frac{c_{\tau\circ\sigma(i), k}}{a_{i,S(k)}}\leq n_k\max_{1\leq q\leq n_k}\Biggl\lbrace \frac{b_{\sigma(i) ,q}}{a_{i,\phi (q)}}\Biggr\rbrace \max_{1\leq q\leq n_k}\Biggl\lbrace \frac{c_{\tau\circ\sigma(i),q}}{b_{\sigma(i),\phi (q)}}\Biggr\rbrace\]
for every $i\in \nc$. This gives the desired result. 
\end{proof}

\begin{observation}\label{sec:kothecor}
If $(\lambda(A),\lambda(B),\lambda(C))\notin\mathfrak{TF}$ then, by Theorem \ref{sec:factor1}, for any nondecreasing function $S:\mathbb{N}\to\mathbb{N}$ there exists a nondecreasing function $\phi:\mathbb{N}\to\mathbb{N}$ such that
\begin{equation}\label{sec:kothe}
\begin{split}
\forall k\in\mathbb{N} \quad &\exists k_{0}\in\nc,\ k_{0}\geq k \quad \forall n\in\mathbb{N} \\
&\frac{c_{j_n,k_0}}{a_{i_n,S(k_0)}} > n \max_{1\le q\le n}\frac{b_{\nu_n, q}}{a_{i_n,\phi(q)}}\max_{1\le q\le n}\frac{c_{j_n,q}}{b_{\nu_n,\phi(q)}}
\end{split}
\end{equation}
holds for some $i_n(k_0),j_n(k_0),\nu_n(k_0)\in\mathbb{N}$.
\end{observation}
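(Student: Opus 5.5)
The Observation is a formal negation of the characterization in Theorem~\ref{sec:factor1}, specialized to $\lambda^{m_1}=\lambda^{m_2}=\lambda^{m_3}=\lambda$. The plan is to negate its quantifiers carefully and then absorb the constant $C_k$ and the index bound $q_k$ into one integer $n$.

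Assume $(\lambda(A),\lambda(B),\lambda(C))\notin\mathfrak{TF}$. By Theorem~\ref{sec:factor1}, no nondecreasing $S$ satisfies (\ref{sec:kothe}) for all nondecreasing $\phi$. So fix an arbitrary nondecreasing $S$. There is a nondecreasing $\phi$ for which the statement
\[
\exists k_0\ \forall k\ge k_0\ \exists q_k, C_k\ \forall i,j,\nu:\ \frac{c_{j,k}}{a_{i,S(k)}}\le C_k\max_{1\le q\le q_k}\frac{b_{\nu,q}}{a_{i,\phi(q)}}\max_{1\le q\le q_k}\frac{c_{j,q}}{b_{\nu,\phi(q)}}
\]
fails. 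Negating it gives the following: for every $k$ there is some $k_0\ge k$ such that for every $q'\in\nc$ and every $C>0$ there exist $i,j,\nu$ violating the inequality with $q_k=q'$ and $C_k=C$.

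Next, for such $k_0$ and each $n\in\nc$, take $q'=n$ and $C=n$. This yields indices $i_n(k_0),j_n(k_0),\nu_n(k_0)$ with
\[
\frac{c_{j_n,k_0}}{a_{i_n,S(k_0)}}>n\max_{1\le q\le n}\frac{b_{\nu_n,q}}{a_{i_n,\phi(q)}}\max_{1\le q\le n}\frac{c_{j_n,q}}{b_{\nu_n,\phi(q)}},
\]
which is exactly the displayed inequality of the Observation. Choosing $q'$ and $C$ separately and letting $n$ dominate both would also work, because the maxima are monotone in $q'$; setting both equal to $n$ is already enough.

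The only point that needs care is the quantifier structure. The outer ``$\exists k_0\ \forall k\ge k_0$'' negates to ``$\forall k\ \exists k_0\ge k$''. Also, in Theorem~\ref{sec:factor1} the witness $k_0$ is taken after $\phi$ is fixed, so $\phi$ must be chosen before $k$. No estimate is involved, so no real obstacle is expected.
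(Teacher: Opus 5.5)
Your proposal is correct and matches the paper, which gives no separate argument and obtains the Observation directly as the formal negation of the sufficiency direction of Theorem~\ref{sec:factor1}. The points you spell out are exactly what that negation requires: $\phi$ is fixed before $k$, the prefix $\exists k_0\,\forall k\ge k_0$ becomes $\forall k\,\exists k_0\ge k$, and $q_k$ and $C_k$ are both set equal to $n$.
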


\begin{lemma}
Let $S,\phi:\mathbb{N}\to\mathbb{N}$ be nondecreasing functions. Suppose that 
\begin{equation}
\begin{split}
\forall k\in\mathbb{N} \quad &\exists k_{0}\in \nc, k_{0}\geq k \quad \forall n\in\mathbb{N} ~\\	& \frac{c_{j_n k_0}}{a_{i_nS(k_0)}} > n \max_{1 \le q \le n} \frac{b_{\nu_n q}}{a_{i_n\phi(q)}} \max_{1 \le q \le n} \frac{c_{j_n q}}{b_{\nu_n \phi(q)}}
\end{split}
\end{equation}
holds for some $i_{n}(k), j_{n}(k),\nu_{n}(k)\in\mathbb{N}$.
Then, for each fixed $k$, the sequences $(i_n(k))_{n\in\nc}$, $(j_n(k))_{n\in\nc}$, $(\nu_n(k))_{n\in\nc}$ diverge to $\infty$ as $n\to\infty$.
\end{lemma}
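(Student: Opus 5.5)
The plan is to argue by contradiction, separately for each of the three index sequences. Fix $k$ and the corresponding $k_0\ge k$. If one of the sequences $(i_n)$, $(j_n)$, $(\nu_n)$ does not tend to $\infty$, it takes some value infinitely often. So we can pass to a subsequence $n_l\to\infty$ along which that index is constant. The hypothesis must still hold along $n_l$. Since the right-hand side carries the factor $n$, it suffices in each case to find a lower bound for the product of the two maxima which does not depend on $n$ and is comparable to the left-hand side.

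Two preliminary remarks fix conventions. First, the left-hand side must be finite and the inequality is strict. Hence $a_{i_n,S(k_0)}>0$ and $c_{j_n,k_0}>0$, and every factor appearing on the right is finite. Second, each max only grows with $n$. So for $n$ beyond a fixed threshold we may evaluate it at any particular index $q$ we like.

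The three cases are as follows.

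\emph{Case $i_n\equiv i$.} Take $q=k_0$ in the second max and $q=\phi(k_0)$ in the first max; this is allowed once $n\ge\phi(k_0)$. The factor $b_{\nu_n,\phi(k_0)}$ cancels and $c_{j_n,k_0}$ cancels against the left-hand side. This gives $a_{i,\phi(\phi(k_0))}>n\,a_{i,S(k_0)}$ with $a_{i,S(k_0)}>0$, which is impossible as $n\to\infty$.

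\emph{Case $j_n\equiv j$.} Choose $q^*$ with $c_{j,q^*}>0$. Take $q=q^*$ in the second max and $q=\phi(q^*)$ in the first max. This yields
\[
\frac{c_{j,k_0}}{a_{i_n,S(k_0)}}>n\,\frac{c_{j,q^*}}{a_{i_n,\phi(\phi(q^*))}} .
\]
Provided $S(k_0)\ge\phi(\phi(q^*))$, monotonicity of the K\"othe matrix in the second index gives $a_{i_n,\phi(\phi(q^*))}\le a_{i_n,S(k_0)}$. Hence $c_{j,k_0}>n\,c_{j,q^*}$, a contradiction.

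\emph{Case $\nu_n\equiv\nu$.} Choose $q^*$ with $b_{\nu,q^*}>0$. Take $q=q^*$ in the first max and $q=k_0$ in the second. This gives
\[
\frac{a_{i_n,\phi(q^*)}\,b_{\nu,\phi(k_0)}}{a_{i_n,S(k_0)}\,b_{\nu,q^*}}>n .
\]
If $\phi(q^*)\le S(k_0)$, the ratio of the $a$'s is at most $1$. Then $b_{\nu,\phi(k_0)}/b_{\nu,q^*}>n$ for all large $n$, which is absurd.

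The main obstacle is the comparison conditions $S(k_0)\ge\phi(\phi(q^*))$ and $S(k_0)\ge\phi(q^*)$ used in the $j$ and $\nu$ cases. The $i$ case needs no such condition. These comparisons are not automatic for small $k$. I would handle them by first reducing to $S$ unbounded: replacing $S$ by a larger nondecreasing function only strengthens the hypothesis, since $a_{i,S(k)}$ increases. Since $k_0\ge k$ is arbitrary, the comparison then holds once $k$ is large. If the statement is really needed for every $k$, I would instead exploit the freedom in the choice of $q^*$ (any $q^*$ with positive entry works), or the remark that one may enlarge $S$, so that the cancellation needed in each case goes through.
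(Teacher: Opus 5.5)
\textbf{The gap.} Your case $i_n\equiv i$ is correct. The cases $j_n\equiv j$ and $\nu_n\equiv\nu$, however, depend on the comparisons $S(k_0)\ge\phi(\phi(q^*))$ and $S(k_0)\ge\phi(q^*)$. The hypothesis does not supply these, and the repair you propose goes the wrong way.

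Replacing $S$ by a larger $S'$ increases $a_{i,S'(k_0)}$ and so decreases the left-hand side. The inequality for $S'$ is therefore a \emph{stronger} assertion than the one you are given. It does not follow from it, so you cannot ``reduce'' to it. Varying $q^*$ does not help either: since $\phi$ is nondecreasing, the least $q^*$ with $c_{j,q^*}>0$ (resp.\ $b_{\nu,q^*}>0$) is already the best choice. Finally, even for unbounded $S$, an argument that works only for large $k$ would not give the conclusion for each fixed $k$, which is what the statement claims.

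\textbf{The gap cannot be closed, because the statement is false without such a comparison.} Take $a_{i,q}=(2i)^q$, $b_{\nu,q}=c_{j,q}=1$, $S\equiv 1$, $\phi(q)=q+1$, $k_0=k$, and $i_n=n$, $j_n=\nu_n=1$. Then
\[
\frac{c_{1,k_0}}{a_{n,S(k_0)}}=\frac{1}{2n}>\frac{1}{4n}=n\max_{1\le q\le n}\frac{b_{1,q}}{a_{n,\phi(q)}}\,\max_{1\le q\le n}\frac{c_{1,q}}{b_{1,\phi(q)}},
\]
so the hypothesis holds for every $k$, while $(j_n)$ and $(\nu_n)$ are constant.

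\textbf{What is true.} The lemma holds with an added hypothesis such as the following:
\begin{enumerate}
\item the matrix entries are positive; and
\item $S(k_0)\ge\phi(\phi(1))$.
\end{enumerate}
Positivity is also needed for your preliminary claim that every factor on the right is finite; with zero entries a product $0\cdot\infty$ can occur. Under these assumptions take $q^*=1$; since $\phi(\phi(1))\ge\phi(1)$, your three cancellations then go through verbatim.

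\textbf{Comparison with the paper.} The paper's one-line proof invokes Lemma 4 of Terzio\u{g}lu--Yurdakul--Zahariuta with $r=S(k_0)$ and $q=k_0$, and it tacitly needs the same comparison. In the bounded-factorization setting of that lemma, the index $r$ is universally quantified in the negated condition and can be taken as large as desired. Here it is pinned to $S(k_0)$, so that freedom is lost.
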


\begin{proof}
This follows essentially from Lemma 4 in \cite{terzi1}, taking $r=S(k_0)$ and $q=k_0$.
\end{proof}

\begin{pro}\label{sec:nontame}
	If $(\lambda(A),\lambda(B),\lambda(C))\notin \mathfrak{TF}$ then for any given $S:\nc\to \nc$ there are bijections $\sigma$ and $\tau$ and operators $Q\in \mathcal{D}_\sigma(A,B)$, $R\in \mathcal{D}_\tau(A,B)$ such that the operator $T=RQ$ is not $S$-tame. 
\end{pro}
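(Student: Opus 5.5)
Fix a nondecreasing $S:\nc\to\nc$ and, using Observation~\ref{sec:kothecor}, a nondecreasing $\phi$ and, for each $k$, a $k_0(k)\ge k$ together with index sequences $i_n,j_n,\nu_n$ satisfying \eqref{sec:kothe}. By the preceding lemma, for each fixed $k$ all three sequences tend to $\infty$ as $n\to\infty$. The plan is to take $\phi$ slightly larger than needed so that the estimate survives passing to an $S$-independent subsequence. We then build \emph{one} pair of quasi-diagonal operators $Q\in\mathcal{D}_\sigma(A,B)$, $R\in\mathcal{D}_\tau(B,C)$ with $\sigma(i_m)=\nu_m$, $\tau(\nu_m)=j_m$ along a carefully chosen diagonal sequence of indices. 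The aim is that $RQ$ violates \eqref{sec:qdineq1} for this $\phi$ and every $k_0$. By Proposition~\ref{PD1}, applied contrapositively, some $R'Q'$ with $Q'\in\mathcal{D}_\sigma$, $R'\in\mathcal{D}_\tau$ is then not $S$-tame, which is exactly the claim.

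\textbf{Step 1 (diagonal selection).} Enumerate the values $k=1,2,\dots$, each infinitely often, by a sequence $(k^{(m)})_m$. Inductively choose $n_m$ so large that $i_{n_m}(k^{(m)})$, $\nu_{n_m}(k^{(m)})$ and $j_{n_m}(k^{(m)})$ are strictly larger than all indices chosen before; this is possible because these sequences diverge. Write $i_m,\nu_m,j_m$ for the chosen indices. Since the sequences are strictly increasing, $i_m\mapsto\nu_m$ and $\nu_m\mapsto j_m$ are injective. Extend them to bijections $\sigma,\tau$ of $\nc$ by matching the complementary index sets, which are infinite if we skip indices. Off the chosen sets the coefficients are set to zero.

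\textbf{Step 2 (continuity).} Set $Qe_{i_m}=\alpha_m e_{\nu_m}$ and $Re_{\nu_m}=\beta_m e_{j_m}$, with the natural weights
\[
\alpha_m=\Bigl(m\max_{1\le q\le n_m}\frac{b_{\nu_m,q}}{a_{i_m,\phi(q)}}\Bigr)^{-1},\qquad
\beta_m=\Bigl(\max_{1\le q\le n_m}\frac{c_{j_m,q}}{b_{\nu_m,\phi(q)}}\Bigr)^{-1}.
\]
By Lemma~\ref{LE1}(2), $\|Q\|_{q,\phi(q)}=\sup_m\alpha_m b_{\nu_m,q}/a_{i_m,\phi(q)}\le 1$ as soon as $n_m\ge q$, and finitely many $m$ give finite values. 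The same bound holds for $R$. Hence $Q\in\mathcal{D}_\sigma^\phi$ and $R\in\mathcal{D}_\tau^\phi$, and in particular both are continuous.

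\textbf{Step 3 (non-tameness).} For any $k$, choose $m$ with $k^{(m)}=k$, so that $k_0=k_0(k^{(m)})\ge k$. Then
\[
\|RQ\|_{k_0,S(k_0)}\ge \alpha_m\beta_m\frac{c_{j_m,k_0}}{a_{i_m,S(k_0)}}>\frac{n_m}{m}.
\]
We arrange $n_m\ge m^2$ in Step 1, so this lower bound tends to $\infty$ along the infinitely many such $m$. Hence $\|RQ\|_{k_0,S(k_0)}=\infty$ for some $k_0\ge k$, for every $k$. So $RQ$ is not $S$-tame, and Proposition~\ref{PD1} is in fact not even needed.

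The main obstacle is Step 3: the indices $i_n,j_n,\nu_n$ depend on $k_0$, while a single operator must defeat every $k_0$. The diagonal enumeration in Step 1 is designed to handle this. The points to verify carefully are that the strict-increase choices keep $\sigma$ and $\tau$ injective, and that the $\sup$ in Lemma~\ref{LE1} is controlled uniformly across the different $k$-blocks, which it is because $n_m\ge q$ eventually, regardless of the block.
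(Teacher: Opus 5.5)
Your argument is correct, and it takes a genuinely different route from the paper's.

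The paper only constructs the bijections. It extracts the sets $U_k$, splits each into infinitely many infinite pieces $U_{k,\mu}$, and draws one element from each piece to get disjoint sets $V_k$ with infinite complements $I_0,J_0,N_0$. It then defines $\sigma,\tau$ so that, for every $k$, the reversed inequality of Observation~\ref{sec:kothecor} holds along $i=i_n$, $n\in V_k$. Finally it concludes by the contrapositive of Proposition~\ref{PD1}. That proposition rests on the bilinear factorization Lemma~\ref{BFL}, so the non-$S$-tame product is obtained by a Baire-category argument and is never written down.

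You instead diagonalize over $k$: each $k$ is enumerated infinitely often, and all index sequences are chosen strictly increasing across blocks. You then prescribe the weights $\alpha_m,\beta_m$ and verify $Q\in\mathcal{D}^\phi_\sigma(A,B)$, $R\in\mathcal{D}^\phi_\tau(B,C)$ via Lemma~\ref{LE1}(2). The bound $\|RQ\|_{k_0,S(k_0)}\ge \alpha_m\beta_m c_{j_m,k_0}/a_{i_m,S(k_0)}>n_m/m$ then follows directly from Lemma~\ref{LE1}(1).

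Your route buys three things:
\begin{itemize}
\item explicit operators;
\item no appeal to Proposition~\ref{PD1} or Lemma~\ref{BFL};
\item a transparent treatment of two points the paper leaves implicit. One is injectivity of $\sigma,\tau$ across different $k$-blocks: the paper's choice of the $t_\mu(k)$ separates the labels $n$, not the index values. The other is the orientation $\sigma(i_m)=\nu_m$, $\tau(\nu_m)=j_m$ that the inequality actually requires; the paper's displayed formulas for $\sigma,\tau$ interchange the roles of $j$ and $\nu$.
\end{itemize}
The paper's route buys a lighter analytic part: once Proposition~\ref{PD1} is available, no weights have to be tracked.

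Some minor remarks:
\begin{itemize}
\item Your opening paragraph announces a contrapositive use of Proposition~\ref{PD1} and an enlargement of $\phi$. Neither is used in the final argument, so delete them.
\item The factor $m$ in $\alpha_m$ and the requirement $n_m\ge m^2$ are superfluous. Without the factor $m$, and with any choice keeping $n_m\to\infty$, the same computation gives $\|RQ\|_{k_0,S(k_0)}>n_m$ while $\|Q\|_{q,\phi(q)}$ stays finite.
\item Like the paper, you depend on the divergence lemma to make the strictly increasing choices. Your diagonalization only needs it for $k$ beyond some threshold, since that is all non-tameness requires, so nothing is lost by enumerating only such $k$.
\end{itemize}
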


\begin{proof} Let us assume that  $(\lambda(A),\lambda(B),\lambda(C))\notin\mathfrak{TF}$. Fix $S:\nc\to\nc$. Observation \ref{sec:kothecor} guarantees the existence of a nondecreasing function $\phi:\nc\to\nc$, together with, for each $n\in\nc$, indices $i_n,j_n,\nu_n\in\nc$, for which \eqref{sec:kothe} holds. Following the technique of \cite[Proposition 5]{terzi1}, we construct, for each $k\in \nc$, a sequence $U_k=\lbrace n_m(k)\rbrace$ such that each coordinate of $(i_{n_m}(k),j_{n_m}(k),\nu_{n_m}(k))$ takes different values for different $m$. For each $k\in \nc$, we represent infinite set $U_{k}$ as a disjoint union of infinite subsets
$$U_k=\bigcap_{\mu=0}^\infty U_{k,\mu}.$$

Now we construct a new sequence of infinite disjoint sets inductively. First form $V_{k_0}$ by taking one element $t_\mu(k_0)$ from each $U_{k_0,\mu}$, $\mu \in \nc$. We construct $V_{k+1}$ by taking one element from each $U_{k+1,\mu}$, different from $t_\mu(s)$ for $k_0\leq s\leq k$. Then each of the sets 
$$I_0:= \nc \setminus \bigcup_{k=k_0}^\infty I_{V_k}, \quad J_0:= \nc \setminus \bigcup_{k=k_0}^\infty J_{V_k}, \quad N_0:= \nc \setminus \bigcup_{k=k_0}^\infty N_{V_k}$$
is infinite. 
	
Let $\eta:J_0\to N_0$ and $\zeta:N_0\to I_0$ be arbitrary bijections. We define the maps $\sigma$ and $\tau$ by:
	\begin{equation*}
		\sigma(i) := \left(
		\begin{array}{cc}
			\eta(i) & \textrm{if }i\in J_0 \\
			j_{t_\mu(k)} & \textrm{if }i=i_{t_\mu(k)}\in J_{V_k},\,k\geq k_0, 
		\end{array} \right)
	\end{equation*}
	
	\begin{equation*}
		\tau(j) := \left(
		\begin{array}{cc}
			\zeta(j) & \textrm{if }j\in N_0 \\
			\nu_{t_\mu(k)} & \textrm{if }j=j_{t_\mu(k)}\in N_{V_k},\,k\geq k_0.
		\end{array} \right)
	\end{equation*}
Then for each $k$ we have:
	
	$$\frac{c_{\tau(\sigma(i)),k}}{a_{i,S(k)}}> n \max_{1\leq q\leq n}\frac{b_{\sigma(i),q}}{a_{i,\phi(q)}}\cdot \max_{1\leq q\leq n}\frac{c_{\tau(\sigma(i)),q}}{b_{\sigma(i),\phi(q)}}$$
	for every $i=i_n$ with $n\in V_k$. Proposition \ref{PD1} gives us the desired result.
\end{proof}

\begin{theorem}
	$(\lambda (A),\lambda(B),\lambda(C))\in \mathfrak{TF}$ with associated function $S:\nc\to\nc$ if and only for each $Q\in \mathcal{D}(A,B)$ and $R\in \mathcal{D}(B,C)$, the operator $T=RQ$ is $S$-tame. 
\end{theorem}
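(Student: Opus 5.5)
The forward direction is immediate from the definitions. Suppose $(\lambda(A),\lambda(B),\lambda(C))\in\mathfrak{TF}$ with associated function $S$. Take $Q\in\mathcal{D}(A,B)$ and $R\in\mathcal{D}(B,C)$. These are continuous operators, so $T=RQ\in L^{\lambda(B)}(\lambda(A),\lambda(C))$. By the definition of the tame factorization property there is some $k_0$ with $\pi_T(k)\le S(k)$ for all $k\ge k_0$. Hence $T$ is $S$-tame.

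For the converse I will argue by contraposition, using Proposition~\ref{sec:nontame}. Assume every product $RQ$ with $Q\in\mathcal{D}(A,B)$ and $R\in\mathcal{D}(B,C)$ is $S$-tame, and suppose $(\lambda(A),\lambda(B),\lambda(C))\notin\mathfrak{TF}$.

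1. By Observation~\ref{sec:kothecor}, applied to this fixed $S$, there is a nondecreasing $\phi$ and index triples $(i_n,j_n,\nu_n)$ for which \eqref{sec:kothe} fails uniformly in $n$, for infinitely many $k_0$.
2. Proposition~\ref{sec:nontame} then builds injections $\sigma,\tau$ from these triples so that the reversed inequality holds along $i=i_n$, $n\in V_k$, for each $k$.
3. Since $\sigma$ and $\tau$ are fixed, the hypothesis says $RQ$ is $S$-tame for \emph{all} $Q\in\mathcal{D}_\sigma(A,B)$ and $R\in\mathcal{D}_\tau(B,C)$. So Proposition~\ref{PD1} applies and gives $k_0$ and constants $n_k$ such that \eqref{sec:qdineq1} holds for every $i$.
4. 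Choose $k\ge k_0$ among the indices supplied by step 1, and choose $n\in V_k$ with $n\ge n_k$, which is possible because $V_k$ is infinite. The maxima over $q\le n$ dominate those over $q\le n_k$. Therefore the strict reversed inequality at $i=i_n$, with constant $n\ge n_k$, contradicts \eqref{sec:qdineq1}.

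This gives $(\lambda(A),\lambda(B),\lambda(C))\in\mathfrak{TF}$, and the associated function is the same $S$, because $S$ was fixed at the start of step 1.

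The main obstacle is the combinatorial bookkeeping in step 4. The index $k$ in Observation~\ref{sec:kothecor} is only produced as some $k_0\ge k$, and Proposition~\ref{PD1} needs the bad inequality at one index $k$ beyond its own threshold, with $n$ arbitrarily large along the diagonal sets $V_k$. I would first check that the construction in Proposition~\ref{sec:nontame} keeps, for every sufficiently large admissible $k$, infinitely many $n$ with $\sigma(i_n)=\nu_n$ and $\tau(\nu_n)=j_n$. This uses the divergence lemma, so that the triples can be taken with pairwise distinct coordinates. I also need $\sigma$ and $\tau$ to be well-defined injections.

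Finally, the contradiction needs actual operators $Q\in\mathcal{D}_\sigma$ and $R\in\mathcal{D}_\tau$ to which the hypothesis applies. The natural candidates are those with all weights equal to $1$. If these fail to be continuous, I would fall back on the weighted operators $Q_i$, $R_j$ from the proof of Proposition~\ref{PD1}, which have finitely many nonzero entries and so lie in every $\mathcal{D}^\phi$. That fallback also removes any dependence on a global continuity assumption.
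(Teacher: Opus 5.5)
Your proposal is correct and follows the paper's argument. The forward direction is immediate from the definition, and the converse is the contrapositive of Proposition~\ref{sec:nontame}, which you unpack into Observation~\ref{sec:kothecor}, the diagonal construction of injections with $\sigma(i_n)=\nu_n$, $\tau(\nu_n)=j_n$, and Proposition~\ref{PD1}; your fallback to the rank-one operators in $\mathcal{D}_\sigma^\phi(A,B)$ and $\mathcal{D}_\tau^\phi(B,C)$ is exactly what makes the division in that proof legitimate.

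One small slip, which the paper's two-line proof shares: refuting $(\lambda(A),\lambda(B),\lambda(C))\notin\mathfrak{TF}$ only yields membership in $\mathfrak{TF}$ for some function, not necessarily for $S$. To get the same $S$, start instead from ``some $T\in L^{\lambda(B)}(\lambda(A),\lambda(C))$ is not $S$-tame'' and use that Theorems~\ref{sec:thm2} and~\ref{sec:factor1} hold for each fixed $S$, since their proofs never change $S$. This gives the conclusion of Observation~\ref{sec:kothecor} for this particular $S$, and the rest of your argument then runs unchanged.
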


\begin{proof}
Clearly, if the triple has the tame factorization property, then there exists $S:\nc\to\nc$ such that every $T=RQ$ with $Q\in\mathcal{D}(A,B)$, $R\in\mathcal{D}(B,C)$ is $S$-tame. Conversely, if $(\lambda(A),\lambda(B),\lambda(C))\notin\mathfrak{TF}$, then by Proposition \ref{sec:nontame} there exist bijections $\sigma,\tau$ and operators $Q\in\mathcal{D}_\sigma(A,B)$, $R\in\mathcal{D}_\tau(B,C)$ such that $T=RQ$ is not $S$-tame. This completes the proof.
\end{proof}

\section{Applications to Power Series Spaces}

In this section, we specialize the results established in the preceding sections and \cite{can} to triples of power series spaces. Since power series spaces constitute the most concrete and widely studied class of K\"othe spaces, the general characterizations of $\mathfrak{TF}$ and $\mathfrak{BF}$ established above lead directly to several explicit results for these triples. We collect these consequences below, all of which follow from short applications of preceding sections.

Let us consider a triple $(E,G,F)$, where $E=\Lambda_{1}(\alpha)$ and $F=\Lambda_t(\beta)$ are power series spaces, $t\in\{1,\infty\}$. Since the pair $(\Lambda_1(\alpha),\Lambda_t(\beta))$ is tame for all sequences $\alpha,\beta$, \cite{vogt3,Z2} it follows that $(\Lambda_1(\alpha),G,\Lambda_t(\beta))\in\mathfrak{TF}$ for every Fr\'echet space $G$ and every $t\in\{1,\infty\}$.

A sequence $\beta$ is called \emph{stable} if $$\displaystyle \sup_{j\in\mathbb{N}} \frac{\beta_{j+1}}{\beta_j}<\infty,$$ and \emph{non-stable} otherwise. The tameness of pairs of power series spaces in each of these cases was completely determined by Bahad\i r. We directly adopt these results, summarized in Table~1 of \cite{can}, in what follows.

\begin{pro}\label{prop:powerseries}
Let $\alpha,\beta$ be increasing sequences, with $\alpha$ non-stable, and let $t\in\{1,\infty\}$.
\begin{enumerate}
\item[(i)] If $\beta$ is also non-stable, then $(\Lambda_\infty(\alpha),G,\Lambda_t(\beta))\in\mathfrak{TF}$ for every Fr\'echet space $G$.
\item[(ii)] If $\beta$ is stable, and $G=\Lambda_1(\gamma)$ or $G=\Lambda_r(\gamma)\hat{\otimes}_\pi\Lambda_1(\delta)$ for some $r\in\{1,\infty\}$, where $\gamma$ is non-stable and $\delta$ is an arbitrary sequence, then $(\Lambda_\infty(\alpha),G,\Lambda_t(\beta))\in\mathfrak{TF}$.
\end{enumerate}
\end{pro}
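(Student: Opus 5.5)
The plan is to reduce both parts to Remark~\ref{Rem1}, using the classification of tame pairs of power series spaces from Table~1 of \cite{can}. No new estimate on operator seminorms should be needed; the work is in locating the right tame pairs.

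For (i), item~(3) of Remark~\ref{Rem1} says it is enough to show that the pair $(\Lambda_\infty(\alpha),\Lambda_t(\beta))$ is tame. I would read this off the classification in \cite{can}. When both $\alpha$ and $\beta$ are non-stable, every pair $(\Lambda_\infty(\alpha),\Lambda_t(\beta))$ is tame, for $t=1$ and for $t=\infty$. The heuristic is that for non-stable exponent sequences the characteristic of continuity is controlled by a fixed function, e.g. of the form $S(k)=2k$ up to a shift. This gives $(\Lambda_\infty(\alpha),G,\Lambda_t(\beta))\in\mathfrak{TF}$ for every Fr\'echet space $G$, with no condition on $G$.

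For (ii) the pair $(\Lambda_\infty(\alpha),\Lambda_t(\beta))$ need not be tame when $\beta$ is stable, so I would use item~(1) of Remark~\ref{Rem1} instead. That requires both $(\Lambda_\infty(\alpha),G)$ and $(G,\Lambda_t(\beta))$ to be tame.

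\emph{Case $G=\Lambda_1(\gamma)$.}
\begin{itemize}
\item The pair $(\Lambda_1(\gamma),\Lambda_t(\beta))$ is tame for all sequences, by \cite{vogt3,Z2}, as recalled just before the proposition.
\item For $(\Lambda_\infty(\alpha),\Lambda_1(\gamma))$ I would again use Table~1 of \cite{can}: with $\alpha$ and $\gamma$ both non-stable, the pair is tame.
\end{itemize}

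\emph{Case $G=\Lambda_r(\gamma)\hat{\otimes}_\pi\Lambda_1(\delta)$.} I would not compute tameness of pairs involving this tensor product directly. Instead I would apply Theorem~\ref{TFT} with
\[
\lambda(A)=\Lambda_\infty(\alpha),\quad \lambda(B)=\Lambda_r(\gamma),\quad \lambda(C)=\Lambda_1(\delta),\quad \lambda(D)=\Lambda_t(\beta).
\]
Its hypotheses are that $(\Lambda_\infty(\alpha),\Lambda_r(\gamma))$ and $(\Lambda_1(\delta),\Lambda_t(\beta))$ are tame.
\begin{itemize}
\item The second pair is tame for arbitrary $\delta$ by \cite{vogt3,Z2}.
\item The first pair is tame by Table~1 of \cite{can}, since $\alpha$ and $\gamma$ are non-stable. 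For $r=1$ this is the same fact used in the previous case; for $r=\infty$ it is part~(i) with $\gamma$ in place of $\beta$.
\end{itemize}

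The main obstacle is entirely bookkeeping. I need to check that the cited entries of Table~1 of \cite{can} really cover every combination used: $t$ and $r$ ranging over $\{1,\infty\}$, with non-stable $\alpha$ and non-stable $\gamma$ (or $\beta$). I also need to confirm that Theorem~\ref{TFT}, which is stated for $\lambda$-type K\"othe spaces, applies to power series spaces as written; it does, since power series spaces are K\"othe spaces $\lambda(A)$ with $a_{j,k}=e^{k\alpha_j}$ or $a_{j,k}=e^{-\alpha_j/k}$. If some finite-to-infinite entry of Table~1 turned out to require an extra hypothesis, I would fall back on Theorem~\ref{sec:factor1}. That would mean verifying the matrix inequality there directly with $S(k)=2k$, splitting according to whether the index ratios are large or small, which is exactly where non-stability is used.
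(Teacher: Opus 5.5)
Your proposal is correct and follows the paper's proof. Part (i) comes from Remark~\ref{Rem1}(3) once $(\Lambda_\infty(\alpha),\Lambda_t(\beta))$ is known to be tame, which the paper takes from Theorem~4.3 of \cite{can}. Part (ii) uses Remark~\ref{Rem1}(1) for $G=\Lambda_1(\gamma)$, with Theorem~4.5 and Remark~4.6 of \cite{can} for $(\Lambda_\infty(\alpha),\Lambda_1(\gamma))$, and the tensor-product result for $G=\Lambda_r(\gamma)\hat{\otimes}_\pi\Lambda_1(\delta)$, with the same two tame pairs you identified.

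The only difference is that you invoke Theorem~\ref{TFT} where the paper cites Theorem~\ref{sec:tensor}. Yours is the more complete reference: Theorem~\ref{sec:tensor} by itself only gives inequality \eqref{sec:wfac1}, and getting to $\mathfrak{TF}$ for a tensor product that need not be nuclear goes through Theorem~\ref{sec:factor1}, which is exactly what Theorem~\ref{TFT} combines.
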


\begin{proof}
Let $\alpha$ be non-stable.
\begin{itemize}
\item[(i)] When $\beta$ is non-stable, $(\Lambda_\infty(\alpha),\Lambda_t(\beta))$ is tame by Theorem 4.3 of \cite{can}, so the result is trivial by Remark~\ref{Rem1}(3).
\item[(ii)] Suppose $\beta$ is stable. If $G=\Lambda_1(\gamma)$, then $(\Lambda_\infty(\alpha),\Lambda_1(\gamma))$ is tame since $\alpha$ and $\gamma$ are both non-stable by Theorem 4.5 and Remark 4.6 of \cite{can}, and $(\Lambda_1(\gamma),\Lambda_t(\beta))$ is tame for every $t\in\{1,\infty\}$ regardless of the stability of $\gamma$ and $\beta$; hence $(\Lambda_\infty(\alpha),\Lambda_1(\gamma),\Lambda_t(\beta))\in\mathfrak{TF}$ by Remark~\ref{Rem1}(1). If $G=\Lambda_r(\gamma)\hat\otimes_\pi\Lambda_1(\delta)$, the same reasoning shows that $(\Lambda_\infty(\alpha),\Lambda_r(\gamma))$ and $(\Lambda_1(\delta),\Lambda_t(\beta))$ are both tame, so the conclusion follows from Theorem~\ref{sec:tensor}.
\end{itemize}
\end{proof}

\begin{pro}\label{thm:tensorpowerseries}
Let $\alpha$ and $\beta$ be stable sequences.  If $\lambda(B)$ has $(LB_\infty)$ and $\lambda(C)$ has $(LB^\infty)$, 
$(\Lambda_\infty(\alpha),\lambda(B)\hat{\otimes}_\pi\lambda(C),\Lambda_1(\beta))\in \mathfrak{TF}$.
\end{pro}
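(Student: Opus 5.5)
We will reduce the statement to Remark~\ref{Rem1}(1) together with the tensor product mechanism of Theorem~\ref{sec:tensor}, exactly as in the proof of Proposition~\ref{prop:powerseries}(ii). The key input is the classical characterization of tameness (Piszczek, \cite{pisek2}, and the stable-case results in \cite{can}) of pairs where one space is a power series space: for a stable sequence $\alpha$, the pair $(\Lambda_\infty(\alpha),\lambda(B))$ is tame as soon as $\lambda(B)$ has the property $(LB_\infty)$. Dually, for stable $\beta$, the pair $(\lambda(C),\Lambda_1(\beta))$ is tame as soon as $\lambda(C)$ has $(LB^\infty)$. These are the tame analogues of Vogt's $\mathfrak B$-characterizations via $DN$--$\Omega$ type invariants, and stability is exactly what makes the bounded statements upgrade to tame ones with a linear $S$. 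So the first step is to quote, or reprove in the K\"othe-matrix form of \cite[Theorems 2.2, 2.3]{pisek2}, these two tameness statements.

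Second, once $(\Lambda_\infty(\alpha),\lambda(B))\in\mathfrak T$ and $(\lambda(C),\Lambda_1(\beta))\in\mathfrak T$, we apply Theorem~\ref{sec:tensor} with $E=\Lambda_\infty(\alpha)$, $\lambda(A)$ replaced by $\lambda(B)$, $\lambda(B)$ replaced by $\lambda(C)$, and $F=\Lambda_1(\beta)$. It yields that the triple satisfies condition \eqref{sec:wfac1} for the K\"othe space $G\cong\lambda(B)\hat\otimes_\pi\lambda(C)$, whose matrix is given by \eqref{TM1}.

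Third, we pass from \eqref{sec:wfac1} to $\mathfrak{TF}$. Since all three spaces are K\"othe spaces of type $\lambda$, Theorem~\ref{sec:factor1} (equivalently Proposition~\ref{sec:cor2} with $F=\Lambda_1(\beta)$) applies: specializing $u'=e'_i$ and $z=e_j$ in \eqref{sec:wfac1} gives precisely inequality \eqref{sec:kothe}. This is the same step used in Theorem~\ref{TFT}. Alternatively, Proposition~\ref{sec:cor1} can be invoked directly, provided $\lambda(B)\hat\otimes_\pi\lambda(C)$ is nuclear. The route through Proposition~\ref{sec:cor2} avoids the nuclearity hypothesis: condition (3) there is literally \eqref{sec:wfac1} with $u'=e'_i$, and its proof of (3)$\Rightarrow$(1) requires no nuclearity.

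The main obstacle is the first step, namely justifying the tameness of the two pairs from the invariants $(LB_\infty)$ and $(LB^\infty)$ in the stable case with a uniform function $S$. The first thing to try is to write $(LB_\infty)$ in matrix form. For $(\Lambda_\infty(\alpha),\lambda(B))$ we check Piszczek's criterion
$$\frac{b_{\nu,k}}{e^{S(k)\alpha_\mu}}\le A_k\max_{q\le t_k}\frac{b_{\nu,q}}{e^{\phi(q)\alpha_\mu}},$$
choosing $q$ according to the size of $\alpha_\mu$ relative to the $(LB_\infty)$ interpolation. Stability $\alpha_{\mu+1}\le M\alpha_\mu$ lets us absorb the index shift into a linear $S$. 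The $(LB^\infty)$ case for $\Lambda_1(\beta)$ is handled symmetrically.
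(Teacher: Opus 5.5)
Your second and third steps reproduce the proof of Theorem~\ref{TFT}: Theorem~\ref{sec:tensor}, then the K\"othe criterion via Proposition~\ref{sec:cor2}, which indeed needs no nuclearity. The paper's proof also ends by invoking Theorem~\ref{TFT}. The gap is your first step, which is the only part specific to this proposition.

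You need $(\Lambda_\infty(\alpha),\lambda(B))\in\mathfrak{T}$ and $(\lambda(C),\Lambda_1(\beta))\in\mathfrak{T}$. You neither prove these nor point to a specific result asserting them. The parts of \cite{pisek2} used in this paper are only the seminorm criteria (Theorems~2.2 and~2.3), and \cite{can} classifies tame pairs of power series spaces.

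The missing idea is that $(LB_\infty)$ and $(LB^\infty)$ are Vogt's invariants for the relation $\mathfrak{B}$. The paper takes from 2.1~Satz and 3.2~Satz of \cite{vogt2} that, under the stability assumptions, every continuous operator in each of the two pairs is bounded. Boundedness gives tameness immediately and with a uniform function. If $\pi_T\le M_T$, then $\pi_T(k)\le k$ for all $k\ge M_T$, so $S(k)=k$ serves for every $T$. There is nothing to ``upgrade'', since $\mathfrak{B}\subseteq\mathfrak{T}$ always. Stability plays no role in passing from bounded to tame; it is a hypothesis of Vogt's theorems.

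Your fallback of checking Piszczek's criterion directly does not fill the gap.
\begin{itemize}
\item For the first pair it is only an outline, and the role you give to stability is reversed. With domain $\Lambda_\infty(\alpha)$, stability makes the $\alpha_\mu$ occur at every scale, which is the unfavourable case for tameness. In Proposition~\ref{prop:powerseries} it is non-stability that produces tame pairs.
\item For the second pair, ``handled symmetrically'' is not justified. The relevant criterion is the one for $(E,\lambda(A))$ used in the proof of Theorem~\ref{sec:tensor}, now with $E=\lambda(C)$ and weights $e^{-\beta_\mu/q}$. It is therefore a condition on the dual quantities $\|e_\nu'\|^*_q=1/c_{\nu,q}$. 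In the first pair the criterion involves $b_{\nu,q}=\|e_\nu\|_q$ instead.
\item An interpolation property of $\lambda(C)$ therefore does not carry over by symmetry. This is why the paper cites a separate result (3.2~Satz) at this point.
\end{itemize}
You are right that Theorem~\ref{TFT} needs this pair with target $\Lambda_1(\beta)$; the paper's proof writes $\Lambda_\infty(\beta)$ there.
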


\begin{proof}
	Since $\lambda(B)$ has $(LB_\infty)$ and $\lambda(C)$ has $(LB^\infty)$, then  the pairs $(\Lambda_\infty(\alpha),\lambda(B))$ and $(\lambda(C), \Lambda_\infty(\beta))$ is bounded by 2.1 Satz and 3.2 Satz of \cite{vogt2}. Hence, Theorem \ref{TFT} gives us that   $(\Lambda_\infty(\alpha),\lambda(B)\hat{\otimes}_\pi\lambda(C),\Lambda_\infty(\beta))\in \mathfrak{TF}$.
\end{proof}

\begin{rem}
When $\alpha$ and $\beta$ are stable, the pair $(\Lambda_\infty(\beta),\Lambda_1(\alpha))$ need not be tame by Theorem 4.8 of \cite{can}. Nevertheless, Proposition~\ref{thm:tensorpowerseries} shows that operators factoring through a suitably chosen tensor product $G=\lambda(B)\hat{\otimes}_\pi\lambda(C)$ are tame, once again illustrating that $\mathfrak{TF}$ is a genuinely weaker and more flexible condition than pairwise tameness.
\end{rem}

As a further consequence of the tame factorization property, we obtain the following result on the existence of bases, due to Bahad\i r \cite[Theorem 5.1]{can}.

\begin{theorem}\label{thm:basis}
Suppose $(\Lambda_\infty(\alpha),G,\Lambda_\infty(\beta))$ has the tame factorization property. Then the range of any operator $T\in L^G(\Lambda_\infty(\alpha),\Lambda_\infty(\beta))$ has a basis. If, in addition, $\Lambda_\infty(\beta)$ is nuclear, then the range of $T$ has an absolute basis; and if the range of $T$ is also closed, then it is isomorphic to a closed subspace of $s$, the space of rapidly decreasing sequences $\Lambda_\infty\big((\ln n)_{n\in\mathbb{N}}\big)$.
\end{theorem}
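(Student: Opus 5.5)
The plan is to reduce the statement to the corresponding result for tame pairs, \cite[Theorem 5.1]{can}, applied to the single operator $T$. The key observation is that Bahad\i r's basis theorem only uses that the operator in question is $S$-tame for some fixed nondecreasing $S$, and that the family of operators considered is closed under the manipulations used in the proof. So we first fix $T=RQ\in L^G(\Lambda_\infty(\alpha),\Lambda_\infty(\beta))$ with $Q\in L(\Lambda_\infty(\alpha),G)$ and $R\in L(G,\Lambda_\infty(\beta))$. The tame factorization property supplies a nondecreasing $S$, independent of $T$, such that $T$ is $S$-tame: $\pi_T(k)\le S(k)$ for $k\ge k_0$.

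Next we check that the auxiliary operators appearing in the proof of \cite[Theorem 5.1]{can} still factor through $G$. These are compositions of $T$ with the canonical projections and injections of $\Lambda_\infty(\alpha)$ and $\Lambda_\infty(\beta)$, together with finite-rank corrections. The class $L^G(E,F)$ is a two-sided ideal with respect to composition by continuous operators on the outer spaces: if $T=RQ$, then $VTU=(VR)(QU)$. Finite-rank operators also factor through $G$ by the rank-one construction preceding Proposition~\ref{sec:prop1}. Moreover, a sum $T_1+T_2$ of operators in $L^G$ factors through $G\times G$. This is the delicate point, because $G\times G$ need not equal $G$. I would avoid sums as far as possible. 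If they are unavoidable, I would observe that the diagonal operator $\tilde R(y_1,y_2)=R_1y_1+R_2y_2$ is used only together with finite-rank perturbations, and finite-rank perturbations do not change $\pi_T$ beyond a bounded amount. With this, every operator fed into Bahad\i r's argument is $S'$-tame for some $S'$ depending only on $S$, which is the only hypothesis his proof uses.

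Then the conclusions follow in order. First, the range of $T$ has a basis, by the same construction as in \cite[Theorem 5.1]{can}: tameness gives a uniform estimate that allows us to choose, via a diagonal argument on the graded seminorms, a Schauder basis of $T(\Lambda_\infty(\alpha))$ from images of unit vectors. Second, if $\Lambda_\infty(\beta)$ is nuclear, then every basis in a nuclear subspace is absolute by the Dynin--Mityagin theorem, so the range has an absolute basis. Third, if the range is closed, it is a closed subspace of the nuclear power series space $\Lambda_\infty(\beta)$. Every nuclear power series space of infinite type embeds as a closed subspace of $s$, since $s$ is universal for nuclear Fréchet spaces by the Komura--Komura theorem. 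Hence the range is isomorphic to a closed subspace of $s$.

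I expect the main obstacle to be the second step: verifying that the proof of \cite[Theorem 5.1]{can} really uses tameness only of $T$ and of operators obtained from $T$ by compositions on the outer spaces, so that the factorization through $G$ is preserved. If the original argument needs tameness of arbitrary operators in $L(\Lambda_\infty(\alpha),\Lambda_\infty(\beta))$, I would try to replace that step by working directly with the estimate $\|T\|_{k,S(k)}<\infty$ for $k\ge k_0$. That estimate is all that is needed to control the coordinate functionals of the basis built from $T$.
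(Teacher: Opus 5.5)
Your reduction is the one the paper uses. The tame factorization property gives a single nondecreasing $S$, independent of $T$, such that every $T=RQ\in L^G(\Lambda_\infty(\alpha),\Lambda_\infty(\beta))$ is $S$-tame. The paper then invokes \cite[Theorem 5.1]{can} for that operator and gives no further argument. So the point you flag, that Bahad\i r's proof must use tameness only of operators built from $T$, is also taken for granted there.

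Your worry about sums and $G\times G$ is unnecessary. Since $\|T_1+T_2\|_{k,r}\le\|T_1\|_{k,r}+\|T_2\|_{k,r}$, we have $\pi_{T_1+T_2}(k)\le\max\{\pi_{T_1}(k),\pi_{T_2}(k)\}$, so the $S$-tame operators form a linear space. Together with $VTU=(VR)(QU)\in L^G$, this covers outer compositions and linear combinations without ever having to factor a sum through $G$.

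Your derivation of the last assertion contains a false step. The Komura--Komura theorem says that $s^{\mathbb{N}}$, not $s$, is universal for nuclear Fr\'echet spaces. The space $s$ itself is not universal: every subspace of $s$ has a continuous norm, so $\omega=\mathbb{K}^{\mathbb{N}}$ does not embed in $s$.

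The conclusion is still true, but for a different reason, namely Vogt's characterization: a Fr\'echet space is isomorphic to a subspace of $s$ if and only if it is nuclear and has property (DN). Here $\Lambda_\infty(\beta)$ has (DN), because Cauchy--Schwarz gives $\|x\|_k^2\le\|x\|_1\|x\|_{2k-1}$. It is nuclear by hypothesis, and both properties pass to closed subspaces. Hence a closed range of $T$ embeds in $s$; this part uses neither tameness nor the factorization.

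A smaller caveat concerns the absolute basis. The Dynin--Mityagin theorem is about Schauder bases of nuclear Fr\'echet spaces. When the range is not closed, you need the basis to be equicontinuous in the relative topology. Then it extends to a Schauder basis of the closure of the range, and the theorem applies there.
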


\begin{rem}
In particular, Theorem~\ref{thm:basis} applies whenever $G$ is one of the spaces appearing in Proposition~\ref{prop:powerseries}(ii).
\end{rem}

	
	


\section*{Acknowledgments}
This work has been supported by Fatih Sultan Mehmet Vakıf University Research Projects Coordination Unit under grant number 26FSMBC1FB008.

\end{document}